\numberwithin{equation}{section}
\theoremstyle{plain}
\newtheorem{theorem}{Theorem}[section]
\newtheorem{lemma}[theorem]{Lemma}
\newtheorem{proposition}[theorem]{Proposition}
\begin{document}

\title{Existence of prograde double-double orbits in the equal-mass four-body problem}
\author[1]{Wentian Kuang \thanks{kuangwt1234@163.com}}
\author[2]{Duokui Yan \thanks{duokuiyan@buaa.edu.cn} \thanks{The authors are supported by NSFC (No.11432001) and the Fundamental Research Funds of the Central Universities.}}
\affil[1]{Chern Institute of Mathematics, Nankai University, Tianjin 300071, China}
\affil[2]{School of Mathematics and System Sciences, Beihang University, Beijing 100191, China}

\date{}

\maketitle
\begin{abstract}
By introducing simple topological constraints and applying a binary decomposition method, we show the existence of a set of prograde double-double orbits for any rotation angle $\theta \in (0, \pi/7]$ in the equal-mass four-body problem. 

A new geometric argument is introduced to show that for any $\theta \in (0, \pi/2)$, the action of the minimizer corresponding to the prograde double-double orbit is strictly greater than the action of the minimizer corresponding to the retrograde double-double orbit. This geometric argument can also be applied to study orbits in the planar three-body problem, such as the retrograde orbits, the prograde orbits, the Schubart orbit and the H\'{e}non orbit.
\end{abstract}

\section{Introduction}
In 2003, Vanderbei \cite{Van} successfully applied his optimizing program to the N-body problem and found many periodic orbits numerically. In his list, there is an interesting class of orbits in the parallelogram four-body problem with equal-masses, which he named as double-double orbits. Actually, he found two sets of double-double orbits: retrograde double-double orbit and prograde double-double orbit. A double-double orbit is called retrograde if one pair of the two adjacent masses revolves around each other in one direction while their mass center revolves around the mass center of the other pair in a different direction. A double-double orbit is called prograde if both revolutions follow the same direction. Sample pictures of retrograde and prograde double-double orbits are shown in Fig.~\ref{fig1}.
\begin{figure}[!htbp]
    \centering
    \subfigure[ \, Retrograde double-double orbit]{\includegraphics[width=5.6cm]{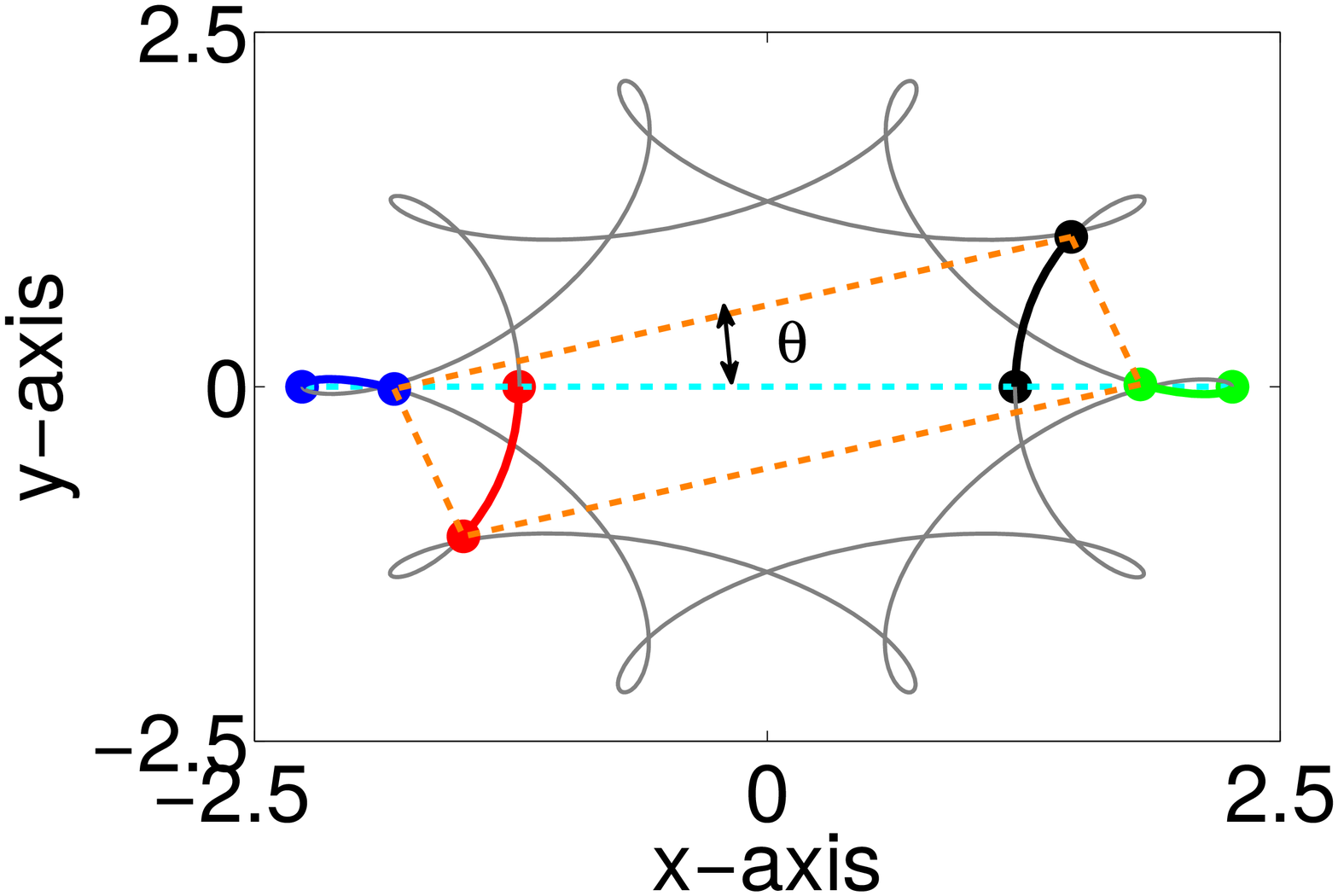}}
   \subfigure[ \, Prograde double-double orbit]{\includegraphics[width=5.6cm]{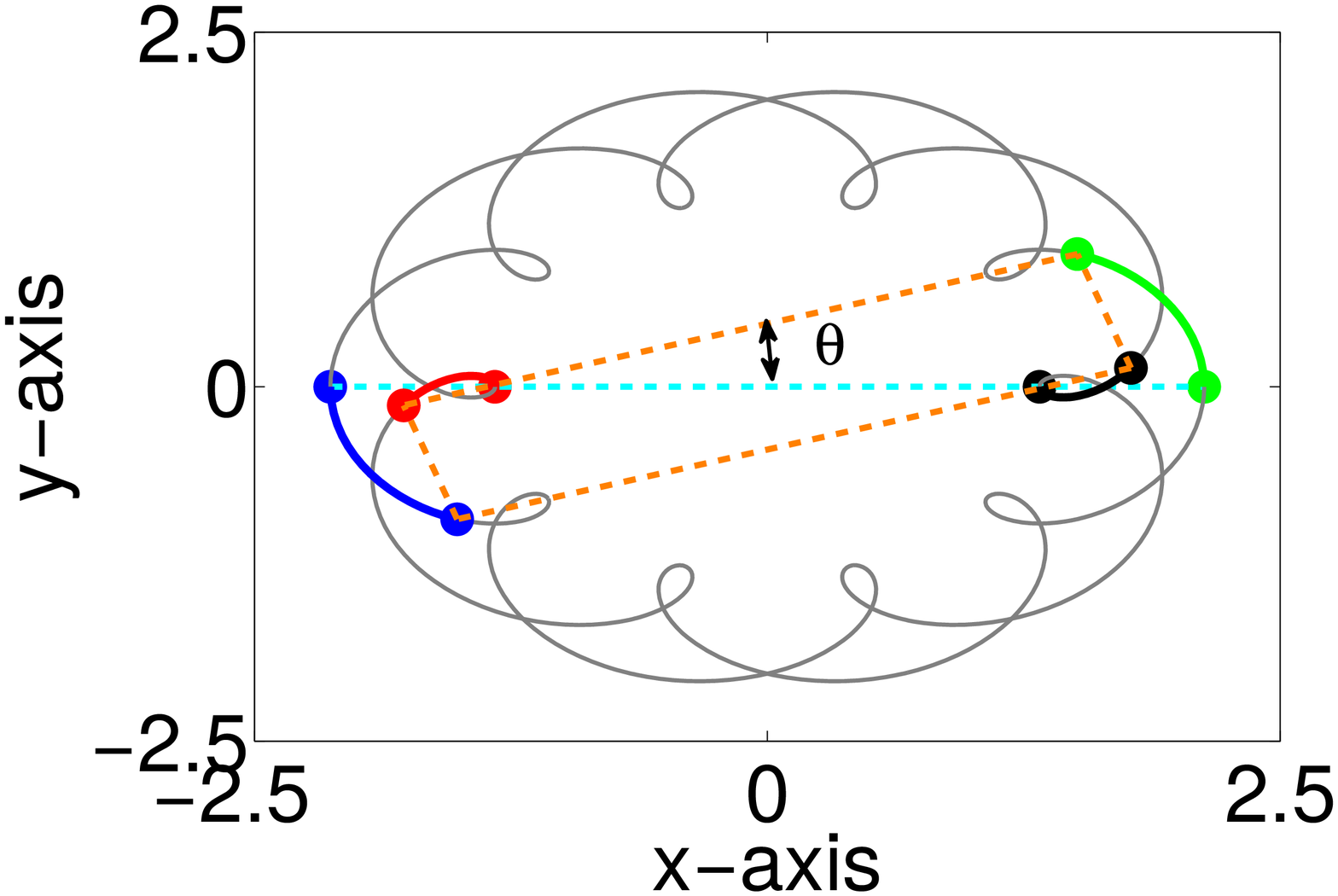}}
 \caption{ \label{fig1} Two sets of double-double orbits with equal masses: the one on the left is called {\bf retrograde double-double orbit}; the one on the right is {\bf prograde double-double orbit}. In each subfigure, $\theta$ is the angle between the collinear configuration at $t=0$ and the rectangular configuration at $t=1$, where the paths connecting the two configurations are highlighted. Both pictures are for $\theta=\pi/10$. }
 \end{figure}

As in Fig.~\ref{fig1}, the highlighted dotted lines in the two subfigures represent the starting configurations of the two orbits. Both starting configurations are collinear: the four bodies lie on the $x$-axis at $t=0$. And at $t=1$, the four bodies form a rectangle in each subfigure (Fig.~\ref{fig1} (a) and (b)). Actually, the two rectangles in Fig.~\ref{fig1} (a) and (b) share a common symmetry axis: a counterclockwise $\theta$ rotation of the $x$-axis. For each given $\theta \in (0, \pi/2)$, it is known that the two sets of double-double orbits exist numerically. Both of the highlighted paths in Fig.~\ref{fig1} (a) and (b) can be found as local action minimizers connecting a collinear configuration on the $x$-axis at $t=0$ and a rectangular configuration at $t=1$ with given symmetry axes. For each given value of $\theta\in (0, \pi/2)$, numerical fact implies that the action $\mathcal{A}=\int_0^1 L \, dt=\int_0^1 (K+U) \, dt$ of the highlighted path in Fig.~\ref{fig1} (b) is strictly greater than the action of the highlighted path in Fig.~\ref{fig1} (a).

The mathematical existence of the double-double orbits has also been studied. In 2003, Chen \cite{CH0, CH1} successfully applied the variational method and showed the existence of the retrograde double-double orbits. Later, Ferrario and Terracini \cite{FT} introduced a general variational argument and showed the existence of many interesting periodic orbits in the N-body problem. The retrograde double-double orbits can be seen as one of their many applications. Actually, they (\cite{CH0, FT}) showed that in the parallelogram equal-mass four-body problem, there exists a set of collision-free action minimizer connecting a collinear configuration and a rectangular configuration. This set coincides with the set of retrograde double-double orbits. However, the existence of the prograde double-double orbits is still open. One of the main challenges is to exclude possible collision singularities of the minimizer under specific topological or symmetry constraints. 

One goal of this paper is to show the existence of prograde double-double orbits for an interval of $\theta$. Instead of using local deformation argument \cite{CA, CM, FT, Yu1}, we introduce simple topological constraints to a two-point free boundary value problem and apply a level estimate argument to exclude possible collisions in the corresponding minimizers. Let the masses be $m_1=m_2=m_3=m_4=1$. Let $q_i= (q_{ix}, \, q_{iy})$ be the coordinate of mass $m_i \, (i=1,2,3,4)$ and let $q=\begin{bmatrix}
q_1 \\
q_2 \\
q_3 \\
q_4
\end{bmatrix}$ be the position matrix. We set $V$ to be the symmetric subspace corresponding to the parallelogram four-body problem:
\begin{equation}\label{symmetricspace}
 V =\left\{ q \in \mathbb{R}_{4 \times 2} \, \big| \, q_1=-q_4, \, \, \, q_2=-q_3 \right\}.
 \end{equation}
 The following topological constraints are introduced. At $t=0$, we define a set
\begin{equation}\label{qstartspace}
V_0 = \left\{Q_s=\begin{bmatrix}
-a_1-a_2 & 0 \\
-a_1  & 0 \\
a_1  & 0 \\
a_1+a_2  &  0
\end{bmatrix} \, \Bigg| \, a_1 \geq 0, \, a_2 \geq 0 \right\},
\end{equation}
which is a collinear configuration on the $x$-axis. In $V_0$, the orders of the four bodies on the $x$-axis satisfy
\[ q_{1x}(0) \leq  q_{2x}(0) \leq q_{3x}(0) \leq q_{4x}(0). \]
For given $\theta \in (0, \pi/2)$, the configuration subset at $t=1$ is defined by $V_1(\theta)$:
\begin{equation}\label{qendspace}
V_1(\theta) = \left\{Q_e=\begin{bmatrix}
-b_1 & -b_2 \\
-b_1  & b_2 \\
b_1  & -b_2 \\
b_1  &  b_2
\end{bmatrix} R(\theta) \,  \Bigg| \, b_1 \geq 0, \, b_2 \geq 0 \right\},
\end{equation}
 where $R(\theta)= \begin{bmatrix}
\cos (\theta) & \sin(\theta) \\
-\sin (\theta) & \cos (\theta)
\end{bmatrix}$ and $\theta \in (0, \pi/2)$. Geometrically, if $Q_e \in V_1(\theta)$, the horizontal rectangle $Q_e \cdot R(-\theta)$ satisfies
\[ q_{1x}(1)=q_{2x}(1)<0, \quad q_{1y}(1) \leq q_{2y}(1), \quad q_3=-q_2, \quad q_4=-q_1.  \]
In other words, the horizontal rectangle $Q_e \cdot R(-\theta)$ satisfies the following constraints: body 4 is in the first quadrant, body 2 is in the second quadrant, body 1 is in the third quadrant and body 3 is in the fourth quadrant. Actually, the definition of the topological constraints in $V_0$ and $V_1(\theta)$ is motivated by the highlighted path in Fig.~\ref{fig1} (b). The orders of the four bodies in $Q_s \in V_0$ and $Q_e \in V_1(\theta)$ coincide with the orders in Fig.~\ref{fig1} (b). 

Let $\Sigma(\theta)$ be the subset in $H^1([0,1],V)$ connecting $V_0$ and $V_1(\theta)$:
\[ \Sigma(\theta) = \left\{q \in H^1([0,1],V) \, \bigg| \,  q(0)\in V_0, \, \, q(1)\in V_1(\theta) \right\}. \]
For any given $\theta \in (0, \pi/2)$, a standard argument shows the existence of an action minimizer $\tilde{q}(t)$ in $\Sigma(\theta)$, such that
\begin{equation}\label{minexist}
\mathcal{A}(\tilde{q}) = \inf_{q \in \Sigma(\theta)} \mathcal{A}(q)= \inf_{q \in \Sigma(\theta)} \int_0^1 (K+U) \, dt,
\end{equation}
where $\displaystyle K= \frac{1}{2}\sum_{i=1}^{4} |\dot{q}_i|^2$ and $\displaystyle U= \sum_{1\leq i<j \leq 4} \frac{1}{|q_i- q_j|}$ with $| \cdot |$ as the standard Euclidean inner product. By the celebrated results of Marchal \cite{Mar} and Chenciner \cite{CA}, we are left to exclude the possible boundary collisions in $\tilde{q}(t) \, (t \in [0,1])$. The topological constraints in our setting are helpful in finding good lower bound of action for paths with boundary collisions. By introducing a binary decomposition method \cite{CH1, CH2, CH3, Zhang} and carefully defining new test paths, we can show that
\begin{lemma}\label{noncollision}
For each given $\theta \in (0, \pi/7]$, the action minimizer $\tilde{q}(t) \, (t \in [0,1])$ is collision-free.
\end{lemma}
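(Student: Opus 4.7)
The plan is to argue by contradiction. Marchal's averaging lemma \cite{Mar} and Chenciner's extension \cite{CA} already rule out all interior collisions of an action minimizer, so it suffices to exclude the possible \emph{boundary} collisions at $t=0$ and $t=1$. Using the parameterizations \eqref{qstartspace}--\eqref{qendspace} together with the parallelogram symmetry defining $V$, the boundary collisions fall into a short finite list: at $t=0$, either $a_1=0$ (bodies $2$ and $3$ meet at the origin), $a_2=0$ (the pairs $\{1,2\}$ and $\{3,4\}$ undergo simultaneous binary collisions because $q_1=-q_4$ and $q_2=-q_3$), or both; at $t=1$ the analogous three cases occur, with $b_1=0$ forcing a simultaneous collision of the pairs $\{1,3\}$ and $\{2,4\}$ along the rotated axis and $b_2=0$ a simultaneous collision of $\{1,2\}$ and $\{3,4\}$.

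For each collision type I will use the same three-step scheme. First (binary lower bound), I apply the binary decomposition method of \cite{CH1, CH2, CH3, Zhang} to split the Lagrangian along two well-chosen one-dimensional directions and bound each pairwise contribution below by the action of a one-dimensional Kepler collision-ejection problem on an interval $[0,\tau]$ or $[1-\tau,1]$. Because the Kepler collision-ejection action is explicit, this yields an explicit $\theta$-independent lower bound $\ell_*$ on $\mathcal{A}(\tilde q)$. Second (test path), I exhibit an explicit collision-free $q^{*}\in\Sigma(\theta)$ — for instance the linear homotopy in $V$ between an optimally chosen $Q_s^{*}\in V_0$ and $Q_e^{*}\in V_1(\theta)$, with the free parameters $a_1,a_2,b_1,b_2$ tuned so as to minimize the resulting action — and compute an explicit upper bound $\mathcal{A}(q^{*};\theta)$ as a function of $\theta$. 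Third (comparison), I verify the strict inequality $\mathcal{A}(q^{*};\theta) < \ell_*$ throughout $\theta\in(0,\pi/7]$; together with the minimizing property $\mathcal{A}(\tilde q)\le \mathcal{A}(q^{*};\theta)$ this contradicts the assumption that $\tilde q$ has a collision of the given type.

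The main obstacle is the first step in the two ``double binary'' cases $a_2=0$ at $t=0$ and $b_2=0$ at $t=1$. There the parallelogram symmetry rigidly couples the four bodies into two binaries that collide simultaneously, and the direct Kepler lower bound is too weak to beat the test-path action. To strengthen $\ell_*$ I plan to exploit the topological constraints built into $V_0$ and $V_1(\theta)$ — the $x$-axis alignment at $t=0$, and the prescribed quadrant for each body when the rectangle in $V_1(\theta)$ is rotated back by $R(-\theta)$ — to force any admissible collision path to carry an extra, quantifiable amount of kinetic energy in the direction transverse to the collision axis, yielding a sharper $\ell_*$. Quantifying this transverse contribution is the technical heart of the proof, and it is the step that ultimately pins down the threshold $\theta=\pi/7$ in the final comparison.
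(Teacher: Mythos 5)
Your high-level scheme (Marchal--Chenciner to reduce to boundary collisions, a binary-decomposition lower bound for collision paths, an explicit test path, comparison) is the paper's scheme, and your classification of the boundary collisions is correct. But both quantitative steps have genuine gaps. For the lower bound, you extract only the collision--ejection contribution of the colliding pair, which is $\theta$-independent, and you yourself concede that in the double-binary case $a_2=0$ (i.e.\ $\tilde q_1(0)=\tilde q_2(0)$, which is in fact the binding case) this is too weak; your proposed repair --- quantifying ``transverse kinetic energy'' forced by the topological constraints --- is left entirely unquantified and is not the mechanism that closes the argument. The paper instead obtains the needed $\theta$-dependent surplus from the \emph{non-colliding} pairs: writing $\mathcal A=\mathcal A_{12}+\mathcal A_{13}+\mathcal A_{14}+\mathcal A_{23}$ in the variables $q_1-q_2$, $q_1+q_2$, $q_1$, $q_2$, the boundary sets $V_0$ and $V_1(\theta)$ force each of these relative vectors to sweep a prescribed angle ($\theta$ for $q_1+q_2$; $\theta\pm\alpha$ for $q_1$ and $q_2$, combined via concavity of $x^{2/3}$), and the fixed-angle Keplerian estimate $\frac{3}{2}(\mu\alpha^2\theta^2T)^{1/3}$ of Gordon/Chen converts each sweep into action over the \emph{full} interval $[0,1]$ (restricting to $[0,\tau]$ as you suggest only weakens the collision term, since that bound is increasing in $T$). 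This yields $g_1(\theta)=\frac{3}{2}\bigl[(2\pi^2)^{1/3}+(2\theta^2)^{1/3}+\theta^{2/3}\bigr]$, and the two $\theta$-dependent terms are exactly what is needed near $\theta=\pi/7$.

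The second gap is the test path. A single linear homotopy between optimized boundary configurations is too crude at the upper end of the interval: $\pi/7$ is precisely the point where the margin between the true minimum and $g_1(\theta)$ becomes thin, so the test path must approximate the minimizer closely there. The paper uses eight different piecewise-linear paths, each interpolating eleven numerically computed points of the actual minimizer on a subinterval of $(0,\pi/7]$ (with only the endpoint re-fitted to $V_1(\theta)$ as $\theta$ varies), and evaluates each segment's action in closed form. Without such a near-optimal construction, your step three --- the strict inequality between the test-path action and the lower bound throughout $(0,\pi/7]$ --- is an unverified numerical claim, and it is the claim on which the whole lemma rests.
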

Lemma \ref{noncollision} is shown by Lemma \ref{actionestimatelowerbdd} and Lemma \ref{testpathdef}. The next step is to extend the minimizer $\tilde{q}(t) \, (t \in [0,1])$ to a periodic or quasi-periodic orbit. 
\begin{theorem}\label{progradeextists}
For each given $\theta \in (0, \pi/7]$, the corresponding minimizing path $\tilde{q}(t) \, (t \in [0,1])$ can be extended to a periodic or quasi-periodic orbit.
\end{theorem}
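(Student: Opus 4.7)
Since Lemma \ref{noncollision} rules out collisions on $[0,1]$, $\tilde q$ is real-analytic there and satisfies Newton's equations classically. The plan is to extend $\tilde q$ to all of $\mathbb{R}$ by reflecting across the two boundary instants $t=0$ and $t=1$, and then to read off (quasi-)periodicity from the composition of the two reflections.

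First I would make the natural (transversality) boundary conditions explicit. Because $\tilde q$ is collision-free it lies in the interior of $V_0$ and $V_1(\theta)$, so the standard Euler--Lagrange argument for a free-endpoint minimizer gives $\dot{\tilde q}(0)\perp T_{\tilde q(0)}V_0$ and $\dot{\tilde q}(1)\perp T_{\tilde q(1)}V_1(\theta)$. Parameterizing by $a_1,a_2$ from \eqref{qstartspace} and $b_1,b_2$ from \eqref{qendspace} and exploiting $q_3=-q_2$, $q_4=-q_1$, a short calculation yields: at $t=0$ every velocity has zero $x$-component, so $\dot{\tilde q}(0)$ is purely vertical; at $t=1$, after pulling back by $R(-\theta)$ so that the configuration is axis-aligned, the conditions read $\dot{\tilde q}_{1x}(1)+\dot{\tilde q}_{2x}(1)=0$ and $\dot{\tilde q}_{1y}(1)=\dot{\tilde q}_{2y}(1)$.

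Next I would introduce two involutions of $V$: let $A$ reflect each $q_i$ about the $x$-axis (no permutation of labels), and let $B$ reflect each $q_i$ about the line $L_\theta=\{(r\cos\theta,r\sin\theta):r\in\mathbb{R}\}$ and simultaneously apply the body permutation $\pi=(1\,2)(3\,4)$. One checks that $A$ and $B$ preserve the parallelogram subspace $V$ and commute with the equal-mass Newton vector field, and that the boundary identities above translate exactly to $A\tilde q(0)=\tilde q(0)$, $A\dot{\tilde q}(0)=-\dot{\tilde q}(0)$, $B\tilde q(1)=\tilde q(1)$, $B\dot{\tilde q}(1)=-\dot{\tilde q}(1)$. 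Defining $\tilde q(-t):=A\tilde q(t)$ for $t\in[0,1]$ and $\tilde q(1+t):=B\tilde q(1-t)$ for $t\in[0,1]$ then produces a $C^1$ extension to $[-1,2]$, which must be a classical solution by ODE uniqueness and hence real-analytic.

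Finally, iterating the reflections yields a global smooth solution satisfying $\tilde q(t+2)=(BA)\tilde q(t)$ for every $t\in\mathbb{R}$. A direct computation shows $BA$ acts as counterclockwise rotation by $2\theta$ composed with the permutation $\pi$. When $\theta/\pi\in\mathbb{Q}$, writing $\theta=p\pi/q$ in lowest terms, the orbit closes with period $2q$ if $q$ is even and $4q$ if $q$ is odd; when $\theta/\pi\notin\mathbb{Q}$ it is quasi-periodic, rotating by $2\theta$ per time step of length $2$. The only genuinely computational step is the transversality calculation identifying the natural boundary conditions with the $(-1)$-eigenspace conditions for $A$ and $B$; everything after that is bookkeeping.
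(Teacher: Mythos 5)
Your proposal is correct and follows essentially the same route as the paper: the transversality conditions you derive at $t=0$ and $t=1$ are exactly those of Lemmas \ref{extforat0} and \ref{extforat1}, your involutions $A$ and $B$ are the paper's reflections $q(t)\mapsto q(-t)B$ and $q(t)\mapsto \pi\,q(2-t)BR(2\theta)$, and the composition $BA$ acting as rotation by $2\theta$ times the swap $(1\,2)(3\,4)$ reproduces the paper's relation $\tilde q(t+4)=\tilde q(t)R(4\theta)$. The only differences are cosmetic (group-theoretic bookkeeping in place of explicit matrices, and a slightly sharper minimal-period count when $q$ is even, where the paper is content with the period $4l_1$).
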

The proof of Theorem \ref{progradeextists} can be found in Theorem \ref{periodicextprograde}. Numerically, the periodic orbit in Theorem \ref{progradeextists} is exactly the prograde double-double orbit (as in Fig.~\ref{fig1} (b)). In what follows, we show that $\tilde{q}(t) \, (t \in [0,1])$ must be different from the retrograde double-double orbits.

%\begin{remark}
%We noticed that in \cite{CH3}, Chen showed the existence of many periodic orbits with arbitrary masses in the four-body problem, which can be characterized as action minimizers between two linear configurations. These orbits are motivated by the periodic orbit consisting of two identical star-shaped simple closed curves in the equal-mass four-body problem. \cite{CH4}  
%\end{remark}
By Chen's work \cite{CH4}, the retrograde double-double orbits can be characterized as an action minimizer connecting a collinear configuration and a rectangular configuration. It is shown \cite{CH4} that for any $\theta \in (0, \pi/2)$, there exists a collision-free action minimizer $q^{*}=q^{*}(t) \, (t \in [0,1])$, such that 
\begin{equation}\label{Aq8001}
\mathcal{A}(q^{*}) = \inf_{q \in \Sigma(\theta)^{*}}\mathcal{A}(q)= \inf_{q \in \Sigma(\theta)^{*}} \int_0^1 (K+U) \, dt,
\end{equation}
where
\begin{equation}\label{qendspace001}
\widetilde{V}_1(\theta) = \left\{Q_e=\begin{bmatrix}
-b_1 & -b_2 \\
-b_1  & b_2 \\
b_1  & -b_2 \\
b_1  &  b_2
\end{bmatrix} R(\theta) \,  \Bigg| \, b_1 \in \mathbb{R}, \, b_2  \in \mathbb{R} \right\}, 
\end{equation}
and \[ \Sigma(\theta)^{*} = \left\{q \in H^1([0,1],V) \, \bigg| \,  q(0)\in V_0, \, \, q(1)\in \widetilde{V}_1(\theta) \right\}.  \]

A new geometric argument is introduced to show that for any $\theta \in (0, \pi/2)$, our action minimizer $\tilde{q}(t) \, (t \in [0,1])$ is different from the action minimizer $q^{*}=q^{*}(t) \, (t \in [0,1])$ in \cite{CH4}. Furthermore, by introducing a new coordinate and analyzing the geometric properties, we can show the following interesting result: 
\begin{theorem}
Assume $\theta \in (0, \pi/2)$. The actions of the two minimizers satisfy
\[\mathcal{A}(\tilde{q}) >\mathcal{A}(q^{*}). \]
Furthermore, the minimizer $q^{*}$ satisfies that $q^{*}_1(t)-q^{*}_2(t)$ is in the closed second quadrant and $q^{*}_1(t)+q^{*}_2(t)$ is in the closed third quadrant for all $t \in [0,1]$.
\end{theorem}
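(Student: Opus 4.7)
The plan is to introduce the Jacobi-like coordinates $u=q_1-q_2$ and $v=q_1+q_2$, prove the quadrant structure of $q^{*}$ by a reflection-symmetrization, then deduce the strict inequality from the fact that $q^{*}$ sits strictly outside the prograde endpoint cone. Under this change of variables the parallelogram constraint is absorbed, the action reads
\[
\mathcal{A}(q)=\int_0^1\!\Bigl[\tfrac{1}{2}(|\dot u|^2+|\dot v|^2)+\tfrac{2}{|u|}+\tfrac{2}{|v|}+\tfrac{1}{|u+v|}+\tfrac{1}{|u-v|}\Bigr]\,dt,
\]
$V_0$ becomes the set with $u(0),v(0)$ on the non-positive $x$-axis (and $|v(0)|\ge|u(0)|$), and $\widetilde V_1(\theta)$ becomes $u(1)\in\ell_u,\,v(1)\in\ell_v$, with $\ell_u=\mathbb{R}(-\sin\theta,\cos\theta)$ and $\ell_v=\mathbb{R}(\cos\theta,\sin\theta)$ orthogonal lines through the origin. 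A short check shows that $V_1(\theta)\subset\widetilde V_1(\theta)$ is precisely the subset where $u(1)$ lies on the closed fourth-quadrant side of $\ell_u$ (i.e.\ $b_2\ge 0$) and $v(1)$ on the closed third-quadrant side of $\ell_v$ (i.e.\ $b_1\ge 0$).

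For the quadrant structure of $q^{*}$, the key identity $|u\pm v|^2=|u|^2+|v|^2\pm 2u\cdot v$ combined with the parallelogram law $|u+v|^2+|u-v|^2=2(|u|^2+|v|^2)$ and the convexity of $s\mapsto s^{-1/2}$ show that for fixed $|u|,|v|$ the coupling $1/|u+v|+1/|u-v|$ is a symmetric function of $u\cdot v$ strictly increasing in $|u\cdot v|$. Arguing by contradiction, suppose $v(t)$ leaves the closed third quadrant on a maximal subinterval $(t_a,t_b)$ with $v_y>0$ there. Reflecting $v$ across the $x$-axis on $(t_a,t_b)$ leaves $K$, $2/|u|$ and $2/|v|$ unchanged, while $u\cdot v$ changes by $-2u_yv_y$; if the parallel claim that $u(t)$ lies in the closed second quadrant (so $u_y\ge 0$) is enforced at the same time, $|u\cdot v|$ strictly decreases on a positive-measure set, producing a strict drop in $\mathcal{A}$ and contradicting the minimality of $q^{*}$. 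The two quadrant claims (on $u$ and on $v$) must therefore be proved jointly via a bootstrap. When the offending subinterval extends up to $t=1$, the coordinate-axis reflection is replaced by the reflection across $\ell_v$ itself (an isometry that preserves $\widetilde V_1(\theta)$), together with the parallelogram relabeling symmetries $(u,v)\mapsto(\pm u,\pm v)$ to restore compatibility with $V_0$.

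For the strict inequality, the quadrant structure together with the collision-free property of $q^{*}$ from \cite{CH4} forces $u(1)=s(-\sin\theta,\cos\theta)$ with $s>0$ strictly, hence $b_2<0$ in (\ref{qendspace001}). Therefore $q^{*}(1)\notin V_1(\theta)$ and $q^{*}\notin\Sigma(\theta)$. Since $\Sigma(\theta)\subset\Sigma(\theta)^{*}$ and $q^{*}$ attains the infimum on $\Sigma(\theta)^{*}$ strictly outside the smaller class, lower semicontinuity of $\mathcal{A}$ and the closedness of the slice $\{b_2=0\}\subset\widetilde V_1(\theta)$ produce a positive gap, giving $\mathcal{A}(\tilde q)>\mathcal{A}(q^{*})$.

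The hardest step is the reflection-symmetrization: because the endpoint lines $\ell_u,\ell_v$ are tilted by $\theta$ they are not preserved by coordinate-axis reflections, so handling the case where the offending subinterval touches $t=1$ forces one to reflect across the tilted lines themselves and to carefully track how the reflection interacts with $V_0$. The bootstrap between the $u$- and $v$-claims is also delicate, since the sign of the change in $u\cdot v$ produced by reflecting one of the two vectors depends on the current quadrant of the other.
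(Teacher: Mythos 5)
Your coordinates $u=q_1-q_2$, $v=q_1+q_2$ and the monotonicity fact (for fixed $|u|,|v|$ the coupling $1/|u+v|+1/|u-v|$ increases with $|u\cdot v|$) are exactly the paper's Proposition \ref{Uintheta}, and your description of the boundary sets is correct. But the two load-bearing steps of your argument each have a genuine gap.

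First, the symmetrization. You reflect \emph{one} of the two vectors across a coordinate axis on a maximal offending subinterval and claim $|u\cdot v|$ drops. Whether it drops depends on the relative sign of $u_xv_x$ and $u_yv_y$: if, say, $u\in\mathsf{Q}_4$ and $v\in\mathsf{Q}_1$, then $u_xv_x>0$ and $u_yv_y<0$, so replacing $u\cdot v=u_xv_x+u_yv_y$ by $u_xv_x-u_yv_y$ \emph{increases} $|u\cdot v|$ and hence the action. You acknowledge this ("the sign \dots depends on the current quadrant of the other") and defer to an unexecuted "bootstrap," but that bootstrap is precisely the hard part, and it is not clear it closes: the two quadrant memberships you are trying to prove are the very hypotheses you need for each local reflection to be action-decreasing. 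The paper avoids this circularity with a single \emph{global, simultaneous} folding $\widetilde{Z}_1(t)=(-|Z_{1x}(t)|,\,|Z_{1y}(t)|)$, $\widetilde{Z}_2(t)=(-|Z_{2x}(t)|,\,-|Z_{2y}(t)|)$, which places the two vectors in adjacent closed quadrants for \emph{all} $t$ at once; Lemma \ref{potentialinequality} then shows the line-angle $\Delta$ can only increase, i.e.\ $|Z_1\cdot Z_2|$ is minimized pointwise over all sign choices, so the potential never increases, with no case analysis on subintervals and no issue at the tilted endpoint lines (since $\widetilde{Z}_1(1)=-Z_1(1)$ still lies in $\widetilde V_1(\theta)$, which allows $b_2$ of either sign).

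Second, the strict inequality. From "$q^{*}(1)\notin V_1(\theta)$, hence $q^{*}\notin\Sigma(\theta)$" you cannot conclude $\inf_{\Sigma(\theta)}\mathcal{A}>\inf_{\Sigma(\theta)^{*}}\mathcal{A}$: the minimizer of the larger problem need not be unique, so there could a priori be another minimizer of $\mathcal{A}$ over $\Sigma(\theta)^{*}$ lying inside $\Sigma(\theta)$, forcing equality. "Lower semicontinuity plus closedness of the slice $\{b_2=0\}$" does not manufacture a gap. The paper instead compares $\tilde q$ itself: folding $\tilde q$ gives $\mathcal{A}(\tilde q)\ge\mathcal{A}(\widetilde Z_1,\widetilde Z_2)\ge\mathcal{A}(q^{*})$, and equality would make $(Z_1,Z_2)$ and its fold both smooth solutions, forcing every axis crossing of $Z_i$ to be non-transversal; the invariant-set rigidity results (Propositions \ref{ontheaxessametime} and \ref{ziontheaxesforaninterval}) then pin both $Z_1$ and $Z_2$ to the axes for all time, contradicting $Z_1(1)\in\mathsf{Q}_4$. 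Some argument of this unique-continuation type is needed to upgrade $\ge$ to $>$; your proposal does not supply one.
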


Indeed, the geometric argument in this paper can be applied to study several orbits in the planar three-body problem.

A direct application is to analyze the properties of the retrograde orbits and the prograde orbits with mass $M=[1, \, 1, \, m]$. It is known \cite{CH, CH2} that, for each given $\theta \in (0, \pi/2)$, both orbits can be characterized as action minimizers connecting a collinear configuration and an isosceles configuration. In the collinear configuration, the three bodies lie on the $x$-axis with order constraints $q_{3x}(0)\le q_{2x}(0) \le q_{1x}(0)$. In the isosceles configuration, $q_3(1)$ is the vertex of the isosceles, while its symmetric line is a $\theta$ counterclockwise rotation of the $x$-axis. 

Let $\mathcal{P}_{r, \, \theta}$ be the minimizing path of the retrograde orbit and $\mathcal{P}_{p, \, \theta}$ be the minimizing path of the prograde orbit. By introducing a Jacobi coordinate $Z_1=q_1-q_2$, \, $Z_2= q_3-\frac{q_1+q_2}{2}$, we can show that for each given $\theta \in (0, \pi/2)$, the action $\mathcal{A} =\int_0^1 (K+U) \, dt$ of the two minimizers satisfies
\[ \mathcal{A}(\mathcal{P}_{p, \, \theta})>  \mathcal{A}(\mathcal{P}_{r, \, \theta}). \]
Furthermore, in the minimizer $\mathcal{P}_{r, \, \theta}$ corresponding to the retrograde orbit, $Z_1(t)$ must stay in the closed fourth quadrant and $Z_2(t)$ must stay in the closed third quadrant for all $t \in [0,1]$.

Another application is to study the variational properties of the Schubart orbit and the H\'{e}non orbit with equal mass $M=[1, \, 1,\,  1]$. Let
\[ Q_{S}=\left\{ q(0)=\begin{bmatrix}
-2a_1-a_2   &  0   \\
a_1-a_2  &  0     \\
a_1+2a_2   & 0   
\end{bmatrix} \, \bigg| \, a_1 \geq 0, \,\,  a_2 \geq 0  \right\}, \]
\[Q_{E}=\left\{q(1)= \begin{bmatrix}
0 & -2b_1 \\
-b_2 & b_1 \\
b_2 & b_1
\end{bmatrix} \, \bigg| \, b_1 \in \mathbb{R}, \, \, b_2 \in \mathbb{R} \right\}. \]
Together with T. Ouyang and Z. Xie, we \cite{Ou2} successfully applied the geometric argument to show that the action minimizer $\mathcal{P}_0$ connecting $Q_{S}$ and $Q_{E}$ must coincide with either the Schubart orbit or the H\'{e}non orbit.

The paper is organized as follows. Section \ref{coercivitysection} introduces a standard coercivity result. As one of its applications, it implies the existence of the minimizer $\tilde{q}(t) \, (t\in [0,1])$ in our case. Section \ref{extension01} extends the minimizer when one of the boundaries is collision-free. Section \ref{lowerbddcol} shows the lower bound of the action $\mathcal{A}(\tilde{q}(t))$ when $\tilde{q}(t) \, (t \in [0,1])$ has boundary collisions. Section \ref{testpathdef} defines a test path $\mathcal{P}_{test}=\mathcal{P}_{test, \theta}$ for each $\theta\in (0, \pi/7]$ such that the action of the test path $\mathcal{A}_{test}=\mathcal{A}(\mathcal{P}_{test})$ is strictly smaller than the lower bound of action of paths with boundary collisions. We then extend the collision-free minimizer $\tilde{q}(t) \, (t \in [0,1])$ to a periodic or quasi-periodic orbit in Section \ref{possibleext}. In the end, we show some geometric properties of the two minimizers $\tilde{q}(t) \, (t \in [0,1])$ and $q^{*}(t) \, (t \in [0,1])$ of \cite{CH4} in Section \ref{morevariationprop}.

\section{Variational settings and coercivity}\label{coercivitysection}
In this section, we introduce a standard theorem which can be applied to our case. Actually, these coercivity results can also be found in \cite{CH1,CH}. Let $q_i=(q_{ix}, \, q_{iy})$ be the coordinate of body $m_i  \ (i=1,2, 3,4)$. The position matrix is denoted by \[
q= \begin{bmatrix}
q_1 \\
q_2\\
q_3 \\
q_4 \\
\end{bmatrix}= \begin{bmatrix}
q_{1x}   &   q_{1y} \\
q_{2x}  &   q_{2y} \\
q_{3x}  &   q_{3y} \\
q_{4x}  &   q_{4y} \\
\end{bmatrix}.
 \]
We define
\begin{eqnarray*}
% \chi &:=& \left\{ (q_1^T,q_2^T,q_3^T,q_4^T)^T \in \mathbf{R}_{4 \times 2} \, \bigg| \,  m_1 q_1+m_2 q_2+m_3 q_3 +m_4 q_4 =0 \right\}; \\
V &:=&\left\{ q \in \mathbf{R}_{4 \times 2}\, \bigg| \, q_1=-q_4, \, \, q_2=-q_3 \right\}; \\
V_0 &:=& \left\{Q_s=\begin{bmatrix}
-a_1-a_2 & 0 \\
-a_1  & 0 \\
a_1  & 0 \\
a_1+a_2  &  0
\end{bmatrix} \, \Bigg| \, a_1 \geq 0, \, \, a_2 \geq 0 \right\}; \\
V_1(\theta) &:=& \left\{Q_e=\begin{bmatrix}
-b_1 & -b_2 \\
-b_1  & b_2 \\
b_1  & -b_2 \\
b_1  &  b_2
\end{bmatrix} R(\theta) \,  \Bigg| \, b_1 \geq 0, \, \, b_2 \geq 0 \right\}; \\
\Sigma(\theta) &:=& \left\{q(t)\in H^1([0,1],V) \, \bigg| \,  q(0)\in V_0, \, \, q(1)\in V_1(\theta) \right\},
\end{eqnarray*}
where $R(\theta)= \begin{bmatrix}
\cos (\theta) & \sin(\theta) \\
-\sin (\theta) & \cos (\theta)
\end{bmatrix}$ and $\theta \in (0, \pi/2)$. The following minimizing problem is studied:
\begin{equation}\label{minA}
  \mathcal{A}_m=\inf_{q \in \Sigma(\theta)}\mathcal{A}(q )= \inf_{q \in \Sigma(\theta)} \int_0^1 (K+U) \, dt,
\end{equation}
where $\displaystyle K= \frac{1}{2}\sum_{i=1}^{4}  |\dot{q}_i|^2$ and $\displaystyle U= \sum_{1 \leq i<j \leq 4} \frac{1}{|q_i- q_j|}$ with $| \cdot |$ as the standard Euclidean inner product. In order to show the coercivity of \eqref{minA}, we introduce a general theorem.
\begin{theorem}\label{generalcoercivity}
Let $S$ be a $n$-dimension Eculidean space $\mathbf{R}^n$. $S_0$ and $S_1$ are finite-dimensional subspaces of $S$ such that $S_0 \cap S_1= \{0\}$.  Let $X_0\subset S_0$ and $X_1\subset S_1$ be two closed subsets of $S$. Let $\Sigma_X\subset \chi \equiv H^1([0,1], S)$ be a weakly closed subset of $\chi$ such that $q(0)\in X_0$ and $q(1)\in X_1$ for any $q(t)\in \Sigma_X$, then the minimizing problem
\begin{equation*}
  \inf_{q \in \Sigma_X}\mathcal{A}
\end{equation*}
attains its minimum in $\Sigma_X$.
\end{theorem}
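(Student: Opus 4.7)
The plan is to run the standard direct method: establish coercivity of $\mathcal{A}$ on $\Sigma_X$, extract a weakly convergent subsequence from a minimizing sequence, and exploit weak lower semi-continuity of $\mathcal{A}$.

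For coercivity, since $U\ge 0$ we have $\int_0^1|\dot q|^2\,dt \le 2\mathcal{A}(q)$, so only $\|q\|_{L^2}$ needs to be controlled, and via the pointwise bound $|q(t)|^2 \le 2|q(0)|^2 + 2\int_0^1|\dot q|^2\,dt$ it suffices to bound $|q(0)|$. The Cauchy--Schwarz estimate $|q(1)-q(0)|^2 \le \int_0^1|\dot q|^2\,dt \le 2\mathcal{A}(q)$ combined with $q(0)\in S_0$ and $q(1)\in S_1$ is exactly where the hypothesis $S_0\cap S_1=\{0\}$ enters: the linear map $(u,v)\mapsto v-u$ from $S_0\times S_1$ to $S_0+S_1$ is a bijection by transversality, and since both subspaces are finite dimensional its inverse is continuous, yielding a constant $C=C(S_0,S_1)>0$ with $|u|+|v|\le C|v-u|$. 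Applying this to $u=q(0)$, $v=q(1)$ controls both endpoints by a constant multiple of $\sqrt{2\mathcal{A}(q)}$, which gives an estimate of the form $\|q\|_{H^1}^2 \le C'(\mathcal{A}(q)+1)$.

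Next, choose a minimizing sequence $\{q_n\}\subset\Sigma_X$ with $\mathcal{A}(q_n)\downarrow\inf_{\Sigma_X}\mathcal{A}$. Coercivity plus reflexivity of $H^1$ produces a subsequence (still denoted $\{q_n\}$) weakly converging to some $q^*\in H^1([0,1],S)$, and the weak closedness of $\Sigma_X$ forces $q^*\in\Sigma_X$; the Sobolev embedding $H^1([0,1],S)\hookrightarrow C^0([0,1],S)$ makes the evaluations $q\mapsto q(0),q(1)$ continuous, so the boundary conditions pass to the limit. For weak lower semi-continuity, the kinetic part $\tfrac12\int|\dot q|^2\,dt$ is convex and continuous on $H^1$, hence weakly l.s.c.; for $\int_0^1 U(q(t))\,dt$, the uniform convergence $q_n\to q^*$ on $[0,1]$ combined with $U\ge 0$ and the lower semi-continuity of $U$ on configuration space (treating collisions of $q^*$ as $U(q^*)=+\infty$) lets Fatou's lemma deliver $\liminf\int U(q_n)\ge \int U(q^*)$. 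Adding the two inequalities yields $\mathcal{A}(q^*)\le\liminf\mathcal{A}(q_n)=\inf_{\Sigma_X}\mathcal{A}$, so $q^*$ is a minimizer.

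The main, and essentially only, obstacle is the coercivity step, and it is precisely the hypothesis $S_0\cap S_1=\{0\}$ that is needed there: without transversality, $q(0)$ and $q(1)$ could escape to infinity along a direction shared by $S_0$ and $S_1$ while keeping $q(1)-q(0)$ bounded, so the action would not control the $L^2$ norm of $q$ and the minimizing sequence need not be bounded in $H^1$. Once coercivity is in place, the rest follows the textbook direct-method template applied to a nonnegative action with a singular but lower semi-continuous potential.
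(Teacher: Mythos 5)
Your proposal is correct and follows essentially the same direct-method argument as the paper: coercivity obtained from the transversality $S_0\cap S_1=\{0\}$, which lets $|q(1)-q(0)|$ (and hence $\int_0^1|\dot q|^2\,dt$) control $|q(0)|$ and thus the full $H^1$ norm, followed by weak lower semi-continuity of the kinetic term and Fatou's lemma for the potential along a weakly convergent minimizing sequence. The only cosmetic difference is how the transversality constant is produced: you invert the finite-dimensional linear bijection $(u,v)\mapsto v-u$ to get $|u|+|v|\le C|v-u|$, while the paper bounds the angle between nonzero vectors of $S_0$ and $S_1$ away from zero by compactness of the unit spheres; these are equivalent in finite dimensions.
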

The coercivity of \eqref{minA} is a direct application of Theorem \ref{generalcoercivity}. For given $\theta \in (0, \pi/2)$, let 
\[ S_0= \left\{Q_s=\begin{bmatrix}
-a_1-a_2 & 0 \\
-a_1  & 0 \\
a_1  & 0 \\
a_1+a_2  &  0
\end{bmatrix} \, \Bigg| \, a_1, \, a_2\in \mathbb{R}\right\}, \]
and \[ S_1=\left\{Q_e=\begin{bmatrix}
-b_1 & -b_2 \\
-b_1  & b_2 \\
b_1  & -b_2 \\
b_1  &  b_2
\end{bmatrix} R(\theta) \,  \Bigg| \, b_1, \, b_2 \in \mathbb{R} \right\}.  \] 
It is clear that $S_0$ and $S_1$ are linear subspaces in $V$ satisfying $S_0 \cap S_1=\{0\}$. Note that $V_0 \subset S_0$ and $V_1(\theta) \subset S_1$ are closed subsets of the configuration space $V$. $\Sigma(\theta)$ is a weakly closed subset of $H^1([0, 1], V )$ (this follows from the compactness of the embedding $H^1([0, 1], V ) \hookrightarrow C^0([0, 1], V ))$. By Theorem \ref{generalcoercivity}, there exists some $\tilde{q} \in \Sigma(\theta)$, such that
\[ \mathcal{A}(\tilde{q}) = \inf_{q\in \Sigma(\theta)}\mathcal{A}(q)= \inf_{q\in \Sigma(\theta)} \int_0^1 (K+U) \, dt. \]

For readers' convenience, we give a proof of Theorem \ref{generalcoercivity}. A similar proof can also be found in \cite{CH}. The following standard result is needed for the proof:
\begin{lemma}\label{coercivitypre}
Let B be a reflexive Banach space and $M \subset B$ be a weakly closed subset. Suppose $F: B\rightarrow R\cup\{+\infty\}$ satisfies:\\
{\em(a)} F is coercive on M with respect to the norm of $B$; that is, $F(x)\rightarrow +\infty$ as $\|x\|\rightarrow +\infty$, $x\in M$; \\
{\em(b)} F is weakly sequentially lower semicontinuous on M.\\
Then F is bounded below on M and attains its minimum on M.
\end{lemma}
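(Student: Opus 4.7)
The plan is to run the classical direct method of the calculus of variations. First, I would let $\alpha = \inf_{x \in M} F(x) \in [-\infty, +\infty]$ and pick a minimizing sequence $\{x_n\} \subset M$ with $F(x_n) \to \alpha$. Since $F$ is not identically $+\infty$ in the cases we care about, we may assume $F(x_n) < +\infty$ for all $n$. The first structural observation is that coercivity forces the sequence $\{x_n\}$ to be bounded in $B$: if a subsequence satisfied $\|x_{n_k}\| \to \infty$, then $F(x_{n_k}) \to +\infty$ by (a), which contradicts $F(x_n) \to \alpha$ (and in particular shows $\alpha \neq -\infty$, handling the ``bounded below'' claim as soon as we produce an actual minimizer).

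Second, I would invoke reflexivity: by the Eberlein--\v{S}mulian theorem, bounded sequences in a reflexive Banach space admit weakly convergent subsequences. Extract $x_{n_k} \rightharpoonup x^{*}$ in $B$. Because $M$ is weakly closed by hypothesis, the weak limit satisfies $x^{*} \in M$. This is the step that uses both reflexivity (to get a weak cluster point) and the weak closedness of $M$ (to keep the limit in the admissible set).

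Third, I would apply weak sequential lower semicontinuity (b) to get
\[
F(x^{*}) \;\leq\; \liminf_{k \to \infty} F(x_{n_k}) \;=\; \lim_{k \to \infty} F(x_{n_k}) \;=\; \alpha.
\]
Since $x^{*} \in M$ we also have $F(x^{*}) \geq \alpha$ by definition of the infimum, so $F(x^{*}) = \alpha$. This simultaneously shows $\alpha > -\infty$ (hence $F$ is bounded below on $M$) and that the infimum is attained at $x^{*}$.

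There is essentially no hard step here: every ingredient is supplied by an assumption, and the only thing to watch is the bookkeeping with $\alpha = -\infty$. The cleanest way to rule this out is the observation I made in the first paragraph: a minimizing sequence must be norm-bounded by coercivity, so the weak-limit argument produces a genuine element $x^{*} \in M$ with $F(x^{*}) \leq \liminf F(x_{n_k})$, and this forces $\alpha$ to be finite. I would present the proof in this order (bounded sequence $\Rightarrow$ weak subsequential limit in $M$ $\Rightarrow$ lower semicontinuity gives attainment $\Rightarrow$ boundedness below is a free consequence), which avoids any separate case analysis on whether $\alpha = -\infty$.
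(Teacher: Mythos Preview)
Your argument is correct and is precisely the standard direct-method proof. Note, however, that the paper does not actually prove this lemma: it is quoted as a ``standard result'' and used as a black box to establish Theorem~2.1, with the paper's effort going into verifying hypotheses (a) and (b) for the specific action functional. So there is no ``paper's own proof'' to compare against; your write-up supplies exactly the routine argument the authors omit.
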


In order to prove Theorem \ref{generalcoercivity}, we need to verify the assumptions in Lemma \ref{coercivitypre}. Actually, coercivity is ensured by the boundary conditions in our path space and the weakly sequentially lower semi-continuuity can be easily shown.

\vskip\baselineskip

\leftline{\textbf{Proof of Theorem 2.1:}}
\textbf{Claim}: \, There exists a constant $c$ such that $|q(0)-q(1)|\ge c|q(0)|$ holds for any $q\in \Sigma(\theta)_X$.

Let $u\in S_0, v\in S_1$ and $|u|=|v|=1$, $f(u,v)=\langle u,v\rangle=\cos(u,v)$. Note that $S_0$ and $S_1$ are finite dimension vector spaces. Assume the dimensions of $S_0$ and $S_1$ are $m$ and $n$ respectively. So $f$ is a continuous function from $S_0 \times S_1$ to $[-1,1]$. It must attain its minimum and maximum and clearly $\inf f=-\sup f$. If $\sup f=1$, it means that $u=v$, which contradicts the assumption that $S_0\cap S_1=\{0\}$. Thus $\sup f<1$. That is to say, for any nonzero vector $q(0)\in S_0$ and $q(1)\in S_1$, the angle between them has a low bound $\theta_0>0$. We have
\begin{equation*}
  |q(0)-q(1)|\ge |q(0)|\sin(q(0),q(1))\ge \sin \theta_0|q(0)|=c |q(0)|.
\end{equation*}
Hence, the claim holds. \qed

For any $q(t)\in \chi$, we consider the displacement function
\begin{equation*}
  \delta(q):=\max_{t_1,t_2 \in [0,1]}|q(t_1)-q(t_2)|.
\end{equation*}
This function measures the total displacement of the configuration in $[0,1]$. By our claim above,
\begin{equation*}
  \delta(q)\ge |q(0)-q(1)|\ge c|q(0)|.
\end{equation*}
Therefore
\[ |q(t)|\le |q(0)|+\delta(q)\le (1+c)\delta(q), \quad \forall t\in[0,1]. \] It follows that
\begin{equation*}
  \int_0^1|q(t)|^2dt\le (1+c)^2\delta(q)^2.
\end{equation*}
By Cauchy-Schwartz inequality, there holds
\begin{equation*}
  \int_0^1|\dot q(t)|^2 dt\ge \big(\int_0^1 |\dot q(t)| dt\big)^2\ge \delta(q)^2.
\end{equation*}
It implies that
\begin{equation*}
\int_0^1|q(t)|^2dt \le (1+c)^2 \int_0^1|\dot q(t)|^2 dt.
\end{equation*}
Hence,
\begin{equation*}
  \|{q}\|^2=\int_0^1\big(|q(t)|^2+|\dot q(t)|^2\big)dt\le C\int_0^1 |\dot q(t)|^2 dt\le 2C \mathcal{A}(q),
\end{equation*}
where $C=(1+c)^2+1$. This implies that the functional $\mathcal{A}$ is coercive.

Now it remains to verify that $\mathcal{A}$ is sequentially lower semicontinuous in the weak topology. Let $q^{(n)}=\begin{bmatrix}
q_1^{(n)} \\
q_2^{(n)} \\
q_3^{(n)} \\
q_4^{(n)}
\end{bmatrix}$ be a sequence in $\Sigma_{X}$ which converges weakly to $q$. By passing to a subsequence if necessary, we can assume that $\mathcal{A}(q^{(n)})$ is bounded: $\displaystyle \mathcal{A}(q^{(n)})<c_0< +\infty$.

Let $q_{ij}^{(n)}:=|q_i^{(n)}-q_j^{(n)}|$, then $q_{ij}^{(n)}$ converges uniformly to $q_{ij}$(since the embedding $\chi\hookrightarrow C^0([0,1],V)$). The sequence $\frac{1}{|q_{ij}^{(n)}|}$ is bounded in $L^1[0,1]$, thus the measure of the collision set $\Delta$ on which $q_{ij}=0$ is $0$.
The sequence $\frac{1}{|q_{ij}^{(n)}|}$ converges to $\frac{1}{|q_{ij}|}$ pointwise, then by Fatou's lemma,
\begin{equation*}
  \int_0^1 \frac{1}{|q_{ij}|} dt\le \liminf_{n\rightarrow \infty}\int_0^1 \frac{1}{|q_{ij}^{(n)}|}  dt
\end{equation*}

Clearly the sequence $q^{(n)}$ converges to $q$ in $L^2([0,1],V)$. We use the fact that the norm is weak sequentially lower semicontinuous:
\begin{equation*}
  \|\dot q\|_{L^2}^2=\|q\|_{H^1}^2-\|q\|_{L^2}^2\le \liminf_{n\rightarrow +\infty}\|q^{(n)}\|_{H^1}^2-\|q\|_{L^2}^2=\liminf_{n\rightarrow +\infty}\|\dot q^{(n)}\|_{L^2}^2.
\end{equation*}
Thus \begin{eqnarray*}
     % \nonumber to remove numbering (before each equation)
       \mathcal{A}(q) &=& \frac{1}{2}\|\dot q\|_{L^2}^2+\sum_{i<j}\int_0^1\frac{1}{|q_{ij}|} dt \\
        &\le& \liminf_{n\rightarrow +\infty} \frac{1}{2} \|\dot q^{(n)}\|_{L^2}^2+\sum_{i<j}\liminf_{n\rightarrow +\infty}\int_0^1\frac{1}{|q_{ij}^{(n)}|} dt \\
        &\le& \liminf_{n\rightarrow +\infty}\mathcal{A}(q^{(n)}).
     \end{eqnarray*}
By Lemma \ref{coercivitypre}, the minimizing problem
\begin{equation*}
  \inf_{q(t)\in \Sigma_{X}}\mathcal{A}
\end{equation*}
attains its minimum in $\Sigma_{X}$. The proof is complete.

\section{Possible extensions of the minimizer $\tilde{q}(t)$}\label{extension01}
By Section \ref{coercivitysection}, for each given $\theta \in (0, \frac{\pi}{2})$, there exists some minimizing path $\tilde{q}= \begin{bmatrix}
\tilde{q}_{1} \\
\tilde{q}_{2} \\
\tilde{q}_{3} \\
\tilde{q}_{4}
\end{bmatrix} \in \Sigma(\theta)$, such that
\[ \mathcal{A}(\tilde{q}) = \inf_{q\in \Sigma(\theta)}\mathcal{A}(q)= \inf_{q\in \Sigma(\theta)} \int_0^1 (K+U) \, dt. \]
Note that the path $\tilde{q}(t) \in V$, it follows that for any $t \in [0,1]$, $\tilde{q}_1(t)= -\tilde{q}_4(t)$ and $\tilde{q}_2(t)= -\tilde{q}_3(t)$. Furthermore, the boundary configurations $\tilde{q}(0)$ and $\tilde{q}(1)$ satisfy
\begin{equation}
\tilde{q}(0)= \begin{bmatrix}
-a_{11}-a_{21} & 0 \\
-a_{11}  & 0 \\
a_{11}  & 0 \\
a_{11}+a_{21}  &  0
\end{bmatrix} \in V_0, \quad \tilde{q}(1)=\begin{bmatrix}
-b_{11} & -b_{21} \\
-b_{11}  & b_{21} \\
b_{11}  & -b_{21} \\
b_{11}  &  b_{21}
\end{bmatrix} R(\theta) \in V_1(\theta),
\end{equation}
where $a_{11}, a_{21}, b_{11}, b_{21} \ge 0$. By the celebrated works of Marchal \cite{Mar} and Chenciner \cite{CA}, it is known that $\tilde{q}(t)$ is collision-free in $(0,1)$. The only possible collisions are on the boundaries $\tilde{q}(0)$ and $\tilde{q}(1)$. If one of the boundaries is free of collision, we show that the minimizing path $\tilde{q}=\tilde{q}(t) \, (t \in [0,1])$ can be extended.
\begin{lemma}\label{extforat0}
If $\tilde{q}(0)$ has no collision, $\tilde{q}(t) \, (t \in [0,1])$ can be smoothly extended to $t \in (-1, 1]$.
\end{lemma}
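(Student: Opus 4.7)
My plan is to extend $\tilde{q}$ backwards in time by reflecting configurations across the $x$-axis: set $\bar{q}(t)=\sigma\,\tilde{q}(-t)$ for $t<0$, where $\sigma(x,y)=(x,-y)$ acts componentwise on the $4\times 2$ position matrix. The crux is that the natural (free) boundary condition for the minimizer on $V_0$ at $t=0$ forces the $x$-components of all four velocities $\dot{\tilde{q}}_i(0)$ to vanish, which is exactly the $C^1$ matching condition for this reflection. Since $\tilde{q}(0)$ is collision-free by hypothesis, the Newtonian potential $U$ is smooth in a neighborhood of $\tilde{q}(0)$, so the Marchal--Chenciner result combined with standard elliptic regularity for the Euler--Lagrange equation gives that $\tilde{q}$ is already $C^\infty$ on a one-sided neighborhood of $0$ in $[0,1]$.

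To extract the boundary condition, note that ``no collision at $t=0$'' means $a_{11}>0$ and $a_{21}>0$, so $\tilde{q}(0)$ is an interior point of $V_0$ inside the two-dimensional subspace $S_0$ of Section \ref{coercivitysection}. The two generators of the tangent space are
\[
E_1=\begin{bmatrix}-1&0\\-1&0\\1&0\\1&0\end{bmatrix},\qquad E_2=\begin{bmatrix}-1&0\\0&0\\0&0\\1&0\end{bmatrix},
\]
and they yield two-sided admissible variations $\tilde{q}+\varepsilon\eta$ with $\eta(0)\in\{E_1,E_2\}$ and $\eta(1)=0$. Stationarity of $\mathcal{A}$ gives $\langle\dot{\tilde{q}}(0),E_j\rangle=0$ for $j=1,2$ (Frobenius inner product). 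Using the constraints $\dot{\tilde{q}}_1=-\dot{\tilde{q}}_4$, $\dot{\tilde{q}}_2=-\dot{\tilde{q}}_3$ inherited from $V$, these two scalar equations reduce to $\dot{\tilde{q}}_{1x}(0)=\dot{\tilde{q}}_{2x}(0)=0$, whence all four $x$-components of $\dot{\tilde{q}}(0)$ vanish.

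Now define $\bar{q}(t)=\tilde{q}(t)$ on $[0,1]$ and $\bar{q}(t)=\sigma\,\tilde{q}(-t)$ on $(-1,0]$. The subspace $V$ is $\sigma$-invariant (since $q_1=-q_4$ implies $\sigma q_1=-\sigma q_4$, and likewise for $q_2,q_3$), and $\tilde{q}(0)$ lies on the $x$-axis so $\sigma\,\tilde{q}(0)=\tilde{q}(0)$ gives $C^0$ matching at $t=0$. The left-derivative of $\bar{q}$ at $0$ equals $-\sigma\,\dot{\tilde{q}}(0)$; since $-\sigma(v_x,v_y)=(-v_x,v_y)$, this agrees with $\dot{\tilde{q}}(0)$ precisely because its $x$-components vanish, as just shown. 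The potential $U$ depends only on pairwise Euclidean distances, hence $U\circ\sigma=U$ and $\sigma\,\tilde{q}(-t)$ also satisfies Newton's equations on $(-1,0)$. With $\bar{q}(0)$ collision-free and the $C^1$ matching in hand, uniqueness for the Newtonian initial-value problem at $t=0$ promotes $\bar{q}$ to a single $C^\infty$ solution on $(-1,1]$.

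The step requiring the most care is the derivation of the natural boundary condition as a genuine equality: it requires two-sided variations inside $V_0$, and this is exactly where the hypothesis that $\tilde{q}(0)$ has no collision enters. If either $a_{11}$ or $a_{21}$ vanished, one would be on the boundary of the quadrant $V_0$, only one-sided variations would be admissible, and the first-variation identity would collapse to an inequality that no longer forces $\dot{\tilde{q}}_{ix}(0)=0$; the reflection would then fail to be $C^1$. Everything downstream is a soft consequence of the time-reversal and reflection symmetries of the Newtonian Lagrangian.
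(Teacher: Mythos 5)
Your proposal is correct and follows essentially the same route as the paper: derive the natural (transversality) boundary condition at $t=0$ to conclude $\dot{\tilde{q}}_{ix}(0)=0$ for all $i$, then extend by time reversal composed with reflection across the $x$-axis and invoke uniqueness for the Newtonian initial-value problem. Your version is somewhat more explicit than the paper's about why the no-collision hypothesis makes the variations in $V_0$ two-sided, but the argument is the same.
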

\begin{proof}
The proof basically follows by the first variational formulas. When $\tilde{q}(0)$ has no collision, the path $\tilde{q}(t) \, (t \in [0,1])$ is smooth at $t=0$. Furthermore, $\tilde{q}(t) \, t\in [0, 1)$ is a classical solution. Since  $\tilde{q}(t) \, (t \in [0,1])$ is an action minimizer of \eqref{minA}, we apply the first variational formulas on the boundary $\tilde{q}(0)$:
\[ \dot{\tilde{q}}_{1x}(0)=\dot{\tilde{q}}_{4x}(0), \qquad \dot{\tilde{q}}_{2x}(0)= \dot{\tilde{q}}_{3x}(0).  \]
By the symmetry, we have
\[\dot{\tilde{q}}_{1}(t) =  -\dot{\tilde{q}}_{4}(t) , \, \quad \,  \dot{\tilde{q}}_{2}(t) =  -\dot{\tilde{q}}_{3}(t) , \, \quad  \forall \, \,  t \in [0,1). \]
It follows that
\begin{equation}\label{velat0}
 \dot{\tilde{q}}_{1x}(0)=\dot{\tilde{q}}_{4x}(0)=\dot{\tilde{q}}_{2x}(0)= \dot{\tilde{q}}_{3x}(0)=0.
 \end{equation}
We then define the extension of $\tilde{q}(t) \, (t \in [0,1])$ as follows:
\begin{equation}\label{extqat0}
\tilde{q}(t)= \begin{cases}
(\tilde{q}_1(t)^T, \, \tilde{q}_2(t)^T,\, \tilde{q}_3(t)^T,\, \tilde{q}_4(t)^T)^T, &  t\in [0,1]; \\
\\
(\tilde{q}_1(-t)^T, \, \tilde{q}_2(-t)^T,\, \tilde{q}_3(-t)^T,\, \tilde{q}_4(-t)^T)^T B, & t \in [-1, 0],
\end{cases}
\end{equation}
where $B= \begin{bmatrix}
1  & 0 \\
0  & -1
\end{bmatrix}$. It is clear that the definition of $\tilde{q}(t)$ in \eqref{extqat0} is smooth at $t=0$. By the uniqueness of solutions of ODE system, $\tilde{q}(t)$ in \eqref{extqat0} is the extension of $\tilde{q}(t) \, (t \in [0,1])$. The proof is complete.
\end{proof}

\begin{lemma}\label{extforat1}
If $\tilde{q}(1)$ has no collision, $\tilde{q}(t) \, (t \in [0,1])$ can be smoothly extended to $t \in [0, 2)$.
\end{lemma}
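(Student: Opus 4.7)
The plan is to mirror Lemma~\ref{extforat0}: at the collision-free boundary $t=1$ I would use the free-boundary first-variation conditions (one per parameter defining $V_1(\theta)$) to constrain $\dot{\tilde q}(1)$, then define an explicit extension on $[1,2)$ by reflecting the path $\tilde q(2-t)$ across the long axis of the rectangle $\tilde q(1)$ together with an appropriate permutation of body labels, and finally appeal to ODE uniqueness. The new ingredient over Lemma~\ref{extforat0} is that the rectangular configuration at $t=1$ is not pointwise fixed under its own symmetries: reflection across the long axis of the rectangle swaps bodies $1\leftrightarrow 2$ and $3\leftrightarrow 4$, so the extension formula must include this permutation on top of the reflection and time reversal.

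Concretely, the no-collision assumption makes both $b_{11},b_{21}>0$ strict, so stationarity in $b_1$ and $b_2$ gives two equalities. Combining these two scalar conditions with the parallelogram identities $\dot{\tilde q}_1=-\dot{\tilde q}_4$ and $\dot{\tilde q}_2=-\dot{\tilde q}_3$, I would show they are equivalent to the single matrix identity $\dot{\tilde q}_1(1)+\dot{\tilde q}_2(1)\,M_\theta = 0$, where $M_\theta := R(-\theta)\,B\,R(\theta)$ is the reflection across the rotated $x$-axis (with $B$ as in the proof of Lemma~\ref{extforat0}). I would then define the extension by
\[
\tilde q(t) := \begin{bmatrix}
\tilde q_2(2-t) \\ \tilde q_1(2-t) \\ \tilde q_4(2-t) \\ \tilde q_3(2-t)
\end{bmatrix} M_\theta, \qquad t \in [1,2),
\]
and verify in order: (i) the extension remains in $V$, which is immediate from $\tilde q_1+\tilde q_4=0$ and $\tilde q_2+\tilde q_3=0$; (ii) continuity at $t=1$, which reduces to the identity $R(\theta)\,M_\theta = B\,R(\theta)$; (iii) $C^1$ matching at $t=1$, which is exactly the matrix identity above; (iv) that the extended path satisfies Newton's equations on $(1,2)$, because reflection across a line, the permutation $(1\,2)(3\,4)$ (valid since all four masses are equal), and time reversal $t\mapsto 2-t$ are each symmetries of the Newtonian vector field. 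Uniqueness of the Cauchy problem at $t=1$ then identifies the extension with the genuine continuation of $\tilde q$.

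The main obstacle will be the first step: carrying out the first variational formulas correctly in the presence of the rotation $R(\theta)$, exploiting the parallelogram identities to eliminate the indices $3$ and $4$, and then recognising the resulting pair of scalar equations as equivalent to the matrix identity $\dot{\tilde q}_1(1)+\dot{\tilde q}_2(1)\,M_\theta = 0$ which encodes smoothness across the reflected-and-permuted extension. Once this reformulation is obtained, the remaining verifications are direct analogues of the $t=0$ case handled in Lemma~\ref{extforat0}.
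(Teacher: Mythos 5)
Your proposal is correct and follows essentially the same route as the paper: the paper derives exactly the velocity relation $\dot{\tilde q}_1(1)=-\dot{\tilde q}_2(1)BR(2\theta)$ from the first variation together with the parallelogram identities, and defines the extension by $\tilde q(t)=(\tilde q_2(2-t)^T,\tilde q_1(2-t)^T,\tilde q_4(2-t)^T,\tilde q_3(2-t)^T)^T BR(2\theta)$ on $[1,2]$, concluding by $C^1$ matching and ODE uniqueness. Your matrix $M_\theta=R(-\theta)BR(\theta)$ equals $BR(2\theta)$, so the two arguments coincide.
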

\begin{proof}
Recall that at $t=1$, $\tilde{q}(1) \in V_1(\theta)$, where
\begin{equation}\label{boundary1}
V_1(\theta)= \left\{Q_e=\begin{bmatrix}
-b_1 & -b_2 \\
-b_1  & b_2 \\
b_1  & -b_2 \\
b_1  &  b_2
\end{bmatrix} R(\theta) \,  \Bigg| \, b_1 \geq 0, \, b_2 \geq 0 \right\}.
\end{equation}
By assumption, $\tilde{q}(t)$ is a classical solution in $(0,1]$. Furthermore, by the symmetry, we have
\begin{equation}\label{symmetryident}
\dot{\tilde{q}}_{1}(t) =  -\dot{\tilde{q}}_{4}(t) , \, \quad \,  \dot{\tilde{q}}_{2}(t) =  -\dot{\tilde{q}}_{3}(t) , \, \quad  \forall \, \,  t \in [0,1].
\end{equation}
 The symmetry identities \eqref{symmetryident} and the first variational formulas imply that

\begin{eqnarray}\label{velat1}
\dot{\tilde{q}}_{1x}(1) \cos \theta + \dot{\tilde{q}}_{1y}(1) \sin \theta &=& -\dot{\tilde{q}}_{2x}(1) \cos \theta - \dot{\tilde{q}}_{2y}(1) \sin \theta,\nonumber \\
\dot{\tilde{q}}_{3x}(1) \cos \theta + \dot{\tilde{q}}_{3y}(1) \sin \theta &=& -\dot{\tilde{q}}_{4x}(1) \cos \theta - \dot{\tilde{q}}_{4y}(1) \sin \theta,  \\
\dot{\tilde{q}}_{1x}(1) \sin \theta - \dot{\tilde{q}}_{1y}(1) \cos \theta &=& \dot{\tilde{q}}_{2x}(1) \sin \theta - \dot{\tilde{q}}_{2y}(1) \cos \theta, \nonumber \\
\dot{\tilde{q}}_{3x}(1) \sin \theta - \dot{\tilde{q}}_{3y}(1) \cos \theta &=& \dot{\tilde{q}}_{4x}(1) \sin \theta - \dot{\tilde{q}}_{4y}(1) \cos \theta. \nonumber
\end{eqnarray}
The matrix form of \eqref{velat1} is
\begin{equation}\label{velt1matrix}
\dot{\tilde{q}}_1(1)=  -\dot{\tilde{q}}_2(1)B R(2 \theta), \quad \dot{\tilde{q}}_4(1)=  -\dot{\tilde{q}}_3(1)B R(2 \theta),
\end{equation}
where $B=\begin{bmatrix}
1  &  0\\
0  &  -1
\end{bmatrix}$ and $R(2\theta)= \begin{bmatrix}
\cos (2 \theta)  &  \sin(2 \theta) \\
-\sin (2 \theta)  &  \cos(2 \theta)
\end{bmatrix}$.
Then we can extend $\tilde{q}(t) \, (t \in [0,1])$ as follows:
\begin{equation}\label{extqat1}
\tilde{q}(t)= \begin{cases}
(\tilde{q}_1(t)^T, \, \tilde{q}_2(t)^T,\, \tilde{q}_3(t)^T,\, \tilde{q}_4(t)^T)^T, &  t\in [0,1]; \\
\\
\left(\tilde{q}_2(2-t)^T, \, \tilde{q}_1(2-t)^T,\, \tilde{q}_4(2-t)^T,\, \tilde{q}_3(2-t)^T \right)^T B R(2 \theta), & t \in [1, 2].
\end{cases}
\end{equation}
It is easy to check that $\tilde{q}(t)$ in \eqref{extqat1} is $C^1$ at $t=1$. Hence, by the uniqueness of solutions of ODE system,  $\tilde{q}(t)$ in \eqref{extqat1} is the extension of $\tilde{q}(t) \, (t \in [0,1])$. The proof is complete.
\end{proof}

\section{Lower bound of action of paths with boundary collisions}\label{lowerbddcol}
In Section \ref{coercivitysection}, it is shown that, for each given $\theta \in (0, \frac{\pi}{2})$, there exists a minimizing path $\tilde{q}_{\theta} := \tilde{q}    \in \Sigma(\theta)$. In this section, we give a lower bound of $\mathcal{A}_m=\mathcal{A}(\tilde{q}) = \inf_{q \in \Sigma(\theta)}\mathcal{A}(q)$ when $\tilde{q}(t)=\tilde{q}_{\theta}(t)$ has boundary collisions on $\tilde{q}(0)$ or $\tilde{q}(1)$. We apply a binary decomposition method \cite{CH1, CH2, Zhang} to estimate the lower bound. For readers' convenience, we introduce the estimates of Keplerian action \cite{CH2, GOR} first.

Given $\theta \in (0, \pi]$, $T>0$, consider the following path spaces:
\begin{align*}
\Gamma_{T, \theta} := &\left\{\vec{r} \in H^1([0,T], \mathbb{R}^2):  \, \, \langle\vec{r}(0), \, \vec{r}(T)\rangle= |\vec{r}(0)||\vec{r}(T)| \cos \theta  \right\},  \\
\Gamma^{*}_{T, \theta}:=& \left\{ \vec{r} \in \Gamma_{T, \theta}: \,   |\vec{r}(t)|=0 \, \,  \, \text{for some}  \, \, t \in [0,T]\right\}.
\end{align*}
The symbol $\langle\cdot, \cdot\rangle$ stands for the standard scalar product in $\mathbb{R}^2$ and $|\cdot |$ represents the standard norm in $\mathbb{R}^2$. Define the Keplerian action functional $I_{\mu, \alpha, T}: H^1([0,T], \mathbb{R}^2) \to \mathbb{R}\cup \{+\infty\}$ by
\[ I_{\mu, \alpha, T}(\vec{r}) := \int_0^{T} \frac{\mu}{2} |\dot{\vec{r}}|^2 + \frac{\alpha}{|\vec{r}|} \, dt. \]
\begin{theorem}\label{keplerestimate}
Let $\theta \in (0, \pi]$, $T>0$, $\mu>0$, $\alpha>0$ be constants. Then
\begin{equation}\label{keplernocollision}
\inf_{\vec{r} \in \Gamma_{T, \theta}}  I_{\mu, \alpha, T}(\vec{r}) = \frac{3}{2}\left(  \mu \alpha^2 \theta^2 T\right)^{\frac{1}{3}},
\end{equation}
\begin{equation}\label{keplercollision}
\inf_{\vec{r} \in \Gamma^{*}_{T, \theta}}  I_{\mu, \alpha, T}(\vec{r}) = \frac{3}{2}\left(  \mu \alpha^2 \pi^2 T\right)^{\frac{1}{3}}.
\end{equation}
\end{theorem}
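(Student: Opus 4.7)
The plan is to prove Theorem \ref{keplerestimate} in three stages: a scaling reduction to a parameter-free problem, a classification-based argument for the collision-free infimum, and a decomposition argument for the collision case.

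First I would eliminate the parameters by the change of variables $\vec{r}(t) = c\,\vec{\rho}(t/T)$ with $c = (\alpha T^{2}/\mu)^{1/3}$, which yields
\[
I_{\mu,\alpha,T}(\vec{r}) = (\mu\alpha^{2} T)^{1/3}\, I_{1,1,1}(\vec{\rho}).
\]
Since the angular endpoint condition and the collision condition are both scale-invariant, the theorem reduces to showing $\inf_{\Gamma_{1,\theta}} I_{1,1,1} = \frac{3}{2}\theta^{2/3}$ and $\inf_{\Gamma^{*}_{1,\theta}} I_{1,1,1} = \frac{3}{2}\pi^{2/3}$.

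For \eqref{keplernocollision}, the upper bound is witnessed by the circular Kepler orbit of radius $r_{0} = \theta^{-2/3}$: by Kepler's third law this arc sweeps the angle $\theta$ in unit time, and its constant Lagrangian $K + U = 3/(2 r_{0}) = \frac{3}{2}\theta^{2/3}$ integrates to the claimed value. For the matching lower bound, existence of a minimizer follows by a variational argument parallel to Section \ref{coercivitysection} (coercivity comes from the fact that a shrinking endpoint radius makes the potential blow up), Marchal's lemma rules out interior collisions, and transversality with respect to the free boundary radii forces $\dot r(0) = \dot r(1) = 0$. Both endpoints therefore sit at apsides of the underlying conic, and a short classification using the angular sweep $\theta \in (0,\pi]$ and the unit-time constraint pins the minimizer down to the circular arc above when $\theta < \pi$, or to a one-parameter family of half-ellipses of semi-major axis $\pi^{-2/3}$ when $\theta = \pi$; in either subcase the action equals $\frac{3}{2}\theta^{2/3}$.

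For \eqref{keplercollision}, any path in $\Gamma^{*}_{1,\theta}$ passes through the origin at some interior time $t_{0}$, and the angular boundary constraint decouples across this singularity because the incoming and outgoing directions at the origin can be chosen independently. The problem reduces to minimizing the sum of the actions of two radial (collision) Kepler arcs of total time $1$, each contributing $\frac{3}{2}\pi^{2/3}\,t^{1/3}$ as a closed-form function of its duration $t$ (via energy conservation and the Kepler substitution $x = 2a\sin^{2}(\psi/2)$). The total $\frac{3}{2}\pi^{2/3}\bigl(t_{0}^{1/3} + (1-t_{0})^{1/3}\bigr)$ is strictly concave in $t_{0}$, so its infimum on $(0,1)$ is approached only in the limits $t_{0} \to 0^{+}$ and $t_{0} \to 1^{-}$ and equals $\frac{3}{2}\pi^{2/3}$. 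The main obstacle is the careful derivation of the transversality conditions at the free-radius endpoints and the subsequent classification of Kepler critical points, in particular ruling out non-circular elliptical arcs as minimizers when $\theta < \pi$; the cleanest route for handling the singularity in the collision case is through Levi-Civita regularization of the radial trajectory.
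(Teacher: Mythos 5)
The paper does not prove Theorem \ref{keplerestimate} at all: it is imported verbatim from the literature, with \cite{GOR} and \cite{CH2} cited as the source of ``the estimates of Keplerian action.'' Your sketch is, in substance, the standard proof of that quoted result (Gordon's lemma in the free--boundary-radius form used by Chen and Lin): the scaling reduction to $(\mu\alpha^{2}T)^{1/3}I_{1,1,1}$ is computed correctly, the circular arc gives the right upper bound $\tfrac{3}{2}\theta^{2/3}$, the transversality conditions $\dot r(0)=\dot r(T)=0$ are the correct first-variation consequence of leaving the endpoint radii free, and the classification (a collision-free minimizer with $\theta>0$ has nonzero angular momentum, apsides of a noncircular Kepler arc are separated by angular increments of $\pi$, hence a sweep in $(0,\pi)$ between two apsides forces a circle, while $\theta=\pi$ admits the degenerate family of half-ellipses of equal action) is exactly the content of the cited references. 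The decomposition of a collision path into two arcs, each bounded below by $\tfrac{3}{2}\pi^{2/3}t^{1/3}$, with the concavity of $t_{0}^{1/3}+(1-t_{0})^{1/3}$ forcing the infimum to the boundary, is likewise the standard route to \eqref{keplercollision}. So there is nothing to compare against internally; your outline is a correct reconstruction of the external proof the authors rely on.

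Two small inaccuracies are worth fixing. First, your parenthetical reason for coercivity is wrong: coercivity concerns $\|\vec r\|_{H^1}\to\infty$, not shrinking radii, and the correct mechanism is the one in Section \ref{coercivitysection} of the paper --- since the endpoints are separated by the fixed angle $\theta>0$, one has $|\vec r(0)-\vec r(T)|\geq \sqrt{1-\cos\theta}\,|\vec r(0)|$, so a uniformly large configuration forces a large displacement and hence large kinetic action. Second, in the collision case the collision time is allowed to lie in $[0,T]$, not only in the interior; this is not an obstacle but a simplification, because the ejection arc with $\vec r(0)=0$ satisfies the angular constraint vacuously, lies in $\Gamma^{*}_{T,\theta}$, and realizes the value $\tfrac{3}{2}(\mu\alpha^{2}\pi^{2}T)^{1/3}$ exactly, so no limiting sequence $t_{0}\to 0^{+}$ is needed for the upper bound. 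With those two points repaired, the argument is complete modulo the classification of apsidal Kepler arcs, which you correctly flag as the real work and which is carried out in \cite{GOR} and \cite{CH2}.
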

Let the mass of the four bodies be $[m_1, \, m_2, \, m_3, \, m_4]=[1 , \, 1, \, 1, \, 1]$. By symmetry, the action functional $\mathcal{A}: H^1([0,1], V) \to \mathbb{R}\cup \{+\infty\}$ is defined by
\[ \mathcal{A}(q):= \int_0^1 K(\dot{q})+ U(q) \, dt,\]
where
\[  K(\dot{q}) = \frac{1}{2} \left(|\dot{q}_1|^2+|\dot{q}_2|^2+|\dot{q}_3|^2+|\dot{q}_4|^2 \right)= |\dot{q}_1|^2+ |\dot{q}_2|^2 \]
is the kinetic energy of the path $q=q(t)= \begin{bmatrix}
q_{1}(t) \\
q_{2}(t) \\
q_{3}(t) \\
q_{4}(t)
\end{bmatrix}$, and
\begin{eqnarray*}
 U(q) &=& \sum_{1 \le i <j \le 4} \frac{1}{|q_i-q_j|}\\
  &=& \frac{2}{|q_1-q_2|}+ \frac{2}{|q_1+q_2|}+ \frac{1}{|2q_1|}+ \frac{1}{|2q_2|}
\end{eqnarray*}
is the potential energy. The action functional $\mathcal{A}(q)$ can be rewritten as
\begin{equation}\label{actioninkeplerform}
\begin{split}
\mathcal{A}(q)=& \int_0^1 \frac{1}{4} |\dot{q}_1 -\dot{q}_2 |^2 + \frac{2}{|q_1-q_2|} \, dt+ \int_0^1 \frac{1}{4} |\dot{q}_1 +\dot{q}_2 |^2 + \frac{2}{|q_1+q_2|} \, dt \\
& + \int_0^1 \frac{1}{2} |\dot{q}_1|^2 + \frac{1}{2|q_1|} \, dt + \int_0^1 \frac{1}{2} |\dot{q}_2|^2 + \frac{1}{2|q_2|} \, dt.
\end{split}
\end{equation}
Let $\mathcal{A}_{12}:= \int_0^1 \frac{1}{4} |\dot{q}_1-\dot{q}_2|^2+ \frac{2}{|q_1-q_2|} \, dt$, $\mathcal{A}_{13}:= \int_0^1 \frac{1}{4} |\dot{q}_1+\dot{q}_2|^2+ \frac{2}{|q_1+q_2|} \, dt$, $\mathcal{A}_{14}:=\int_0^1 \frac{1}{2} |\dot{q}_1|^2 + \frac{1}{2|q_1|} \, dt$ and $\mathcal{A}_{23}:=\int_0^1 \frac{1}{2} |\dot{q}_2|^2 + \frac{1}{2|q_2|} \, dt$. It follows that
\begin{equation}\label{formulaofaction}
\mathcal{A}(q)=\mathcal{A}_{12}+ \mathcal{A}_{13}+ \mathcal{A}_{14}+ \mathcal{A}_{23}.
\end{equation}
Actually, if $\tilde{q}(t)$ has a total collision at $t=0$ or $t=1$,  by Theorem \ref{keplernocollision}, we have
\begin{equation}\label{estimatetotalcollision}
\mathcal{A}_m= \mathcal{A}(\tilde{q}) \geq 3 \left( 2 \pi^2 \right)^{\frac{1}{3}}+ 3 \left(\frac{\pi^2}{4}\right)^{\frac{1}{3}}  \geq 12.16.
\end{equation}
 In what follows, we give a lower bound of action when $\tilde{q}(t) \, (t \in [0,1])$ has collision singularities in the case when $\theta \in (0, \frac{\pi}{6})$.
\begin{lemma}\label{actionestimatelowerbdd}
Let $\theta \in (0,\,  \pi/6)$. If the minimizing path $\tilde{q}(t)=\tilde{q}_{\theta}(t) \, (t \in [0,1])$ has boundary collisions, then its action $\mathcal{A}=\mathcal{A}(\tilde{q})$ satisfies
\[  \mathcal{A} \geq  \frac{3}{2} \left[ \left( 2\pi^2 \right)^{\frac{1}{3}} + \left( 2\theta^2 \right)^{\frac{1}{3}}+ \theta^{\frac{2}{3}}\right]. \]
\end{lemma}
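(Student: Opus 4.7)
The plan is to apply the binary decomposition
\[ \mathcal{A}(\tilde q)=\mathcal{A}_{12}+\mathcal{A}_{13}+\mathcal{A}_{14}+\mathcal{A}_{23} \]
from \eqref{formulaofaction} and bound each Keplerian summand via Theorem \ref{keplerestimate}, according to whether the corresponding relative coordinate has an endpoint collision. Writing $(b_1,b_2)=r(\cos\alpha,\sin\alpha)$ with $\alpha\in[0,\pi/2]$, I first compute from the explicit forms of $V_0$ and $V_1(\theta)$ that when the endpoints are non-zero, the angles $\theta_{ij}\in[0,\pi]$ swept by the four relative coordinates $q_1-q_2$, $q_1+q_2$, $q_1$, $q_2$ are
\[ \theta_{12}=\tfrac{\pi}{2}+\theta,\quad \theta_{13}=\theta,\quad \theta_{14}=\alpha+\theta,\quad \theta_{23}=|\alpha-\theta|, \]
so in particular $\theta_{14}+\theta_{23}\ge 2\theta$.

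For the outer pair, I claim $\mathcal{A}_{14}+\mathcal{A}_{23}\ge\tfrac{3}{2}\theta^{2/3}$ in every situation. If neither has an endpoint collision, Theorem \ref{keplerestimate} (with $\mu=1$, $\alpha=1/2$) together with the concavity of $x\mapsto x^{2/3}$ (so that $a^{2/3}+b^{2/3}\ge(a+b)^{2/3}$) yields
\[ \mathcal{A}_{14}+\mathcal{A}_{23}\ge \tfrac{3}{2\cdot 4^{1/3}}\bigl(\theta_{14}^{2/3}+\theta_{23}^{2/3}\bigr) \ge \tfrac{3}{2\cdot 4^{1/3}}(2\theta)^{2/3} = \tfrac{3}{2}\theta^{2/3}. \]
If one of them has a collision, the collision estimate alone provides $\tfrac{3}{2}(\pi^2/4)^{1/3}$, which already exceeds $\tfrac{3}{2}\theta^{2/3}$ for $\theta<\pi/2$.

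For the inner pair, I enumerate the possible boundary-collision configurations in $V_0\cup V_1(\theta)$. In each of the three patterns $a_2=0$, $b_2=0$, or $b_1=0$, exactly one of $\mathcal{A}_{12},\mathcal{A}_{13}$ has a collision and contributes $\ge\tfrac{3}{2}(2\pi^2)^{1/3}$, while the other sweeps an angle $\ge\theta$ and contributes $\ge\tfrac{3}{2}(2\theta^2)^{1/3}$; combined with the outer-pair bound above, this already yields the lemma. Total collisions ($a_1=a_2=0$ or $b_1=b_2=0$) and all superpositions of the above produce strictly larger estimates on at least two summands and are handled analogously.

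The main obstacle is the remaining case $a_1=0$ at $t=0$ with $a_2>0$ (so $q_2(0)=q_3(0)=0$), in which $\mathcal{A}_{23}$ has a collision but neither $\mathcal{A}_{12}$ nor $\mathcal{A}_{13}$ does. Here I keep the collision contribution from $\mathcal{A}_{23}$ separately, use $\mathcal{A}_{12}\ge\tfrac{3}{2}(2(\pi/2+\theta)^2)^{1/3}$ (from $\theta_{12}=\pi/2+\theta$) and $\mathcal{A}_{13}\ge\tfrac{3}{2}(2\theta^2)^{1/3}$, and reduce the target to the scalar inequality
\[ (2(\pi/2+\theta)^2)^{1/3}+(\pi^2/4)^{1/3}\ge (2\pi^2)^{1/3}+\theta^{2/3}. \]
I would verify this by observing that the two sides coincide at $\theta=\pi/2$ (using $4^{1/3}=2^{2/3}$), and that the derivative of the difference equals $\tfrac{2}{3}\bigl[(2\theta)^{1/3}-(\pi/2+\theta)^{1/3}\bigr]/\bigl[\theta^{1/3}(\pi/2+\theta)^{1/3}\bigr]$, which is strictly negative on $(0,\pi/2)$. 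Hence the inequality is strict on $(0,\pi/6)\subset(0,\pi/2)$, completing this delicate case and the proof.
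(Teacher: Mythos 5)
Your proof is correct and follows the paper's strategy in all essentials: the same decomposition $\mathcal{A}=\mathcal{A}_{12}+\mathcal{A}_{13}+\mathcal{A}_{14}+\mathcal{A}_{23}$, the same Keplerian lower bounds from Theorem \ref{keplerestimate}, the same swept angles $\tfrac{\pi}{2}+\theta$, $\theta$, $\theta+\alpha$, $|\theta-\alpha|$, the same concavity trick for the outer pair, and the same four-case analysis of boundary collisions. The one place you genuinely diverge is the collision $\tilde q_2(0)=\tilde q_3(0)$ (the paper's Case 2). There the paper assumes $t=1$ is collision-free, invokes the extension formula of Lemma \ref{extforat1} to reflect the path across $t=1$, and treats $\mathcal{A}_{23}+\mathcal{A}_{14}$ as a single doubled Kepler problem with a collision, obtaining the stronger bound $\tfrac{3}{2}(\pi^2/2)^{1/3}$; you instead use only the one-endpoint collision bound $\mathcal{A}_{23}\ge\tfrac{3}{2}(\pi^2/4)^{1/3}$, throw away $\mathcal{A}_{14}$, and close the resulting gap with the monotonicity argument for $(2(\pi/2+\theta)^2)^{1/3}+(\pi^2/4)^{1/3}-(2\pi^2)^{1/3}-\theta^{2/3}$, whose derivative computation and sign analysis on $(0,\pi/2)$ are correct. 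Your variant buys independence from the extension lemma (and from the auxiliary ``no collision at $t=1$'' hypothesis) at the cost of a slightly weaker intermediate estimate that happens to still suffice; the paper's reflection trick gives more room but imports Section \ref{extension01}. Both routes land on the same minimal bound, attained in the case $\tilde q_1(0)=\tilde q_2(0)$, exactly as stated in the lemma.
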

\begin{proof}
We recall the definition of $\tilde{q}(0)$ and $\tilde{q}(1)$:
\begin{equation}\label{q0q1}
\tilde{q}(0)= \begin{bmatrix}
-a_{11}-a_{21} & 0 \\
-a_{11}  & 0 \\
a_{11}  & 0 \\
a_{11}+a_{21}  &  0
\end{bmatrix}, \quad \tilde{q}(1)=\begin{bmatrix}
-b_{11} & -b_{21} \\
-b_{11}  & b_{21} \\
b_{11}  & -b_{21} \\
b_{11}  &  b_{21}
\end{bmatrix} R(\theta),
\end{equation}
where $a_{11}, a_{21}, b_{11}, b_{21} \ge 0$. Note that for all $t \in [0,1]$, $\tilde{q}_1(t)=-\tilde{q}_4(t)$ and $\tilde{q}_2(t)=-\tilde{q}_3(t)$. The possible boundary collisions are
\begin{enumerate}
\item At \, $t=0$: \quad $\tilde{q}_1=\tilde{q}_2$; \, \,  $\tilde{q}_2=\tilde{q}_3$; \, \,  $\tilde{q}_1=\tilde{q}_2=\tilde{q}_3=\tilde{q}_4$;
\item At \, $t=1$: \quad $\tilde{q}_1=\tilde{q}_2$; \, \, $\tilde{q}_1=\tilde{q}_3$; \, \,  $\tilde{q}_1=\tilde{q}_2=\tilde{q}_3=\tilde{q}_4$.
\end{enumerate}
We discuss them case by case under the assumption: $\theta \in (0, \, \pi/6)$. Actually, we only need to consider possible binary collisions.
 \vspace{0.2in}

\textbf{Case 1:} $\tilde{q}_1$ and $\tilde{q}_2$ collide at $t=0$ $\left(\tilde{q}_1(0)=\tilde{q}_2(0) \right)$.\\

Assume that the angle from the vector $\overrightarrow{\tilde{q}_2(1) \tilde{q}_3(1)}= \tilde{q}_3(1)-\tilde{q}_2(1)$ counterclockwisely rotating to the vector $\overrightarrow{\tilde{q}_2(1) \tilde{q}_4(1)}= \tilde{q}_4(1)-\tilde{q}_2(1)$ is $\alpha$. By the definition of $\tilde{q}(1)$, it implies that $\alpha \in (0, \pi/2)$. By Theorem \ref{keplerestimate},
\[\mathcal{A}_{12} \geq \frac{3}{2} \left( \frac{1}{2} 4\pi^2 \right)^{\frac{1}{3}}= \frac{3}{2} \left( 2\pi^2 \right)^{\frac{1}{3}}.   \]
For $t \in [0,1]$, the vector $\overrightarrow{\tilde{q}_1 \tilde{q}_3}(t)= \tilde{q}_3(t)-\tilde{q}_1(t)$ rotates an angle $\theta$, by Theorem \ref{keplerestimate},
\[ \mathcal{A}_{13} \geq \frac{3}{2} \left( \frac{1}{2} 4\theta^2 \right)^{\frac{1}{3}}. \]
Similarly, we have
\begin{equation}
\mathcal{A}_{23} \geq \frac{3}{2} \left( \frac{1}{4} (\theta-\alpha)^2 \right)^{\frac{1}{3}}, \qquad \mathcal{A}_{14}  \geq \frac{3}{2} \left( \frac{1}{4} (\theta+\alpha)^2 \right)^{\frac{1}{3}}.
\end{equation}
Note that for the concave function $f(x)=x^{\frac{2}{3}}$, it follows that for any $a, b \in \mathbb{R}$, we have
\[a^{\frac{2}{3}}+ b^{\frac{2}{3}} \geq (|a|+|b|)^{\frac{2}{3}}\geq \max \left\{(a+b)^{\frac{2}{3}}, (a-b)^{\frac{2}{3}} \right\}. \]
It follows that
\begin{eqnarray}\label{lowerbddest1}
\mathcal{A}&=& \mathcal{A}_{12}+ \mathcal{A}_{13}+ \mathcal{A}_{14} +\mathcal{A}_{23} \nonumber \\
&\ge & \frac{3}{2} \left[ \left( 2\pi^2 \right)^{\frac{1}{3}} + \left( 2\theta^2 \right)^{\frac{1}{3}}+ \left( \frac{1}{4} (\theta-\alpha)^2 \right)^{\frac{1}{3}} + \left( \frac{1}{4} (\theta+\alpha)^2 \right)^{\frac{1}{3}}\right]  \nonumber \\
& \ge & \frac{3}{2} \left[ \left( 2\pi^2 \right)^{\frac{1}{3}} + \left( 2\theta^2 \right)^{\frac{1}{3}}+ \theta^{\frac{2}{3}}\right].
\end{eqnarray}
\vspace{0.2in}
\textbf{Case 2:} $\tilde{q}_2$ and $\tilde{q}_3$ collide at $t=0$ $\left(\tilde{q}_2(0)=\tilde{q}_3(0) \right)$, and $\tilde{q}(t)$ has no collision at $t=1$.\\
By the extension formula \eqref{extqat1} in Lemma \ref{extforat1}, $\tilde{q}_1(2)= \tilde{q}_4(2)$. Then by Theorem \ref{keplerestimate},
\[ 2(\mathcal{A}_{23}+ \mathcal{A}_{14}) \ge 3 \left( \frac{1}{4} 2 \pi^2 \right)^{\frac{1}{3}}= \frac{3}{2}  \left( 4 \pi^2\right)^{\frac{1}{3}}. \]

For $t \in [0,1]$, the vector $\overrightarrow{\tilde{q}_1 \tilde{q}_2}(t)$ rotates an angle $\theta+ \pi/2$, while the vector $\overrightarrow{\tilde{q}_1 \tilde{q}_3}(t)$ rotates an angle $\theta$. By Theorem \ref{keplerestimate}, the following estimates hold:
\[ \mathcal{A}_{12} \ge  \frac{3}{2} \left( 2 (\theta+\frac{\pi}{2})^2 \right)^{\frac{1}{3}}, \qquad \mathcal{A}_{13} \ge  \frac{3}{2} \left( 2 \theta^2 \right)^{\frac{1}{3}}.\]
Hence, in this case, the action $\mathcal{A}$ satisfies
\begin{eqnarray}\label{lowerbddest2}
\mathcal{A}&=& \mathcal{A}_{12}+ \mathcal{A}_{13}+ \mathcal{A}_{14} +\mathcal{A}_{23} \nonumber \\
& \ge & \frac{3}{2} \left[ \left( \frac{\pi^2}{2} \right)^{\frac{1}{3}} + \left( 2\theta^2 \right)^{\frac{1}{3}}+ \left( 2\left(\theta + \frac{\pi}{2}\right)^2 \right)^{\frac{1}{3}}\right].
\end{eqnarray}
\vspace{0.2in}
\textbf{Case 3:} $\tilde{q}_1$ and $\tilde{q}_2$ collide at $t=1$ $\left(\tilde{q}_1(1)=\tilde{q}_2(1) \right)$. \\
By Theorem \ref{keplerestimate}, the action $\mathcal{A}$ satisfies
\begin{eqnarray}\label{lowerbddest3}
\mathcal{A}&=& \mathcal{A}_{12}+ \mathcal{A}_{13}+ \mathcal{A}_{14} +\mathcal{A}_{23} \nonumber \\
& \ge & \frac{3}{2} \left[ \left( 2\pi^2 \right)^{\frac{1}{3}} + \left( 2\theta^2 \right)^{\frac{1}{3}}+ \left( \frac{1}{4}\theta^2 \right)^{\frac{1}{3}}+ \left( \frac{1}{4}\theta^2 \right)^{\frac{1}{3}}\right] \nonumber\\
&= &  \frac{3}{2} \left[ \left( 2\pi^2 \right)^{\frac{1}{3}} + 2 \left( 2\theta^2 \right)^{\frac{1}{3}}    \right].
\end{eqnarray}
\vspace{0.2in}
\textbf{Case 4:} $\tilde{q}_1$ and $\tilde{q}_3$ collide at $t=1$ $\left(\tilde{q}_1(1)=\tilde{q}_3(1) \right)$. \\
By Theorem \ref{keplerestimate}, the action $\mathcal{A}$ satisfies
\begin{eqnarray}\label{lowerbddest4}
\mathcal{A} &=& \mathcal{A}_{13}+ \mathcal{A}_{12}+ \mathcal{A}_{14} +\mathcal{A}_{23} \nonumber \\
& \ge & \frac{3}{2} \left[ \left( 2\pi^2 \right)^{\frac{1}{3}} + \left( 2\left( \theta + \frac{\pi}{2} \right)^2 \right)^{\frac{1}{3}}+ \left( \frac{1}{4}\left( \theta + \frac{\pi}{2} \right)^2 \right)^{\frac{1}{3}}+ \left( \frac{1}{4}\left(  \frac{\pi}{2}- \theta  \right)^2 \right)^{\frac{1}{3}}\right] \nonumber\\
&\ge &  \frac{3}{2} \left[ \left( 2\pi^2 \right)^{\frac{1}{3}} + 3 \left( \frac{1}{4}\left( \theta + \frac{\pi}{2} \right)^2 \right)^{\frac{1}{3}}  + \left( \frac{1}{4}\left(  \frac{\pi}{2}- \theta  \right)^2 \right)^{\frac{1}{3}} \right].
\end{eqnarray}
Note that $\theta \in (0, \frac{\pi}{6})$, by comparing the lower bounds (\eqref{lowerbddest1} to \eqref{lowerbddest4}) in the four cases, the smallest value is $\frac{3}{2} \left[ \left( 2\pi^2 \right)^{\frac{1}{3}} + \left( 2\theta^2 \right)^{\frac{1}{3}}+ \theta^{\frac{2}{3}}\right]$. It follows that
\begin{eqnarray}\label{lowerbddofaction}
\mathcal{A} &\ge& \frac{3}{2} \left[ \left( 2\pi^2 \right)^{\frac{1}{3}} + \left( 2\theta^2 \right)^{\frac{1}{3}}+ \theta^{\frac{2}{3}}\right].
\end{eqnarray}
The proof is complete.
\end{proof}

\section{Definition of test paths}\label{testpathdef}
In Section \ref{lowerbddcol}, we prove that if the minimizing path $\tilde{q}(t) \, (t \in [0,1])$ has boundary collisions, its action satisfies
\[\mathcal{A}=\mathcal{A}(\tilde{q})  \ge \frac{3}{2} \left[ \left( 2\pi^2 \right)^{\frac{1}{3}} + \left( 2\theta^2 \right)^{\frac{1}{3}}+ \theta^{\frac{2}{3}}\right]. \]
Let
\[ g_1(\theta)= \frac{3}{2} \left[ \left( 2\pi^2 \right)^{\frac{1}{3}} + \left( 2\theta^2 \right)^{\frac{1}{3}}+ \theta^{\frac{2}{3}}\right]. \]
In this section, we define a test path $\mathcal{P}_{test}= \mathcal{P}_{test, \theta}$ for each $\theta \in (0, \pi/7]$, such that its action $\mathcal{A}(\mathcal{P}_{test})$ is strictly less than $g_1(\theta)$.
\begin{lemma}\label{testpathdef}
For each $\theta \in (0, \pi/7]$, there exists a test path
\[\mathcal{P}_{test}= \mathcal{P}_{test, \theta} \in \Sigma(\theta) \equiv \left\{q(t)\in H^1([0,1],V) \, \bigg| \,  q(0)\in V_0, \, q(1)\in V_1(\theta) \right\}, \] such that
\[ \mathcal{A} (\mathcal{P}_{test}) < g_1(\theta).  \]
\end{lemma}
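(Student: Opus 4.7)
The plan is to construct an explicit test path $\mathcal{P}_{test}\in\Sigma(\theta)$ whose action is strictly less than $g_1(\theta)$. The intuition is that a path with a boundary collision $\tilde{q}_1(0)=\tilde{q}_2(0)$ forces the intra-pair vector $v=q_1-q_2$ to sweep through an angle of nearly $\pi$ (ejection--collision), while a collision-free path only needs $v$ to rotate by the much smaller angle $\pi/2+\theta$; this saved angular sweep supplies the slack.

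Working in the Jacobi-like coordinates $u=q_1+q_2$, $v=q_1-q_2$ (so that $q_1=(u+v)/2$, $q_2=(u-v)/2$, $q_3=-q_2$, $q_4=-q_1$), I would take for $\mathcal{P}_{test}$ the uniform prograde rigid rotations
\[
 u(t)=r_u\bigl(\cos(\pi+\theta t),\,\sin(\pi+\theta t)\bigr),\qquad v(t)=r_v\bigl(\cos(\pi+(\tfrac{\pi}{2}+\theta)t),\,\sin(\pi+(\tfrac{\pi}{2}+\theta)t)\bigr),
\]
with radii $r_u\ge r_v>0$ to be optimized. A direct check shows $\mathcal{P}_{test}(0)\in V_0$ (with $a_1=(r_u-r_v)/2$, $a_2=r_v$) and $\mathcal{P}_{test}(1)\in V_1(\theta)$ (with $b_1=r_u/2$, $b_2=r_v/2$), so $\mathcal{P}_{test}\in\Sigma(\theta)$. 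Using the decomposition \eqref{formulaofaction} together with $|\dot{q}_1|^2+|\dot{q}_2|^2=\tfrac{1}{2}(|\dot{u}|^2+|\dot{v}|^2)$ and $|u\pm v|^2=r_u^2\pm 2r_ur_v\cos(\pi t/2)+r_v^2$, the total action takes the closed form
\[
 \mathcal{A}(\mathcal{P}_{test})=\frac{r_u^2\theta^2+r_v^2(\tfrac{\pi}{2}+\theta)^2}{2}+\frac{2}{r_u}+\frac{2}{r_v}+\int_0^1\!\!\Bigl[\frac{1}{|u+v|}+\frac{1}{|u-v|}\Bigr]\,dt.
\]

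I would then upper-bound the residual cross-potential integral using the Taylor expansion $\int_0^1\bigl[|u+v|^{-1}+|u-v|^{-1}\bigr]dt=\tfrac{2}{r_u}+\tfrac{r_v^2}{2r_u^3}+O(r_v^4/r_u^5)$ (which follows from $(1\pm y)^{-1/2}$ and the identity $\int_0^1\cos^2(\pi t/2)\,dt=\tfrac{1}{2}$), optimize jointly in $(r_u,r_v)$---the natural Keplerian choice being $r_v^3=2/(\tfrac{\pi}{2}+\theta)^2$ and $r_u^3=4/\theta^2$---and compare with $g_1(\theta)$. The leading-order upper bound thus obtained is $\tfrac{3}{2}\bigl[(4(\tfrac{\pi}{2}+\theta)^2)^{1/3}+(16\theta^2)^{1/3}\bigr]$, which at $\theta=0$ equals $\tfrac{3}{2}\pi^{2/3}\approx 3.22$ against $g_1(0)=\tfrac{3}{2}(2\pi^2)^{1/3}\approx 4.05$, a very comfortable gap. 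The main obstacle is that this gap shrinks rapidly in $\theta$ and essentially closes around $\theta=\pi/7$: evaluating the integral at the Keplerian-optimal radii places $\mathcal{A}(\mathcal{P}_{test})$ only a few thousandths below $g_1(\theta)$ there. The hard part is therefore to upper-bound the cross-potential integral precisely enough (in particular more sharply than the crude value $\tfrac{2r_u}{r_u^2-r_v^2}$ it attains at $t=0$) and, if necessary, to perturb $r_u,r_v$ slightly off the unconstrained Keplerian optima, so that the strict inequality $\mathcal{A}(\mathcal{P}_{test})<g_1(\theta)$ is maintained uniformly on $(0,\pi/7]$; the cutoff $\pi/7$ in the statement is expected to be exactly the value of $\theta$ at which this remaining room runs out.
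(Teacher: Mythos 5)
Your construction is sound as far as it goes, and it is a genuinely different route from the paper's. You build an explicit analytic family of test paths (uniform rigid rotations of $u=q_1+q_2$ through angle $\theta$ and of $v=q_1-q_2$ through angle $\pi/2+\theta$), verify the boundary conditions, and reduce the action to a closed form plus one cross-potential integral. The paper instead takes $\mathcal{P}_{test,\theta}$ to be a piecewise-linear interpolation through eleven numerically computed configurations of the actual minimizer (eight such paths, one per subinterval of $(0,\pi/7]$), evaluates its action \emph{exactly} via the elementary antiderivative \eqref{generalformulaint}, and checks $\mathcal{A}(\mathcal{P}_{test})<g_1(\theta)$ numerically/graphically on each subinterval. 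What your approach buys is self-containedness (no externally computed minimizer data) and a transparent mechanism for the gap: the collision path must let $v$ sweep $\pi$ while your path only sweeps $\pi/2+\theta$. What the paper's approach buys is slack: its test path tracks the true minimizer, so its action sits further below $g_1$ than a rigid rotation's does.

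The genuine gap is exactly the step you defer, and it is not a formality. Your leading-order bound $\tfrac{3}{2}\bigl[(16\theta^2)^{1/3}+(4(\tfrac{\pi}{2}+\theta)^2)^{1/3}\bigr]$ at $\theta=\pi/7$ is about $6.020$ against $g_1(\pi/7)\approx 6.041$, and the cross-potential correction at the Keplerian radii ($r_u\approx 2.71$, $r_v\approx 0.79$, so $r_v/r_u\approx 0.29$ is not small) contributes about $+0.016$, leaving a net margin of only about $0.004$ out of $6.04$. Any admissible upper bound on $\int_0^1\bigl[|u+v|^{-1}+|u-v|^{-1}\bigr]dt$ must therefore have error well under $10^{-3}$ in relative terms: bounding the integrand by its maximum (its $t=0$ value $\approx 0.807$ versus the true integral $\approx 0.755$) destroys the inequality outright, and even the midpoint-of-chord bound $\tfrac12[f(0)+f(1)]\approx 0.758$ (which itself needs a convexity justification in the variable $\cos^2(\pi t/2)$) leaves only about $10^{-3}$ of room. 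Moreover the family really does fail just beyond the endpoint --- at $\theta=0.15\pi$ the leading term alone already exceeds $g_1$ --- so there is no slack to absorb a lossy estimate. Until you supply a rigorous, quantitative upper bound for that integral valid up to $\theta=\pi/7$ (or switch to a better test path on the upper part of the interval), the lemma is established by your argument only for $\theta$ bounded away from $\pi/7$.
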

\begin{proof}
The test paths are defined by piecewise smooth linear functions. For a given $\theta \in (0, \pi/7]$, let $\bar{q}(t) \, (t \in [0,1])$ be the postion matrix of the test path $\mathcal{P}_{test}= \mathcal{P}_{test, \theta}$. By symmetry, we only define the paths for $\bar{q}_1$ and $\bar{q}_2$, while $\bar{q}_3=-\bar{q}_2$ and $\bar{q}_4=-\bar{q}_1$. The action functional satisfies
\[\mathcal{A}(\bar{q})= \int_0^1 |\dot{\bar{q}}_1|^2+|\dot{\bar{q}}_2|^2+ \frac{2}{|\bar{q}_1-\bar{q}_2|}+ \frac{2}{|\bar{q}_1+\bar{q}_2|}+ \frac{1}{2|\bar{q}_1|}+\frac{1}{2|\bar{q}_2|} \, dt. \]
Recall that
\[V_0 =\left\{Q_s=\begin{bmatrix}
-a_1-a_2 & 0 \\
-a_1  & 0 \\
a_1  & 0 \\
a_1+a_2  &  0
\end{bmatrix} \, \Bigg| \, a_1 \geq 0, \, a_2 \geq 0 \right\}, \]
and \[V_1(\theta) = \left\{Q_e=\begin{bmatrix}
-b_1 & -b_2 \\
-b_1  & b_2 \\
b_1  & -b_2 \\
b_1  &  b_2
\end{bmatrix} R(\theta) \,  \Bigg| \, b_1 \geq 0, \, b_2 \geq 0 \right\}.\]

The test path $\mathcal{P}_{test}$ is defined in two steps. First, we can define a test path for a fixed angle $\theta= \theta_0$. At time $t=t_j=\frac{j}{10} \, (j=0,1,2, \dots, 10)$, the position matrices $\tilde{q}(\frac{j}{10})$ of the action minimizer $\tilde{q}(t) \, (t \in [0,1])$ are found by a Matlab program of T. Ouyang. In the test path $\mathcal{P}_{test}=\mathcal{P}_{test, \theta_0}$, we assume that
\[ \bar{q}(t_j)=\bar{q}(\frac{j}{10}) = \tilde{q}(\frac{j}{10}). \]
 For $t \in [t_j, t_{j+1}]$, the path will be the linear connection between $\bar{q}(t_j)$ and $\bar{q}(t_{j+1})\, (j=0,1,2, \dots, 9)$. Furthermore, we assume the bodies move at constant speeds in the time interval $[ \frac{j}{10}, \, \frac{j+1}{10} ]\, (j=0,1,2, \dots, 9)$. That is for each $j\, (j=0,1,2, \dots, 9)$,
 \begin{equation}\label{qbardef}
  \bar{q}(t) = \bar{q}(t_j) + 10\left(t- \frac{j}{10} \right)\left[\bar{q}(t_{j+1})- \bar{q}(t_j) \right], \quad  t \in \left[ \frac{j}{10}, \, \frac{j+1}{10} \right].
 \end{equation}
Once the 11 matrices $\tilde{q}(t_j)=\tilde{q}(\frac{j}{10}) \, (i=0,1,2, \dots, 10)$ are given, the action value $\mathcal{A}_{test}= \mathcal{A}(\mathcal{P}_{test})$ can be calculated accurately. By the definition of $\mathcal{P}_{test}=\mathcal{P}_{test, \theta_0}$, $\mathcal{A}_{test}$ will be close to the minimum action $\mathcal{A}\left( \tilde{q}\right)$. Note that $g_1(\theta)$ is the lower bound of action of paths with boundary collisions. If
\[ \mathcal{A}_{test} < g_1(\theta_0),\]
it implies that the minimizer $\tilde{q}(t) \, (t \in [0,1])$ has no collision singularity in the case when $\theta= \theta_0$.

The next step is to define a test path for an interval of $\theta=\theta_0$. Assume at $\theta=\theta_0$, the test path defined in the first step satisfies $\mathcal{A}_{test} < g_1(\theta_0)$. Then it is reasonable to expect that in a small interval of $\theta_0$, one can perturb the test path $\mathcal{P}_{test}=\mathcal{P}_{test, \, \theta_0}$, such that the inequality $\mathcal{A}_{test} < g_1(\theta)$ still holds. In this paper, the perturbed path is defined as follows.

We fix $\mathcal{P}_{test, \theta_0} (t \in [0, \frac{9}{10}])$. For $t \in [ \frac{9}{10}, 1]$, we perturb the last point $\bar{q}(t_{10})=\bar{q}(1)$ in $\mathcal{P}_{test}$ such that it satisfies the boundary condition
\[ \bar{q}(1) \in V_1(\theta) = \left\{Q_e=\begin{bmatrix}
-b_1 & -b_2 \\
-b_1  & b_2 \\
b_1  & -b_2 \\
b_1  &  b_2
\end{bmatrix} R(\theta) \,  \Bigg| \, b_1 \geq 0, \, b_2 \geq 0 \right\}\]
and connects it with $\bar{q}(t_9)=\bar{q}(\frac{9}{10})$ by straight lines with constant speeds. Set $\tilde{q}_{i,j}= \tilde{q}_{i}(\frac{j}{10})$, where $i=1,2$ and $j=0, 1, 2, \dots, 10$. By assumption,
\begin{equation}\label{q0q1min}
\tilde{q}(0)= \begin{bmatrix}
-a_{11}-a_{21} & 0 \\
-a_{11}  & 0 \\
a_{11}  & 0 \\
a_{11}+a_{21}  &  0
\end{bmatrix} \in V_0, \quad \tilde{q}(1)=\begin{bmatrix}
-b_{11} & -b_{21} \\
-b_{11}  & b_{21} \\
b_{11}  & -b_{21} \\
b_{11}  &  b_{21}
\end{bmatrix} R(\theta_0) \in V_1(\theta).
\end{equation}
$\bar{q}(1)$ of the perturbed path $\mathcal{P}_{test}=\mathcal{P}_{test, \theta}$ is defined as follows:
\begin{equation}\label{testq1}
\bar{q}(1)=\begin{bmatrix}
-b_{11} & -b_{21} \\
-b_{11}  & b_{21} \\
b_{11}  & -b_{21} \\
b_{11}  &  b_{21}
\end{bmatrix} R(\theta),
\end{equation}
where $b_{11}, b_{21}$ are values in $\tilde{q}(1)$ ( defined by \eqref{q0q1min}) of the minimizer $\tilde{q}(t)=\tilde{q}_{\theta_0}(t)$ and $\theta$ is in a small interval of $\theta_0$.

Let $\mathcal{A}_{j+1}$ be the action of the linear path connecting $\bar{q}(\frac{j}{10})$ and $\bar{q}(\frac{j+1}{10}), \, (j=0,1,2, \dots, 9).$ Then the action $\mathcal{A}(\mathcal{P}_{test})$ satisfies
\[\mathcal{A}(\mathcal{P}_{test})=\sum_{j=0}^9 \mathcal{A}_{j+1}. \]
Indeed, one can directly calculate each action $\mathcal{A}_{j+1} \, (j=0,1,2, \dots, 9)$, which are determined by the 11 position matrices $\bar{q}(t_k) \, (k=0, 1,2, \dots, 10)$. For $t \in [\frac{j}{10}, \frac{j+1}{10}]$, let $K_{j+1}$ be the corresponding kinetic energy and $U_{j+1}$ be the potential energy. It follows that
\[ \mathcal{A}_{j+1}= \int_{\frac{j}{10}}^{\frac{j+1}{10}} K_{j+1} \, dt +   \int_{\frac{j}{10}}^{\frac{j+1}{10}} U_{j+1} \, dt,  \quad j=0,1,2, \dots, 9. \]
Note that the linear path has a constant velocity in $ [\frac{j}{10}, \frac{j+1}{10}]$. By \eqref{qbardef}, it follows that the kinetic energy is
\[K_{j+1}= \sum_{i=1}^2 |\dot{\bar{q}}_{i}(t)|^2= 100 \sum_{i=1}^2 \big|\bar{q}_{i,(j+1)} -  \bar{q}_{i, j} \big|^2. \]
It implies that
\begin{equation}\label{formulaofk}
\int_{\frac{j}{10}}^{\frac{j+1}{10}}  K_{j+1} \, dt = 10 \sum_{i=1}^2 \big|\bar{q}_{i,(j+1)} -  \bar{q}_{i, j} \big|^2, \quad j=0,1,2, \dots, 9.
\end{equation}
The potential energy is
\[U_{j+1} = \frac{2}{|\bar{q}_{1}(t)-\bar{q}_{2}(t)|}+ \frac{2}{|\bar{q}_{1}(t)+\bar{q}_{2}(t)|}+ \frac{1}{|2\bar{q}_{1}(t)|}+ \frac{1}{|2\bar{q}_{2}(t)|} , \, \, \,  t \in [\frac{j}{10}, \frac{j+1}{10}],   \]
where $j=0,1,2, \dots, 9$ and $\bar{q}_{i}(t) \, (i=1,2)$ for $t \in [\frac{j}{10}, \frac{j+1}{10}]$ is defined by \eqref{qbardef}. Let $u=10(t-\frac{j}{10})$. Then
\begin{eqnarray}\label{intofpotential}
& & \int_{\frac{j}{10}}^{\frac{j+1}{10}}   \frac{1}{|\bar{q}_{i}(t) -\bar{q}_{k}(t) | }  \, dt  \nonumber\\
&=& \frac{1}{10}\int_{0}^{1}   \frac{1}{ |\bar{q}_{i, j}- \bar{q}_{k, j} +  u\left( \bar{q}_{i, (j+1)} - \bar{q}_{i, j} + \bar{q}_{k, j} -\bar{q}_{k, {j+1}}    \right)| }   \, du.
\end{eqnarray}
Note that the integral \eqref{intofpotential} always has the form $\displaystyle \int_{0}^1   \frac{du}{\sqrt{(a+b u)^2 + (c+d u)^2}}$ and it can be integrated as follows.
\begin{eqnarray}\label{generalformulaint}
& & \int_{0}^1   \frac{1}{\sqrt{(a+b u)^2 + (c+d u)^2}} \, du  \nonumber\\
&=& \frac{1}{\sqrt{b^2+d^2}} \ln \left[  \frac{ab+cd}{b^2+d^2} +u + \sqrt{\frac{(a+bu)^2+(c+du)^2 }{b^2+d^2}} \right] \Bigg|_0^1
\end{eqnarray}
Hence, by \eqref{formulaofk}, \eqref{intofpotential} and \eqref{generalformulaint}, once the coordinates of $\bar{q}_{i, j} =  \bar{q}_{i}(\frac{j}{10})\, (i=1,2; j=0,1,2, \dots, 10)$ are given, the action $\mathcal{A}_{j+1} \, (j=0,1, 2, \dots, 9)$ can be calculated accurately. Therefore, the action $\displaystyle \mathcal{A}(\mathcal{P}_{test}) = \sum_{j=0}^9 \mathcal{A}_{j+1} $  can also be calculated accurately. It is clear that $\mathcal{A}(\mathcal{P}_{test})$ is a function of $\theta$ and to exclude possible boundary collisions in the minimizer $\tilde{q}(t)$, we need to prove that for $\theta$ in a certain interval of $\theta_0$, the following inequality holds:
\begin{equation}\label{testpathdataineq}
   \mathcal{A}(\mathcal{P}_{test}) < g_1(\theta)= \frac{3}{2} \left[ \left( 2\pi^2 \right)^{\frac{1}{3}} + \left( 2\theta^2 \right)^{\frac{1}{3}}+ \theta^{\frac{2}{3}}\right].
\end{equation}

In order to make the inequality \eqref{testpathdataineq} true, we will choose 8 values of $\theta_0$ and define 8 different test paths. For each given $\theta_0$, a test path $\mathcal{P}_{test}$ can be defined in an interval of $\theta_0$ by the values of $\bar{q}(t_j)= \bar{q}(\frac{j}{10}) \, (j=0,1,2, \dots, 10)$. The definition of $\mathcal{P}_{test}$ then follows by Eqn. \eqref{qbardef}. It is clear that the position matrix of the test path $\bar{q}(t) \in \Sigma(\theta)$.

For convenience, the data of the 8 test paths are given in the Appendix. In order to compare the action of the test path $\mathcal{A}(\mathcal{P}_{test})= \mathcal{A}(\mathcal{P}_{test, \, \theta})$ with the lower bound of action $g_1(\theta)$ of paths with boundary collisions, we draw eight different pictures in the Appendix. In each figure, the horizontal axis is $\theta/\pi$ and the vertical axis is the action value $\mathcal{A}$. The inequality
\[  \mathcal{A}(\mathcal{P}_{test}) < g_1(\theta)= \frac{3}{2} \left[ \left( 2\pi^2 \right)^{\frac{1}{3}} + \left( 2\theta^2 \right)^{\frac{1}{3}}+ \theta^{\frac{2}{3}}\right]\]
always holds for any $\theta \in (0, \pi/7]$. Therefore, the minimizing path $\tilde{q}(t) \, (t \in [0,1])$ has no collision singularity for $\theta \in (0, \pi/7]$. The proof is complete.
\end{proof}

\section{Periodic extension}\label{possibleext}
In this section, we extend the collision free minimizing path $\tilde{q}(t) \, (t \in [0,1])$ to a periodic or quasi-periodic orbit. The following theorem holds for $\theta \in (0, \pi/7]$.

\begin{theorem}\label{periodicextprograde}
For each given $\theta \in (0, \pi/7]$, the corresponding minimizing path $\tilde{q}(t) \, (t \in [0,1])$ can be extended to a periodic or quasi-periodic orbit.
\end{theorem}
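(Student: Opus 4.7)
The plan is to iteratively apply the two extension lemmas to produce a globally defined classical solution on $\mathbb{R}$, then to extract a clean time-shift identity of the form $\tilde{q}(t + 4) = \tilde{q}(t) R(4\theta)$ whose rotation angle decides periodicity.

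Since $\theta \in (0, \pi/7]$, Lemma \ref{noncollision} ensures that the minimizer $\tilde{q}(t)$ on $[0,1]$ is collision-free on the entire closed interval, so both endpoints satisfy the hypotheses of the extension lemmas. First I would invoke Lemma \ref{extforat0} at $t = 0$ and Lemma \ref{extforat1} at $t = 1$ to produce a $C^1$ classical solution on $[-1, 2]$, with the symmetries
\[
\tilde{q}(-s) = \tilde{q}(s) B, \qquad \tilde{q}(2 - s) = P_\sigma \, \tilde{q}(s) \, B R(2\theta),
\]
where $P_\sigma$ denotes the $4\times 4$ row permutation matrix for the body swap $\sigma = (1\,2)(3\,4)$ and $B = \operatorname{diag}(1, -1)$.

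Substituting $s \mapsto -s$ in the second identity and using $B^2 = I$, I would extract the time-shift relation
\[
\tilde{q}(s + 2) = P_\sigma \, \tilde{q}(s) \, R(2\theta), \qquad s \in [-1, 0].
\]
This relation is then used to extend $\tilde{q}$ in both time directions: forward by iteration and backward by its inverse. Collision-freeness is preserved along the iteration because $P_\sigma$ and $R(2\theta)$ are linear isometries of the parallelogram subspace $V$ that leave the Newtonian equations invariant, and the resulting piecewise definition agrees smoothly at every integer by ODE uniqueness. Iterating once more and using $P_\sigma^2 = I$, $R(2\theta)^2 = R(4\theta)$, one obtains the pure rotation identity
\[
\tilde{q}(s + 4) = \tilde{q}(s) R(4\theta), \qquad s \in \mathbb{R}.
\]

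From this identity the dichotomy is immediate: if $\theta/\pi$ is rational then some positive integer power of $R(4\theta)$ equals the identity and $\tilde{q}$ is periodic of finite period; otherwise the rotations $\{R(4n\theta) : n \in \mathbb{Z}\}$ are dense in $SO(2)$, and $\tilde{q}$ is quasi-periodic in the usual sense. The main obstacle is essentially bookkeeping: carrying out the symbolic computation that tracks how the reflection $B$, the permutation $P_\sigma$, and the rotation $R(2\theta)$ interact so as to collapse into the single clean identity $\tilde{q}(s + 4) = \tilde{q}(s) R(4\theta)$. No analytic input beyond Lemmas \ref{noncollision}, \ref{extforat0}, \ref{extforat1}, and ODE uniqueness is required.
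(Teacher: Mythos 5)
Your proposal is correct and follows essentially the same route as the paper: collision-freeness from Lemma \ref{noncollision}, the two boundary extension lemmas, ODE uniqueness to propagate the symmetries, and the resulting identity $\tilde{q}(t+4)=\tilde{q}(t)R(4\theta)$ giving the rational/irrational dichotomy. The only cosmetic difference is that you derive the shift relation $\tilde{q}(s+2)=P_\sigma\,\tilde{q}(s)\,R(2\theta)$ by composing the two reflection symmetries, whereas the paper writes out the equivalent piecewise extension formula explicitly on $[1,2]$, $[2,4]$, and $[4k,4k+4]$.
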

\begin{proof}
The proof follows by Lemma \ref{extforat0} and Lemma \ref{extforat1}. Note that when $\theta \in (0, \pi/7]$, by Lemma \ref{testpathdef}, the minimizing path $\tilde{q}(t) \, (t \in [0,1])$ is collision-free. By the uniqueness of solutions of initial value problem in an ODE system, $\tilde{q}(t) \, (t \in [0,1])$ can be extended. The extension formula is defined as follows
\begin{equation}\label{defofpqext}
 \tilde{q}(t)=\begin{cases}
 (\tilde{q}_1^T(t),\, \tilde{q}_2^T(t),\, \tilde{q}_3^T(t),\, \tilde{q}_4^T(t))^T, & t \in [0, \, 1],\\
 \\
 (\tilde{q}_2^T(2-t),\, \tilde{q}_1^T(2-t),\, \tilde{q}_4^T(2-t), \, \tilde{q}_3^T(2-t))^TBR(2\theta), &  t\in [1, \, 2],\\
 \\
(\tilde{q}_2^T(t-2),\, \tilde{q}_1^T(t-2),\, \tilde{q}_4^T(t-2), \, \tilde{q}_3^T(t-2))^T R(2\theta),&t\in [2, \, 4],  \\
 \\
  \tilde{q}(t-4k) R(4k \theta), \qquad \quad  t \in [4k, \, 4k+4], &
 \end{cases}
 \end{equation}
where $B=\begin{bmatrix}
1&0\\
0&-1
\end{bmatrix}$ and $k \in \mathbb{Z}$.

It follows that
\[ \tilde{q}(t+4) = \tilde{q}(t) R(4 \theta), \quad \quad t \in \mathbb{R}. \]
Note that $\tilde{q}(t) \in \Sigma(\theta)$. By Lemma \ref{extforat0} and Lemma \ref{extforat1}, the velocities $\dot{\tilde{q}}$ at $t=0$ and $t=1$ satisfy
\begin{align}\label{velocityboundaryeqn2}
& \dot{\tilde{q}}_{1x}(0)=\dot{\tilde{q}}_{2x}(0)=\dot{\tilde{q}}_{3x}(0)=\dot{\tilde{q}}_{4x}(0)=0, \nonumber\\
& \dot{\tilde{q}}_1(1)= -\dot{\tilde{q}}_2(1)B R(2 \theta),   \quad  \dot{\tilde{q}}_4(1)= -\dot{\tilde{q}}_3(1)B R(2 \theta).
\end{align}
A direct computation implies that $\tilde{q}(t)$ is $C^1$ for all $t \in \mathbb{R}$ and it satisfies
\[ \tilde{q}_1(t) = - \tilde{q}_4(t), \qquad   \tilde{q}_2(t) = - \tilde{q}_3(t),  \quad \forall \, t \in \mathbb{R}.\]
 It follows that  $\tilde{q}(t)$ in \eqref{defofpqext} is a classical solution of the Newtonian equation. If $\theta/\pi \in (0, 1/7]$ is rational, we set $\displaystyle \frac{\theta}{\pi}= \frac{k_1}{l_1}$, where integers $k_1$, $l_1$ are relatively prime. It follows that $\tilde{q}(t+ 4l_1)= \tilde{q}(t)$. Hence, $\tilde{q}(t)$ is periodic. If $\theta/\pi  \in (0, 1/7]$ is irrational, then $\tilde{q}(t)$ is a quasi-periodic orbit. The proof is complete.
\end{proof}

\section{Geometric properties of an minimizer}\label{morevariationprop}
In the last section, we introduce a new geometric argument to show that for each $\theta \in (0, \pi/2)$, the path $\tilde{q}(t) \, (t \in [0,1])$ in this paper can NOT be a part of the retrograde double-double orbit. 

Note that the action minimizer $\tilde{q}=\tilde{q}(t) \, (t \in [0,1])$ connects the two boundaries $V_0$ and $V_1(\theta)$ in the parallelogram equal-mass four-body problem. It satisfies
\begin{equation}\label{actionqtilde}
\mathcal{A}(\tilde{q}) = \inf_{q \in \Sigma(\theta)}\mathcal{A}(q)= \inf_{q \in \Sigma(\theta)} \int_0^1 (K+U) \, dt,
\end{equation}
where 
\[ \Sigma(\theta) = \left\{q \in H^1([0,1],V) \, \bigg| \,  q(0)\in V_0, \, \, q(1)\in V_1(\theta) \right\},  \]
\begin{equation}\label{qstartspace001}
V_0 = \left\{Q_s=\begin{bmatrix}
-a_1-a_2 & 0 \\
-a_1  & 0 \\
a_1  & 0 \\
a_1+a_2  &  0
\end{bmatrix} \, \Bigg| \, a_1 \geq 0, \, a_2 \geq 0 \right\},
\end{equation}
 and 
\begin{equation}\label{qendspace001}
V_1(\theta) = \left\{Q_e=\begin{bmatrix}
-b_1 & -b_2 \\
-b_1  & b_2 \\
b_1  & -b_2 \\
b_1  &  b_2
\end{bmatrix} R(\theta) \,  \Bigg| \, b_1 \geq 0, \, b_2 \geq 0 \right\},
\end{equation}
with $R(\theta)= \begin{bmatrix}
\cos (\theta) & \sin(\theta) \\
-\sin (\theta) & \cos (\theta)
\end{bmatrix}$ and $\theta \in (0, \pi/2)$. 
We define another boundary set $\widetilde{V}_1(\theta)$ as follows:
\begin{equation}\label{qendspace0011}
\widetilde{V}_1(\theta) = \left\{Q_e=\begin{bmatrix}
-b_1 & -b_2 \\
-b_1  & b_2 \\
b_1  & -b_2 \\
b_1  &  b_2
\end{bmatrix} R(\theta) \,  \Bigg| \, b_1 \in \mathbb{R}, \, b_2  \in \mathbb{R} \right\}.
\end{equation}
Let 
\[ \Sigma(\theta)^{*} = \left\{q \in H^1([0,1],V) \, \bigg| \,  q(0)\in V_0, \, \, q(1)\in \widetilde{V}_1(\theta) \right\}.  \]
It is clear that $\widetilde{V}_1(\theta)$ is a two-dimensional vector space and $V_1(\theta) \subset \widetilde{V}_1(\theta)$.  By Section \ref{coercivitysection}, there exists an action minimizer $q^{*}=q^{*}(t) \, (t \in [0,1])$, such that  
\begin{equation}\label{actionqtilde02}
\mathcal{A}(q^{*}) = \inf_{q \in \Sigma(\theta)^{*}}\mathcal{A}(q)= \inf_{q \in \Sigma(\theta)^{*}} \int_0^1 (K+U) \, dt. 
\end{equation}
Before stating the results in this section, we first introduce a coordinate transformation. Note that in the parallelogram equal-mass four-body problem, we assume $q_1=-q_4$ and $q_2=-q_3$. Recall that the potential function $U$ can be written as
\begin{eqnarray*}
 U &=& \sum_{1 \le i <j \le 4} \frac{1}{|q_i-q_j|}= \frac{2}{|q_1-q_2|}+ \frac{2}{|q_1+q_2|}+ \frac{1}{|2q_1|}+ \frac{1}{|2q_2|}.
\end{eqnarray*}
And the kinetic energy is 
\[  K = \frac{1}{2} \left(|\dot{q}_1|^2+|\dot{q}_2|^2+|\dot{q}_3|^2+|\dot{q}_4|^2 \right)= |\dot{q}_1|^2+ |\dot{q}_2|^2. \]
Let $Z_1=q_1-q_2$ and $Z_2= q_1+q_2$. It follows that 
\begin{equation}\label{UformJacobi}
U\equiv U(Z_1, \, Z_2) = \frac{2}{|Z_1|}+ \frac{2}{|Z_2|}+ \frac{1}{|Z_1+Z_2|}+ \frac{1}{|Z_1-Z_2|},
\end{equation}
and 
\begin{equation}\label{KformJacobi}
K \equiv K(Z_1, \, Z_2) = \frac{1}{2}|\dot{Z}_1|^2+\frac{1}{2}|\dot{Z}_2|^2.
\end{equation}
For convenience, we denote $Z_1=Z_1(t)$ and $Z_2=Z_2(t)$ the new coordinates of the minimizer $\tilde{q}(t) \, (t \in [0,1])$, and denote $Z^{*}_1=Z^{*}_1(t)$ and $Z^{*}_2=Z^{*}_2(t)$ the new coordinates of the minimizer $q^{*}(t) \, (t \in [0,1])$. In the standard Cartesian $xy$-coordinate system, the $i$-th quadrant is denoted by $\mathsf{Q}_i$ and its closure by $\overline{\mathsf{Q}_i}$ \, (i=1,2,3,4). For example, $\mathsf{Q}_1=\{(x, y) \, | \, x> 0, \, y> 0\}$ and $\overline{\mathsf{Q}_1}=\{(x, y) \,  | \, x \geq 0, \, y \geq 0\}$. 
%
%
%we show that 
%\begin{theorem}\label{localminimizer}
%Assume $\theta \in (0, \pi/2)$. The actions of the two minimizers $\tilde{q}(t) \, (t \in [0,1])$ and $q^{*}(t) \, (t \in [0,1])$ satisfy
%\[\mathcal{A}(\tilde{q}(t)) >\mathcal{A}(q^{*}(t)). \]
%Furthermore, the new coordinate $(Z^{*}_1(t), \, Z^{*}_2(t))$ of $q^{*}(t)$ satisfies: $Z^{*}_1(t) \in \overline{\mathsf{Q}_2} \, $ and $ \, Z^{*}_2(t) \in \overline{\mathsf{Q}_3} \, $ for all $t \in [0,1]$.
%\end{theorem}

In this section, we study the geometric properties of the minimizer by analyzing its new coordinates. If both $Z_1$ and $Z_2$ are nonzero, we can define an angle $\Delta=\Delta(Z_1, Z_2) \in [0, \pi/2]$ to represent the angle between the two straight lines spanned by $Z_1$ and $Z_2$. The formula is given as follows. The angle $\beta=\beta(Z_1, Z_2)$ between the two vectors is defined by
\[  \beta(Z_1, \, Z_2) = \arccos{\frac{\langle Z_1, \, Z_2 \rangle}{|Z_1|||Z_2|}}, \]
whenever both $Z_1 \not= 0$ and $Z_2 \not= 0$ hold. Then we define the angle $\Delta=\Delta(Z_1,\,  Z_2)$ by
\begin{equation}\label{deltatheta}
  \Delta(Z_1, \, Z_2)=\left\{\begin{aligned}
                        & \beta,\ \ \text{if}  \ \ \beta\le\frac{\pi}{2}; \\
                        & \pi-\beta,\ \ \text{if}\ \   \beta>\frac{\pi}{2}.\\
                     \end{aligned}
                     \right.
\end{equation}
It is clear that $ \Delta=\Delta(Z_1, \, Z_2) \in [0, \pi/2]$.

Actually, for a given $t\in (0,1)$, if $Z_i= Z_i(t) \not=  0 \ (i=1,2)$, a formula of the potential energy $U$ in terms of $|Z_1|$, $|Z_2|$ and $\Delta$ can be derived by the law of cosines:
\begin{equation}\label{formulaofUinz1z2theta}
\begin{split}
U&= \frac{2}{|Z_1|}+ \frac{2}{|Z_2|}+ \frac{1}{|Z_1+Z_2|}+ \frac{1}{|Z_1-Z_2|}\\
&=   \frac{2}{|Z_1|}+ \frac{2}{|Z_2|}+ \frac{1}{\sqrt{ |Z_1|^2+ |Z_2|^2 + 2|Z_1||Z_2| \cos (\Delta ) }} \\
& \quad + \frac{1}{\sqrt{ |Z_1|^2+ |Z_2|^2 - 2|Z_1||Z_2| \cos (\Delta ) }}. 
\end{split}
\end{equation}
By formula \ref{formulaofUinz1z2theta}, we can denote the potential energy $U$ by $U(|Z_1|, \, |Z_2|, \, \Delta)$ or \\
$U(|Z_1(t)|,\,  |Z_2(t)|, \, \Delta(t))$. A simple calculation shows:
\begin{proposition}\label{Uintheta}
Fix $|Z_1| \neq 0$ and $|Z_2| \neq 0$ and assume that the potential energy $U= U(|Z_1|,\, |Z_2|, \, \Delta)$ is finite. Then $U(|Z_1|,\, |Z_2|,\,  \Delta)$ is a strictly decreasing function with respect to $\Delta$.
\end{proposition}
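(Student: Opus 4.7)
The plan is to differentiate $U$ directly with respect to $\Delta$ with $|Z_1|$ and $|Z_2|$ fixed, and show that the derivative is strictly negative on the interior of the relevant interval, which then yields strict monotonicity on the whole interval.

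First I would note that in the expression
\[ U = \frac{2}{|Z_1|}+\frac{2}{|Z_2|}+ \frac{1}{\sqrt{|Z_1|^2+|Z_2|^2+2|Z_1||Z_2|\cos\Delta}} + \frac{1}{\sqrt{|Z_1|^2+|Z_2|^2-2|Z_1||Z_2|\cos\Delta}}, \]
the first two terms are independent of $\Delta$ and drop out. Introducing the abbreviations $a=|Z_1|^2+|Z_2|^2$ and $b=2|Z_1||Z_2|$, which satisfy $a\ge b>0$ by the AM--GM inequality (with equality iff $|Z_1|=|Z_2|$), the $\Delta$-dependence is captured by
\[ f(\Delta) := (a+b\cos\Delta)^{-1/2} + (a-b\cos\Delta)^{-1/2}. \]

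Next I would check that under the hypothesis that $U$ is finite, both radicands are strictly positive for every $\Delta\in[0,\pi/2]$ of interest; indeed $a+b\cos\Delta\ge a>0$ always, while $a-b\cos\Delta=0$ would force $|Z_1|=|Z_2|$ together with $\cos\Delta=1$, which in turn forces $Z_1=\pm Z_2$ and makes $U$ infinite. Differentiating gives
\[ f'(\Delta) = \frac{b\sin\Delta}{2}\left[\frac{1}{(a+b\cos\Delta)^{3/2}} - \frac{1}{(a-b\cos\Delta)^{3/2}}\right]. \]
For $\Delta\in(0,\pi/2)$ we have $\sin\Delta>0$ and $\cos\Delta>0$, so $a+b\cos\Delta>a-b\cos\Delta>0$, forcing the bracket to be strictly negative. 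Hence $f'(\Delta)<0$ on the open interval $(0,\pi/2)$, so $f$, and therefore $U$, is strictly decreasing on $[0,\pi/2]$; the vanishing of $\sin\Delta$ at $\Delta=0$ and of the bracket at $\Delta=\pi/2$ does not disturb strict monotonicity on the closed interval.

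There is no serious obstacle here; the only point requiring any care is the bookkeeping around the degenerate configuration $|Z_1|=|Z_2|$ with $\Delta=0$, which would annihilate $a-b\cos\Delta$ and collide the two particles in one of the binary clusters. This case is excluded by the standing assumption that $U$ is finite, so the derivative computation is valid throughout the range considered.
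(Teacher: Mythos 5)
Your proof is correct and follows essentially the same route as the paper: differentiate $U$ in $\Delta$ with $|Z_1|,|Z_2|$ fixed and observe that the resulting bracket is negative on $(0,\pi/2)$ because $|Z_1|^2+|Z_2|^2+2|Z_1||Z_2|\cos\Delta > |Z_1|^2+|Z_2|^2-2|Z_1||Z_2|\cos\Delta>0$. Your extra care about the degenerate radicand (excluded by finiteness of $U$) and about deducing strict monotonicity on the closed interval from negativity of the derivative on the open interval is a welcome tightening of the paper's slightly terser argument, but not a different method.
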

\begin{proof}
Fixing $|Z_1|$ and $|Z_2|$ and taking the derivative of $U=U(|Z_1|, \, |Z_2|, \, \Delta)$ with respect to $\Delta$, it follows that 
\begin{eqnarray*}
& &\frac{d   U(|Z_1|,\, |Z_2|, \, \Delta )}{d \Delta }\\
&=& \frac{ |Z_1||Z_2|\sin(\Delta )}{\left[|Z_1|^2+ |Z_2|^2 + 2|Z_1||Z_2| \cos (\Delta  ) \right]^\frac{3}{2}}- \frac{|Z_1||Z_2|\sin(\Delta  )}{\left[ |Z_1|^2+ |Z_2|^2 - 2|Z_1||Z_2| \cos (\Delta ) \right]^\frac{3}{2}}.
\end{eqnarray*}
Note that $\Delta  \in [0,\frac{\pi}{2}]$. It implies that
\[ \frac{d   U(|Z_1|,\, |Z_2|, \, \Delta )}{d \Delta } \leq 0, \]
and $\frac{d   U(|Z_1|, \, |Z_2|, \, \Delta )}{d \Delta }=0$ if and only if $\Delta = 0 \, $ or $\, \Delta = \pi/2$. The proof is complete.

\end{proof}

It is known that \cite{CA, Mar} the two action minimizers $\tilde{q}(t) \, (t \in [0,1])$ and $q^{*}(t) \, (t \in [0,1])$ are collision-free in $(0,1)$. It implies that the potential energy $U$ of both minimizers are finite for all $t \in (0,1)$ and \[Z_1(t) \not=0, \quad Z_2(t) \not=0, \quad Z^{*}_1(t) \not=0, \quad Z^{*}_2(t) \not=0, \qquad \forall \,t \in (0,1).\]

Given $\theta \in (0, \pi/2)$, the new coordinates $Z_1$ and $Z_2$ of the action minimizer $\tilde{q}(t) \, (t \in [0,1])$ satisfy: both $Z_1(0)$ and $Z_2(0)$ stay on the negative $x$-axis, while $Z_1(1) \in \overline{\mathsf{Q}_4}$ and $Z_2(1) \in \overline{\mathsf{Q}_3}$. A sample picture is shown in Fig. ~\ref{reflection0}. 
\begin{figure}[ht]
    \begin{center}
\psset{xunit=2cm,yunit=2cm}
\begin{pspicture}(-1,-1)(1,1)
  \psaxes{->}(0, 0)(-0.95, -0.85)(0.95, 0.85)
  \rput(1, -0.1){$x$}
  \rput(0.1, 0.9){$y$}
  \rput(-0.1, -0.1){$0$}
\psdots[dotsize=3pt, linecolor=blue](-0.35, 0)(0.5, -0.4)
  \psline[linewidth=1.5pt, linecolor=blue, linestyle=dashed]{->}(-0.34, 0)(-0.35, 0)
\psdots[dotsize=3pt, linecolor=red](-0.9, 0)(-0.6, -0.6)
  \psline[linewidth=1.5pt, linecolor=red, linestyle=dashed]{->}(-0.89, 0)(-0.9, 0)
\rput(-0.35, 0.2){\textcolor{blue}{$Z_1(0)$}}
\rput(-0.9, 0.2){\textcolor{red}{$Z_2(0)$}}
\rput(0.6, -0.2){\textcolor{blue}{$Z_1(1)$}}
\rput(-0.7, -0.8){\textcolor{red}{$Z_2(1)$}}
  \psline[linewidth=1.5pt, linecolor=red, linestyle=dashed]{->}(0, 0)(-0.6, -0.6)
  \psline[linewidth=1.5pt, linecolor=blue, linestyle=dotted]{->}(0, 0)(0.5, -0.4) 
\end{pspicture}
  \end{center}
\caption{ \label{reflection0}The positions of $Z_i(0)$ and $Z_i(1) \, (i=1,2)$. $Z_1(0)$ and $Z_1(1)$ are in blue, while $Z_2(0)$ and $Z_2(1)$ are in red.}
 \end{figure}
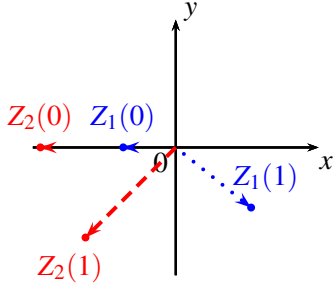
 
Let $Z_i=(Z_{ix}, \,Z_{iy}) \, (i=1,2)$. A new path $(\widetilde{Z}_1, \, \widetilde{Z}_2)= (\widetilde{Z}_1(t), \, \widetilde{Z}_2(t)) \, (t \in [0,1])$ is defined by 
\begin{equation}\label{Z1Z2widetilde}
\widetilde{Z}_1(t)= (-|Z_{1x}(t)|, \, |Z_{1y}(t)|), \qquad   \widetilde{Z}_2(t)= (-|Z_{2x}(t)|, \, -|Z_{2y}(t)|).
\end{equation} 
By \ref{Z1Z2widetilde}, $\widetilde{Z}_1(1)=-Z_1(1) \in \overline{\mathsf{Q}_2}$, while $\widetilde{Z}_2(1)= Z_2(1)$ and $\widetilde{Z}_i(0)= Z_i(0) \, (i=1,2)$. Since the vectors $Z_1(1)$ and $Z_2(1)$ are perpendicular, it implies that $\widetilde{Z}_1(1)$ and $\widetilde{Z}_2(1)$ are also perpendicular. It follows that the new path $(\widetilde{Z}_1, \, \widetilde{Z}_2)= (\widetilde{Z}_1(t), \, \widetilde{Z}_2(t))$ at $t=1$ forms a rectangular configuration and it is in the boundary space $\widetilde{V}_1(\theta)$. Hence, the new path $(\widetilde{Z}_1, \, \widetilde{Z}_2)$ is in the functional space $\Sigma(\theta)^{*}$. A sample picture of $Z_1(t)$ and $\widetilde{Z}_1(t)$ is given in Fig. ~\ref{reflection}.  

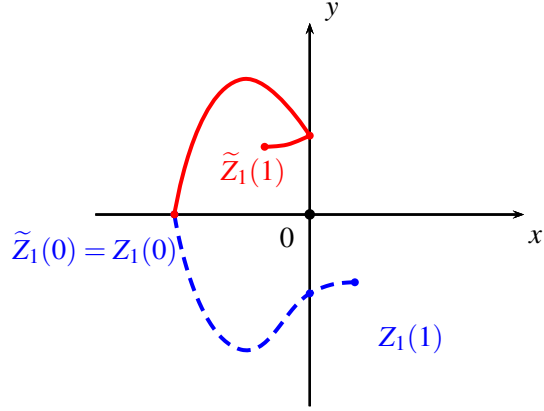
\begin{figure}[ht]
    \begin{center}
\psset{xunit=3cm,yunit=3cm}
\begin{pspicture}(-1,-1)(1,1)
  \psaxes{->}(0, 0)(-0.95, -0.85)(0.95, 0.85)
  \rput(1, -0.1){$x$}
  \rput(0.1, 0.9){$y$}
  \rput(-0.1, -0.1){$0$}
  \rput(-0.5, 0)
  
  \rput(-0.95, -0.15){\textcolor{blue}{$\widetilde{Z}_1(0)=Z_1(0)$}}
  \rput(0.45, -0.55){\textcolor{blue}{$Z_1(1)$}}
  \psdots[dotsize=4pt](0,0)
  \psdots[dotsize=3pt, linecolor=blue](-0.6, 0)(0.2, -0.3)(0, -0.35)
  \pscurve[linewidth=1.5pt, linestyle=dashed, linecolor=blue](-0.6, 0)(-0.3, -0.6)(0, -0.35)(0.1, -0.31)(0.2, -0.3)
  
   \psdots[dotsize=3pt, linecolor=red](-0.6, 0)(0, 0.35)(-0.2, 0.3)
  \pscurve[linewidth=1.5pt, linecolor=red](-0.6, 0)(-0.3, 0.6)(0, 0.35)
  \pscurve[linewidth=1.5pt, linecolor=red](0, 0.35)(-0.1, 0.31)(-0.2, 0.3)

  \rput(-0.25, 0.2){\textcolor{red}{$\widetilde{Z}_1(1)$}}
\end{pspicture}
  \end{center}
\caption{ \label{reflection} A sample picture of $\widetilde{Z}_1=\widetilde{Z}_1(t)(t \in [0,1])$ generated by $Z_1=Z_1(t) (t \in [0,1])$, which satisfies $\widetilde{Z}_1(t)= (-|Z_{1x}(t)|, \, |Z_{1y}(t)|)$.}
 \end{figure}
 Similar to the definition of $\Delta=\Delta(Z_1, \, Z_2)$ in \eqref{deltatheta}, we set $\widetilde{\Delta}= \Delta(\widetilde{Z}_1, \, \widetilde{Z}_2)$ to be the angle between the two straight lines spanned by two nonzero vectors $\widetilde{Z}_1$ and $\widetilde{Z_2}$. That is, \begin{equation}\label{deltathetatilde}
  \Delta(\widetilde{Z}_1, \, \widetilde{Z}_2)=\left\{\begin{aligned}
                        & \widetilde{\beta},\ \ \text{if}  \ \ \widetilde{\beta}\le\frac{\pi}{2}; \\
                        & \pi-\widetilde{\beta},\ \ \text{if}\ \   \widetilde{\beta}>\frac{\pi}{2},\\
                     \end{aligned}
                     \right.
\end{equation}
 while $\widetilde{\beta} = \arccos{\frac{\langle \widetilde{Z}_1, \, \widetilde{Z}_2 \rangle}{|\widetilde{Z}_1|||\widetilde{Z}_2|}}$. 
 
Note that for any $t \in (0,1)$, $|\widetilde{Z}_i(t)|=|Z_i(t)| \not=0 \, (i=1,2)$. It implies that both $\Delta$ and $\widetilde{\Delta}$ are well-defined for $t \in (0,1)$. With the help of Proposition \ref{Uintheta}, we can show that
\begin{lemma}\label{potentialinequality}
For any $t \in (0,1)$, 
\[U(|Z_1(t)|, \,  |Z_2(t)|,\,  \Delta(t)) \geq U(|\widetilde{Z}_1(t)|, \, |\widetilde{Z}_2(t)|, \, \widetilde{\Delta}(t)), \]
while the equality holds if and only if $Z_1(t)$ and $Z_2(t)$ are in two adjacent closed quadrants. 
\end{lemma}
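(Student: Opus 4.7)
The plan is to reduce the pointwise inequality for $U$ to a pointwise inequality between the angles $\Delta(t)$ and $\widetilde{\Delta}(t)$, and then verify the latter by an elementary cosine computation. The key observation is that the reflection-type map $Z_i \mapsto \widetilde{Z}_i$ defined in \eqref{Z1Z2widetilde} preserves the moduli $|Z_i(t)|$, so in the formula \eqref{formulaofUinz1z2theta} only the angle argument changes. Since $U(|Z_1|, |Z_2|, \cdot)$ is strictly decreasing on $[0, \pi/2]$ by Proposition \ref{Uintheta}, proving the lemma reduces to establishing $\widetilde{\Delta}(t) \geq \Delta(t)$ for every $t \in (0,1)$, with equality iff the two angles coincide.

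To establish the angle inequality, I would write $Z_1(t) = (a,b)$ and $Z_2(t) = (c,d)$. From the definition \eqref{deltatheta} of $\Delta$, one obtains $\cos \Delta(t) = |\langle Z_1, Z_2 \rangle|/(|Z_1||Z_2|) = |ac + bd|/(|Z_1||Z_2|)$. Analogously, using $\widetilde{Z}_1 = (-|a|,|b|)$ and $\widetilde{Z}_2 = (-|c|,-|d|)$, one computes
\[ \cos \widetilde{\Delta}(t) \;=\; \frac{\big|\,|a||c| - |b||d|\,\big|}{|Z_1||Z_2|} \;=\; \frac{\big|\,|ac| - |bd|\,\big|}{|Z_1||Z_2|}. \]
Hence $\widetilde{\Delta}(t) \geq \Delta(t)$ reduces to the elementary inequality $\big|\,|ac| - |bd|\,\big| \leq |ac + bd|$. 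A quick case split on the signs of $ac$ and $bd$ confirms this inequality always holds: when $ac$ and $bd$ have the same sign, $|ac+bd| = |ac| + |bd| \geq \big|\,|ac|-|bd|\,\big|$; when they have opposite signs, $|ac+bd| = \big|\,|ac|-|bd|\,\big|$. Equality therefore holds precisely when $(ac)(bd) \leq 0$.

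For the equality clause I would interpret the condition $(ac)(bd) \leq 0$ geometrically. Since each closed quadrant $\overline{\mathsf{Q}_i}$ is characterized by a sign pattern of the coordinates, a direct sign-analysis shows that $(ac)(bd) \leq 0$ is equivalent to the existence of adjacent indices $i,j$ with $Z_1(t) \in \overline{\mathsf{Q}_i}$ and $Z_2(t) \in \overline{\mathsf{Q}_j}$ (including boundary cases where one of the vectors sits on a coordinate axis, which automatically places it in two adjacent closed quadrants simultaneously). Combining this with the strict monotonicity in Proposition \ref{Uintheta} yields both the inequality and the equality characterization. The argument is entirely elementary; there is no substantive obstacle beyond careful sign bookkeeping in the cosine formula.
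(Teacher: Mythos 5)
Your proposal is correct and follows essentially the same route as the paper: both reduce the statement to the angle inequality $\widetilde{\Delta}(t)\geq\Delta(t)$ (with equality iff $Z_1(t),Z_2(t)$ lie in adjacent closed quadrants) and then invoke the monotonicity of $U$ in $\Delta$ from Proposition \ref{Uintheta}. The only difference is presentational: you verify the angle inequality via the cosine formula and the sign of $(ac)(bd)=abcd$, whereas the paper works with the auxiliary angles $\alpha_1,\alpha_2$ between the lines spanned by $Z_1,Z_2$ and the $x$-axis and compares $\min\{\alpha_1+\alpha_2,\pi-(\alpha_1+\alpha_2)\}$ with $|\alpha_1-\alpha_2|$ — two equivalent bookkeepings of the same elementary fact.
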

\begin{proof}
We first show that for any $t \in (0,1)$, 
\[ \widetilde{\Delta}(t) =\Delta(\widetilde{Z}_1(t), \, \widetilde{Z}_2(t)) \ge \Delta(t)=\Delta(Z_1(t), \, Z_2(t)). \]
Denote $\alpha_1(t)\in [0,\frac{\pi}{2}]$ the angle between the $x$-axis and the straight line spanned by $Z_1(t)$, $\alpha_2(t)\in [0,\frac{\pi}{2}]$ the angle between the $x$-axis and the line spanned by $Z_2(t)$. Their explicit formulas are: 
\[\alpha_1(t)=\Delta(Z_1(t), \, \vec{s})=\min\left\{\arccos\frac{\langle Z_1(t), \,  \vec{s} \rangle}{|Z_1(t)|}, \, \, \, \pi- \arccos\frac{\langle Z_1(t), \, \vec{s} \rangle}{|Z_1(t)|} \right\},  \]
\[\alpha_2(t)= \Delta(Z_2(t), \, \vec{s})=\min \left\{\arccos\frac{\langle Z_2(t), \,  \vec{s} \rangle}{|Z_2(t)|},  \, \, \, \pi- \arccos\frac{\langle Z_2(t), \, \vec{s} \rangle}{|Z_2(t)|}  \right\},  \]
with $\vec{s}=(1, 0)$. It follows that
\[\widetilde{\Delta}(t)= \min \left\{[\alpha_1(t)+\alpha_2(t)], \, \pi-[\alpha_1(t)+\alpha_2(t)] \right\}. \]
If $\alpha_1(t)+\alpha_2(t)\le\frac{\pi}{2}$, then
\begin{equation*}
  \min\{[\alpha_1(t)+\alpha_2(t)], \, \pi-[\alpha_1(t)+\alpha_2(t)]\}=[\alpha_1(t)+\alpha_2(t)]\ge |\alpha_1(t)-\alpha_2(t)|.
\end{equation*}
If $\alpha_1(t)+\alpha_2(t)>\frac{\pi}{2}$, then
\begin{align*}
  \min\{|\alpha_1(t)+\alpha_2(t)|, \, \pi-|\alpha_1(t)+\alpha_2(t)|\}&=\pi-[\alpha_1(t)+\alpha_2(t)] \\
   &\ge \frac{\pi}{2}- \min \{\alpha_1(t), \, \alpha_2(t)\}\\
   & \ge |\alpha_1(t)-\alpha_2(t)|.
\end{align*}
Thus we always have
\begin{equation}\label{deltaalpha}
  \widetilde{\Delta}(t) \ge|\alpha_1(t)-\alpha_2(t)|,
\end{equation}
and the equality holds if and only if $\alpha_1(t)=0$ or $\pi/2$ or $\alpha_2(t)=0$ or $\pi/2$.
By our definition, if $Z_1(t)$ and $Z_2(t)$ are in two adjacent closed quadrants, $\Delta(t)=\min\{[\alpha_1(t)+\alpha_2(t)], \, \pi-[\alpha_1(t)+\alpha_2(t)]\}$; otherwise, $\Delta(t)=|\alpha_1(t)-\alpha_2(t)|$. Note that
\begin{equation*}
  \widetilde{\Delta}(t)=\min\{[\alpha_1(t)+\alpha_2(t)], \, \pi-[\alpha_1(t)+\alpha_2(t)]\}.
\end{equation*}
It follows that in both cases, we have
\begin{equation*}
  \widetilde{\Delta}(t)\ge \Delta(t),
\end{equation*}
and the equality holds if and only if $Z_1(t)$ and $Z_2(t)$ are in two adjacent closed quadrants. 

Note that both $Z_1$ and $Z_2$ are nonzero for $t \in (0,1)$, and $U(Z_1(t), \, Z_2(t))=U(|Z_1(t)|, \, |Z_2(t)|, \, \Delta(t))$ is always finite for any $t \in (0,1)$. Therefore, for every $t \in (0,1)$, $U(\widetilde{Z}_1(t), \, \widetilde{Z}_2(t))= U(|\widetilde{Z}_1(t)|, \, |\widetilde{Z}_2(t)|, \, \widetilde{\Delta}(t))$ stays finite. By Proposition \ref{Uintheta}, it follows that 
\[U(|Z_1(t)|, \, |Z_2(t)|, \, \Delta(t))  \geq U(|\widetilde{Z}_1(t)|, \, |\widetilde{Z}_2(t)|, \, \widetilde{\Delta}(t)), \]
while the equality holds if and only if $Z_1(t)$ and $Z_2(t)$ are in two adjacent closed quadrant. The proof is complete.
\end{proof}
The following two propositions are introduced to prove the two theorems in this section.
\begin{proposition}\label{ontheaxessametime}
Let $(Z_1, \, Z_2) \in H^1([0,1], \chi)$ be the solution of the Newtonian equation for $t \in (0,1)$. If there exists some $t_0 \in (0,1)$ such that both $Z_1(t_0)$ and $Z_2(t_0)$ are tangent to the axes, then $Z_1(t)$ and $Z_2(t)$ must stay on the axes for all $t \in [0,1]$.
\end{proposition}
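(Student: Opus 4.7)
My plan is to exhibit, for each case describing which axes $Z_1(t_0)$ and $Z_2(t_0)$ lie on, a linear symmetry of the Newtonian system in the coordinates $(Z_1,Z_2)$ that fixes the initial data at $t_0$, and then invoke uniqueness of ODE solutions to conclude that the trajectory itself is invariant under that symmetry.

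\textbf{Step 1: A family of symmetries.} In the coordinates $(Z_1,Z_2)$ one has
\[
K=\tfrac{1}{2}\bigl(|\dot Z_1|^2+|\dot Z_2|^2\bigr),\qquad U=\tfrac{2}{|Z_1|}+\tfrac{2}{|Z_2|}+\tfrac{1}{|Z_1+Z_2|}+\tfrac{1}{|Z_1-Z_2|}.
\]
For any orthogonal $2\times 2$ matrix $S$ and any $\varepsilon\in\{-1,+1\}$, the map $(Z_1,Z_2)\mapsto(SZ_1,\varepsilon SZ_2)$ preserves both $K$ and $U$: orthogonality gives $|SZ_i|=|Z_i|$, and $|SZ_1\pm\varepsilon SZ_2|=|Z_1\pm\varepsilon Z_2|$, so the choice $\varepsilon=-1$ merely exchanges the two cross terms $1/|Z_1+Z_2|$ and $1/|Z_1-Z_2|$. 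Consequently the map sends classical solutions of the Newtonian equations to classical solutions.

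\textbf{Step 2: Matching the initial data.} Since the solution is collision-free at $t_0\in(0,1)$, both $Z_i(t_0)$ are nonzero, so the assumption ``tangent to the axes'' means that $Z_i(t_0)$ lies on a single coordinate axis and $\dot Z_i(t_0)$ is parallel to it. Let $R_x(x,y)=(x,-y)$ and $R_y(x,y)=(-x,y)$ be the reflections across the two axes. In each of the four cases I would choose $(S,\varepsilon)$ so that simultaneously $SZ_1(t_0)=Z_1(t_0)$, $\varepsilon SZ_2(t_0)=Z_2(t_0)$, and the same identities hold for the velocities. Explicitly: if both $Z_i(t_0)$ lie on the $x$-axis, use $(R_x,+1)$; if $Z_1(t_0)$ is on the $x$-axis and $Z_2(t_0)$ on the $y$-axis, use $(R_x,-1)$; if those roles are swapped, use $(R_y,-1)$; if both lie on the $y$-axis, use $(R_y,+1)$. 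The verification is direct in each case.

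\textbf{Step 3: Uniqueness and extension.} For the chosen $(S,\varepsilon)$ the pair $(SZ_1(t),\varepsilon SZ_2(t))$ solves the same Newtonian system with the same position and velocity at $t_0$ as $(Z_1(t),Z_2(t))$, so standard uniqueness on the maximal collision-free interval containing $t_0$ forces $(SZ_1(t),\varepsilon SZ_2(t))=(Z_1(t),Z_2(t))$ throughout. This equality is precisely the statement that each $Z_i(t)$ remains on its respective coordinate axis. Continuity of the $H^1$ path then propagates the conclusion to the closed interval $[0,1]$.

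\textbf{Main obstacle.} The argument reduces to bookkeeping once the correct symmetry is identified; the only subtle point is recognising the $\varepsilon=-1$ symmetry of $U$, which relies on the symmetric appearance of $|Z_1+Z_2|$ and $|Z_1-Z_2|$ and is specific to the parallelogram equal-mass four-body potential. Without it the mixed Cases 2 and 3 could not be handled by a single reflection.
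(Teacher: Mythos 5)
Your proof is correct and follows essentially the same route as the paper: the paper asserts that the sets $\{Z_{1y}=\dot Z_{1y}=Z_{2x}=\dot Z_{2x}=0\}$ and $\{Z_{1y}=\dot Z_{1y}=Z_{2y}=\dot Z_{2y}=0\}$ are invariant under the flow and concludes by uniqueness, which is exactly the fixed-point-set-of-a-symmetry argument you spell out with the maps $(Z_1,Z_2)\mapsto(SZ_1,\varepsilon SZ_2)$. The only difference is that you make explicit the reflection symmetries (including the $\varepsilon=-1$ swap of $|Z_1+Z_2|$ and $|Z_1-Z_2|$) that justify the invariance the paper merely asserts.
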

\begin{proof}
The proof follows by the analytic property of solutions of the Newtonian equation. Without loss of generality, we assume $Z_1(t_0)$ is tangent to the $x$-axis, that is
\[Z_{1y}(t_0)=0, \qquad \dot{Z}_{1y}(t_0)=0.  \] 
If $Z_2(t_0)$ is tangent to the $y$-xis, then
\[ Z_{2x}(t_0)=0, \qquad \dot{Z}_{2x}(t_0)=0.   \]
In this case, $(Z_1, \, Z_2)$ is in an invariant set 
$\{(q_1, \, q_2) \, | \, q_{1y}=q_{2y}, \,  q_{1x}=-q_{2x}, \, \dot{q}_{1y}=\dot{q}_{2y}, \,  \dot{q}_{1x}= -\dot{q}_{2x} \} = \{(Z_1, \, Z_2) \, | \, Z_{1y}(t)=0, \, \dot{Z}_{1y}(t)=0, \, Z_{2x}(t)=0, \,  \dot{Z}_{2x}(t)=0 \}$. Hence, $Z_1(t)$ stays on the $x$-axis and $Z_2(t)$ stays on the $y$-axis for all $t \in (0,1)$.\\
If $Z_2(t_0)$ is tangent to the $x$-xis, then
\[ Z_{2y}(t_0)=0, \qquad \dot{Z}_{2y}(t_0)=0.  \]
In this case, $(Z_1, \, Z_2)$ is in an invariant set $
\{(q_1, \, q_2) \, | \, q_{1y}=q_{2y}=0,  \, \dot{q}_{1y}=\dot{q}_{2y}=0\} =\{(Z_1, \, Z_2) \, | \, Z_{1y}(t)=0, \, \dot{Z}_{1y}(t)=0, \, Z_{2y}(t)=0, \,  \dot{Z}_{2y}(t)=0 \}$. Hence, both $Z_1(t)$ and $Z_2(t)$ are on the $x$-axis for all $t \in (0,1)$. \\
Therefore, by the continuity of $Z_1(t)$ and $Z_2(t)$, they must be on the axes for all $t \in [0,1]$. The proof is complete.
\end{proof}

\begin{proposition}\label{ziontheaxesforaninterval}
If there exists a subinterval $[t_1, t_2] \in (0,1)$ such that $Z_1(t)$ or $Z_2(t)$ is on the axes for all $t \in [t_1, t_2]$, then both $Z_1(t)$ and $Z_2(t)$ must be on the axes for all $t \in [0,1]$.
\end{proposition}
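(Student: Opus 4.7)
The plan is to lift local information (one coordinate lies on an axis over a subinterval) to a global constraint via real-analyticity of solutions to Newton's equations, and then to use the equation of motion to force the other coordinate onto an axis as well. The key is the Jacobi-coordinate equation $\ddot Z_i=\partial U/\partial Z_i$ with $U=2/|Z_1|+2/|Z_2|+1/|Z_1+Z_2|+1/|Z_1-Z_2|$, which is real-analytic away from collisions; hence the collision-free solution $(Z_1,Z_2)$ is real-analytic on $(0,1)$.

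By the symmetry between $Z_1$ and $Z_2$ in $K$ and $U$, I may assume it is $Z_1$ that lies on the axes throughout $[t_1,t_2]$. Since $Z_1$ is continuous and nonvanishing on $(0,1)$, it cannot jump between the two axes, so $Z_1$ stays on a single axis on $[t_1,t_2]$; WLOG $Z_{1y}(t)=0$ for $t\in[t_1,t_2]$. Because the analytic function $Z_{1y}$ vanishes on a set with an accumulation point, $Z_{1y}\equiv 0$ on all of $(0,1)$, so $\dot Z_{1y}\equiv 0$ and in particular $\ddot Z_{1y}\equiv 0$.

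Next I would compute $\ddot Z_{1y}=\partial U/\partial Z_{1y}$ along $Z_{1y}=0$. A direct calculation gives
\[
\ddot Z_{1y}\;=\;Z_{2y}\!\left[\frac{1}{|Z_1-Z_2|^{3}}-\frac{1}{|Z_1+Z_2|^{3}}\right].
\]
Setting this equal to zero at every $t\in(0,1)$ forces, pointwise, either $Z_{2y}(t)=0$ or $|Z_1(t)-Z_2(t)|=|Z_1(t)+Z_2(t)|$. The latter is equivalent to $Z_1(t)\cdot Z_2(t)=0$, and since $Z_1(t)=(Z_{1x}(t),0)$ with $Z_{1x}(t)\neq 0$ (no collision), this reduces to $Z_{2x}(t)=0$. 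Hence the analytic functions $Z_{2x}$ and $Z_{2y}$ have zero sets whose union is $(0,1)$. By the usual dichotomy for real-analytic functions on a connected interval, each zero set is either all of $(0,1)$ or discrete; a countable discrete set cannot contain a dense subset, so at least one of $Z_{2x},Z_{2y}$ vanishes identically on $(0,1)$. Thus $Z_2$ also lies on one of the coordinate axes throughout $(0,1)$.

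At this point both $Z_1$ and $Z_2$ are on axes with identically vanishing normal components, so both are tangent to axes at every interior point, and Proposition \ref{ontheaxessametime} (applied at any $t_0\in(t_1,t_2)$) completes the proof; extension to $t\in[0,1]$ follows by continuity. The main obstacle is the rigorous passage from a pointwise ``or'' condition ($Z_{2x}=0$ or $Z_{2y}=0$) to a global conclusion that one of them is identically zero — this is precisely where the analytic dichotomy is indispensable and where a merely smooth (non-analytic) solution would leave room for $Z_2$ to switch between the two axes on a nontrivial set.
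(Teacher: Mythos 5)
Your proof is correct, and its computational core is the same as the paper's: differentiate the axis constraint twice, substitute into the equations of motion, and extract the pointwise alternative $Z_{2y}(t)=0$ or $Z_{2x}(t)=0$. (The paper performs this computation in the original coordinates $q_1,q_2$, obtaining $q_{1y}/|q_1|^3=q_{2y}/|q_2|^3$; you work in the coordinates $Z_1,Z_2$, and the two identities are equivalent.) Where you diverge is in how the alternative is resolved. The paper stays on $[t_1,t_2]$ and argues, in effect, by connectedness: the sets $\{t:Z_{2x}(t)=0\}$ and $\{t:Z_{2y}(t)=0\}$ are closed, they cover the connected interval $[t_1,t_2]$, and they are disjoint because $Z_2(t)\neq0$ there; hence one of them is all of $[t_1,t_2]$, after which Proposition \ref{ontheaxessametime}, applied at an interior point, globalizes the conclusion to $[0,1]$. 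You instead first propagate $Z_{1y}\equiv0$ to all of $(0,1)$ by real-analyticity of the collision-free solution and then invoke the zero-set dichotomy for analytic functions plus a countability argument. Both routes are valid, but your closing claim that the analytic dichotomy is ``indispensable'' is an overstatement: since $Z_2$ never vanishes, it cannot switch between the two axes by elementary continuity and connectedness alone, with no appeal to analyticity. What your route buys is a direct proof that $Z_1$ remains on the same axis on all of $(0,1)$ without passing through Proposition \ref{ontheaxessametime}; what it costs is reliance on the (standard but uninvoked in the paper) fact that solutions are real-analytic in time away from collisions, which is a heavier tool than the argument actually requires.
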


\begin{proof}
Without loss of generality, we assume $Z_1(t)$ stays on the $x$-axis for $t \in [t_1, \, t_2]$. That is, for all $t \in [t_1, \, t_2]$,
\[q_{1y}(t)=q_{2y}(t), \quad \dot{q}_{1y}(t)= \dot{q}_{2y}(t), \quad   \ddot{q}_{1y}(t)= \ddot{q}_{2y}(t).\]
By the Newtonian equations of $q_{1y}$ and $q_{2y}$, it follows that
\[\frac{q_{1y}(t)}{4|q_1(t)|^3}=\frac{q_{2y}(t)}{4|q_2(t)|^3},  \quad   \forall \, \, t \in [t_1, \, t_2].\]
Hence,
\[q_{1y}(t)=0, \quad \text{or} \quad  |q_1(t)|=|q_2(t)|, \qquad \forall \, \, t \in [t_1, \,  t_2]. \]
Since $Z_1(t) \not= 0$ for all $t \in (0,1)$, it implies that 
\[q_{1y}(t)=0, \quad\text{or} \quad q_{1x}(t)=-q_{2x}(t), \qquad \forall \, \, t \in [t_1,  \, t_2]. \]
That is, 
\[ Z_{2y}(t)=0, \quad\text{or} \quad  Z_{2x}(t)=0, \qquad \forall \, \, t \in [t_1,  \, t_2]. \]
Note that $Z_2(t) \not=0$ for $t \in [t_1,  \, t_2]$. By the smoothness of $Z_2(t)$ in $[t_1, \, t_2]$, it implies that $Z_2$ must be on of the axes for all $t \in [t_1, \, t_2]$. By Proposition \ref{ontheaxessametime}, $Z_1(t)$ and $Z_2(t)$ must stay on the axes for all $t \in [0,1]$.
\end{proof}

Next, we apply Proposition \ref{ontheaxessametime}, Proposition \ref{ziontheaxesforaninterval} and Lemma \ref{potentialinequality} to study the properties of the two minimizers $\tilde{q}(t) \,  (t \in [0,1])$ in \eqref{actionqtilde} and $q^{*}(t) \, (t \in [0,1])$ in \eqref{actionqtilde02}. 
\begin{theorem}\label{localminimizer}
Assume $\theta \in (0, \pi/2)$. The actions of the two minimizers $\tilde{q}(t) \, (t \in [0,1])$ and $q^{*}(t) \, (t \in [0,1])$ satisfy
\[\mathcal{A}(\tilde{q}) >\mathcal{A}(q^{*}). \]
\end{theorem}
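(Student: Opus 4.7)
The inclusion $V_1(\theta)\subset\widetilde{V}_1(\theta)$ gives the weak bound $\mathcal{A}(q^*)\le\mathcal{A}(\tilde{q})$ immediately. My plan is to upgrade it by using the reflected path $(\widetilde{Z}_1,\widetilde{Z}_2)$ from \eqref{Z1Z2widetilde} as an admissible competitor in $\Sigma(\theta)^*$ and then arguing that its action cannot match $\mathcal{A}(\tilde{q})$.

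Admissibility is straightforward: at $t=0$ both $Z_1(0)$ and $Z_2(0)$ lie on the negative $x$-axis, where componentwise reflection acts trivially and the configuration remains in $V_0$; at $t=1$, the identities $\widetilde{Z}_1(1)=-Z_1(1)\in\overline{\mathsf{Q}_2}$ and $\widetilde{Z}_2(1)=Z_2(1)\in\overline{\mathsf{Q}_3}$ preserve the perpendicularity $Z_1(1)\perp Z_2(1)$, giving a rectangular endpoint in $\widetilde{V}_1(\theta)$ (with the $b_2$-parameter flipped in sign). Componentwise reflection preserves $|\dot{Z}_i|^2$ almost everywhere, and Lemma~\ref{potentialinequality} yields the pointwise bound $U(\widetilde{Z}_1,\widetilde{Z}_2)\le U(Z_1,Z_2)$ on $(0,1)$, so
\[
\mathcal{A}(q^*)\le\mathcal{A}(\widetilde{Z}_1,\widetilde{Z}_2)\le\mathcal{A}(\tilde{q}).
\]

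To rule out equality I argue by contradiction. If $\mathcal{A}(q^*)=\mathcal{A}(\tilde{q})$, then $(\widetilde{Z}_1,\widetilde{Z}_2)$ attains the infimum over $\Sigma(\theta)^*$ and is thus a collision-free classical solution on $(0,1)$ by Marchal--Chenciner, hence real-analytic there. Since $|f|$ fails to be real-analytic at any isolated sign change of an analytic $f$, the components $Z_{1x},Z_{1y},Z_{2x},Z_{2y}$ must each have constant sign on $(0,1)$, confining each of $Z_1(t),Z_2(t)$ to a single closed quadrant. The boundary data $Z_1(0)=(-a_{21},0)$ and $Z_1(1)=(2b_{21}\sin\theta,-2b_{21}\cos\theta)$ with $a_{21},b_{21}\ge 0$ and $\sin\theta,\cos\theta>0$ then force $a_{21}=0$ or $b_{21}=0$, i.e.\ $\tilde{q}$ suffers a binary boundary collision between bodies $1$ and $2$. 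For $\theta\in(0,\pi/7]$ this directly contradicts Lemma~\ref{noncollision}.

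The principal obstacle is closing the residual range $\theta\in(\pi/7,\pi/2)$, where Lemma~\ref{noncollision} is not available. Here I would combine an extension of the boundary-collision lower bound of Lemma~\ref{actionestimatelowerbdd} for $\mathcal{A}(\tilde{q})$ with an independent upper bound on $\mathcal{A}(q^*)$ coming from the collision-free retrograde double-double orbit of \cite{CH4}, which already lies in $\Sigma(\theta)^*$; Propositions~\ref{ontheaxessametime} and \ref{ziontheaxesforaninterval} would then rule out the only remaining degenerate scenario, in which the hypothetical equality forces $(Z_1,Z_2)$ onto the coordinate axes for all $t\in[0,1]$, contradicting the non-degenerate rectangle $\tilde{q}(1)\in V_1(\theta)$ for $\theta\in(0,\pi/2)$.
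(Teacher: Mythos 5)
Your reduction to the reflected competitor $(\widetilde{Z}_1,\widetilde{Z}_2)\in\Sigma(\theta)^{*}$ and the chain $\mathcal{A}(q^{*})\le\mathcal{A}(\widetilde{Z}_1,\widetilde{Z}_2)\le\mathcal{A}(\tilde{q})$ is exactly the paper's mechanism, and your way of ruling out equality in the interior is genuinely different from the paper's: where the paper argues that equality forces every axis crossing of $Z_i$ to be non-transversal and then invokes the invariant-set Propositions \ref{ontheaxessametime} and \ref{ziontheaxesforaninterval}, you observe that equality makes both $(Z_1,Z_2)$ and $(\widetilde{Z}_1,\widetilde{Z}_2)$ real-analytic on $(0,1)$, so each component $Z_{ix},Z_{iy}$ and its absolute value are analytic, hence no component changes sign; this is correct (an odd-order zero of an analytic $f$ destroys analyticity of $|f|$, and an identically vanishing component is handled trivially) and it cleanly pins $Z_1$ to a single closed quadrant, forcing $a_{21}=0$ or $b_{21}=0$, i.e.\ a binary boundary collision $q_1=q_2$ at $t=0$ or $t=1$.

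The genuine gap is the last step. You discharge the forced boundary collision by citing Lemma \ref{noncollision}, which is only proved for $\theta\in(0,\pi/7]$, whereas Theorem \ref{localminimizer} is asserted for all $\theta\in(0,\pi/2)$; your sketch for the residual range $(\pi/7,\pi/2)$ will not work as stated, because the quantitative comparison between the collision lower bound of Lemma \ref{actionestimatelowerbdd} and any explicit upper bound is precisely what limits the existence result to $\theta\le\pi/7$ in the first place. The fix is already in your hands and requires no estimates: under the equality hypothesis $(\widetilde{Z}_1,\widetilde{Z}_2)$ attains $\inf_{\Sigma(\theta)^{*}}\mathcal{A}$, and since $|\widetilde{Z}_1|=|Z_1|$ the forced vanishing of $Z_1$ at $t=0$ or $t=1$ transfers to $\widetilde{Z}_1$; thus $(\widetilde{Z}_1,\widetilde{Z}_2)$ would be an action minimizer of \eqref{actionqtilde02} with a boundary collision, contradicting Chen's result \cite{CH4} that minimizers of this problem are collision-free on all of $[0,1]$ for every $\theta\in(0,\pi/2)$. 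This is in effect how the paper handles the collision case (it disposes of colliding $\tilde{q}$ by comparing against the collision-free $q^{*}$, not by invoking Lemma \ref{noncollision}), and substituting it for your appeal to Lemma \ref{noncollision} completes your argument on the full range $\theta\in(0,\pi/2)$.
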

\begin{proof}
Recall that $(Z_1(t), \, Z_2(t)) \, (t \in [0,1])$ is the path corresponding to $\tilde{q}(t) \, (t \in [0,1])$ and $(Z^{*}_1(t), \, Z^{*}_2(t)) \, (t \in [0,1])$ corresponds to $q^{*}(t) \, (t \in [0,1])$. By the definition \eqref{Z1Z2widetilde} of $\widetilde{Z}_1$ and $\widetilde{Z}_2$, it implies that $\, \widetilde{Z}_1(t) \in \overline{\mathsf{Q}_2} \, $ and $ \, \widetilde{Z}_2(t) \in  \overline{\mathsf{Q}_3} \, $ for all $t \in [0,1]$. 
The kinetic energy satisfies
\begin{equation}\label{Keqaltilde}
K(Z_1(t), Z_2(t)) = K( \widetilde{Z}_1(t), \widetilde{Z}_2(t)), \qquad \forall \, \, t \in (0, 1). 
\end{equation}
And by Lemma \ref{potentialinequality},  
\begin{equation}\label{Uineqaltilde}
U(|Z_1(t)|, |Z_2(t)|, \Delta(t)) \geq U( |\widetilde{Z}_1(t)|, |\widetilde{Z}_2(t)|, \widetilde{\Delta}(t)), \qquad \forall  \, \, t \in (0, 1). 
\end{equation}
Note that $(\widetilde{Z}_1, \widetilde{Z}_2)$ is in the functional space $\Sigma(\theta)^{*}$. \eqref{Keqaltilde} and \eqref{Uineqaltilde} imply that
\begin{equation}\label{qqstarandelse}
\mathcal{A}(\tilde{q})= \mathcal{A}(Z_1, Z_2) \geq \mathcal{A}(\widetilde{Z}_1, \widetilde{Z}_2) \geq \mathcal{A}(q^{*}).   
\end{equation} 
It is known \cite{CH4} that the minimizer $q^{*}(t)$ is collision-free for $t \in [0,1]$. If the minimizer $(Z_1, Z_2)$ has a collision on the boundary, it follows that 
\[\mathcal{A}(Z_1, Z_2) =\mathcal{A}(\tilde{q}) >\mathcal{A}(q^{*}). \]

Next, we consider the case when $(Z_1, Z_2)$ is free of collision in $[0,1]$. That is, $Z_i(0) \not=0$ and $Z_i(1) \not=0 \, (i=1,2)$. By the definition of the boundary sets $V_0$ in \eqref{qstartspace001} and $V_1(\theta)$ in \eqref{qendspace001}, it implies that  $Z_1(1) \in \mathsf{Q}_4$ and $Z_2(1)\in \mathsf{Q}_3$. 

We then show the inequality $\mathcal{A}(\tilde{q}(t)) >\mathcal{A}(q^{*}(t))$ by contradiction. If not, by inequality \eqref{qqstarandelse}, it follows that
\begin{equation}\label{allequalz1z2tildez1z2}
 \mathcal{A}(\tilde{q})= \mathcal{A}(Z_1, Z_2) = \mathcal{A}(\widetilde{Z}_1, \widetilde{Z}_2) = \mathcal{A}(q^{*}).   
 \end{equation}
Hence, both $(Z_1(t), Z_2(t))$ and $(\widetilde{Z}_1(t), \widetilde{Z}_2(t))$ are smooth solutions of the Newtonian equations for $t \in [0,1]$. It implies that all of the crossings of $Z_i(t)$ with the axes must be non-transversal. Since $Z_1(0) \not=0$ is on the negative $x$-axis and $Z_1(1) \in \mathsf{Q}_4$, it follows that there exists some $t_0 \in (0,1)$, such that $Z_1=Z_1(t)$ crosses the $y$-axis non-transversally at $t=t_0$. 

If $Z_2(t_0)$ is on the axes, then it either crosses the axes non-transversally or tangent to the axes. By Proposition \ref{ontheaxessametime}, it implies that both $Z_1$ and $Z_2$ stay on the axes for all $t \in [0,1]$. Contradiction to the boundary condition $Z_1(1) \in \mathsf{Q}_4$! Therefore, $Z_2(t_0)$ is away from the axes.

By continuity, there exists a small enough $\epsilon_0>0$, such that $Z_2$ stays strictly inside the same quadrant for all $t \in [t_0-\epsilon_0, t_0+\epsilon_0]$. However, $Z_1(t)$ stays in two adjacent quadrants in the two intervals: $[t_0-\epsilon_0, t_0]$ and $[t_0, t_0+\epsilon_0]$. By Lemma \ref{potentialinequality}, $\mathcal{A}(Z_1, \, Z_2) = \mathcal{A}(\widetilde{Z}_1, \, \widetilde{Z}_2)$ if and only if $Z_1(t)$ and $Z_2(t)$ are in two adjacent closed quadrants for all $t \in (0,1)$. Hence, $Z_1(t)$ must be on the axes in one of the two intervals: $[t_0-\epsilon_0, t_0]$ and $[t_0, t_0+\epsilon_0]$. By Proposition \ref{ziontheaxesforaninterval}, it follows that both $Z_1(t)$ and $Z_2(t)$ stay on the axes for all $t \in [0,1]$. Contradiction to the fact that $Z_1(1) \in \mathsf{Q}_4$!

Therefore, $\mathcal{A}(\tilde{q}(t)) >\mathcal{A}(q^{*}(t))$. The proof is complete.

\end{proof}

Actually, by applying similar arguments as in the proof of Theorem \ref{localminimizer}, we can show that in the minimizer $q^{*}$, the boundary configuration $q^{*}(1)$ must be in the subset $V_2$:
\begin{equation}\label{qendspace002}
V_2 = \left\{Q_e=\begin{bmatrix}
-b_1 & -b_2 \\
-b_1  & b_2 \\
b_1  & -b_2 \\
b_1  &  b_2
\end{bmatrix} R(\theta) \,  \Bigg| \, b_1 \geq 0, \, b_2 \leq 0 \right\}.
\end{equation}
Since $q^{*}(t)$ is collision-free in $[0,1]$, it implies that 
\[Z^{*}_1(1)=q^{*}_1(1)-q^{*}_2(1) \in \mathsf{Q}_2, \qquad Z^{*}_2(1)=q^{*}_1(1)+q^{*}_2(1) \in \mathsf{Q}_3. \]

\begin{theorem}\label{adjacentquad}
Let $(Z^{*}_1, \, Z^{*}_2)=(Z^{*}_1(t), \, Z^{*}_2(t)) \, (t \in [0,1])$ be the new coordinate of the minimizer $q^{*}(t) \, (t \in [0,1])$ in \eqref{actionqtilde02}. Then 
 $\, Z^{*}_1(t) \in \overline{\mathsf{Q}_2} \, $ and $ \, Z^{*}_2(t) \in  \overline{\mathsf{Q}_3} \, $ for all $t \in [0,1]$.
\end{theorem}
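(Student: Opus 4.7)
The plan is to run the reflection technique of Theorem \ref{localminimizer} one more time, but now applied directly to $q^*$ itself. The crucial difference from that theorem is that the target set $\widetilde{V}_1(\theta)$ of $q^*$ is a full two-dimensional subspace (not a quadrant-type cone), so the componentwise reflection stays in $\Sigma(\theta)^*$ automatically; minimality of $q^*$ then forces Lemma \ref{potentialinequality} to be saturated, and a careful smoothness analysis will pin down the signs of the coordinates of $Z_1^*$ and $Z_2^*$.

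First, I would define the reflected path
\[
\widetilde{Z}_1^*(t) := (-|Z^*_{1x}(t)|,\ |Z^*_{1y}(t)|), \qquad \widetilde{Z}_2^*(t) := (-|Z^*_{2x}(t)|,\ -|Z^*_{2y}(t)|),
\]
so that $\widetilde{Z}_1^*(t)\in\overline{\mathsf{Q}_2}$ and $\widetilde{Z}_2^*(t)\in\overline{\mathsf{Q}_3}$ for every $t$. At $t=0$ the reflection is trivial since $Z_i^*(0)$ lies on the non-positive $x$-axis. At $t=1$, the preceding observation $q^*(1)\in V_2$ shows that $Z_1^*(1)$ lies on the line through the origin along $(-\sin\theta,\cos\theta)$ and $Z_2^*(1)$ on the line along $(-\cos\theta,-\sin\theta)$; componentwise reflection preserves each of these lines, so the reflected endpoint stays in $\widetilde{V}_1(\theta)$. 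Hence $(\widetilde{Z}_1^*,\widetilde{Z}_2^*)\in\Sigma(\theta)^*$. Kinetic energy is invariant under the reflection, and Lemma \ref{potentialinequality} gives $U(|Z_1^*|,|Z_2^*|,\Delta)\geq U(|\widetilde{Z}_1^*|,|\widetilde{Z}_2^*|,\widetilde{\Delta})$ pointwise, so $\mathcal{A}(\widetilde{Z}_1^*,\widetilde{Z}_2^*)\leq \mathcal{A}(q^*)$. Minimality of $q^*$ forces equality, and therefore both paths are action minimizers in $\Sigma(\theta)^*$, hence collision-free classical (and thus real analytic on $(0,1)$) solutions of the Newtonian equations.

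The step I expect to be the main obstacle is showing that each of the four coordinate functions $Z^*_{1x},Z^*_{1y},Z^*_{2x},Z^*_{2y}$ does not change sign on $[0,1]$. Consider $Z^*_{1x}$: its reflected version $\widetilde{Z}^*_{1x}=-|Z^*_{1x}|$ is smooth, but if the real analytic function $Z^*_{1x}$ underwent a simple sign change on $(0,1)$ then $|Z^*_{1x}|$ would have a corner there, and demanding smoothness of $|Z^*_{1x}|$ at a zero with sign change would force every derivative of $Z^*_{1x}$ to vanish, so by analyticity $Z^*_{1x}\equiv 0$ on $(0,1)$. In that degenerate case Proposition \ref{ziontheaxesforaninterval} forces both $Z_1^*$ and $Z_2^*$ onto coordinate axes throughout $[0,1]$, contradicting the fact that $Z_1^*(1)$ is a nonzero multiple of $(-\sin\theta,\cos\theta)$, which lies on no axis for $\theta\in(0,\pi/2)$. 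Hence $Z^*_{1x}$ is single-signed, and its boundary values $Z^*_{1x}(0)\leq 0$ and $Z^*_{1x}(1)\leq 0$ yield $Z^*_{1x}\leq 0$ on $[0,1]$.

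Applying the identical argument to the remaining three coordinate functions, and combining with the boundary data together with the collision-freeness of $q^*$ at $t=1$ (which rules out the degenerate scenarios where the non-axis boundary value would be forced to vanish), yields $Z^*_{1y}\geq 0$, $Z^*_{2x}\leq 0$, and $Z^*_{2y}\leq 0$ throughout $[0,1]$. The four sign inequalities together give $Z_1^*(t)\in\overline{\mathsf{Q}_2}$ and $Z_2^*(t)\in\overline{\mathsf{Q}_3}$ for every $t\in[0,1]$, as claimed.
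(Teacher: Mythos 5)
Your proposal is correct, and its skeleton --- reflect $Z^*_i$ componentwise into $\overline{\mathsf{Q}_2}\times\overline{\mathsf{Q}_3}$, check that the reflected path stays in $\Sigma(\theta)^*$, use Lemma \ref{potentialinequality} plus minimality of $q^*$ to force $\mathcal{A}(Z^*_1,Z^*_2)=\mathcal{A}(\widetilde{Z}^*_1,\widetilde{Z}^*_2)$, and conclude that both paths are collision-free classical solutions on $(0,1)$ --- is exactly the paper's. Where you genuinely diverge is the endgame. The paper extracts from the equality case of Lemma \ref{potentialinequality} that $Z^*_1(t)$ and $Z^*_2(t)$ must lie in two adjacent closed quadrants for every $t$, then runs a case analysis on a hypothetical axis crossing at some $t_0$: smoothness of both $Z^*_1$ and $\widetilde{Z}^*_1$ only rules out \emph{transversal} crossings, so the paper needs the adjacency condition (when $Z^*_2(t_0)$ is off the axes) and Proposition \ref{ontheaxessametime} (when $Z^*_2(t_0)$ is on the axes) to dispose of non-transversal crossings, falling back on Proposition \ref{ziontheaxesforaninterval} for the degenerate subcase. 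You instead work coordinate by coordinate: real analyticity of each component on $(0,1)$ means a sign change can only occur at a finite odd-order zero, and at such a zero $|Z^*_{1x}|$ fails to be $C^\infty$, contradicting the smoothness of the reflected minimizer; the infinite-order alternative forces the component to vanish identically, which Proposition \ref{ziontheaxesforaninterval} turns into the same contradiction with $Z^*_1(1)\in\mathsf{Q}_2$. This buys you two things: you never need the characterization of the equality case in Lemma \ref{potentialinequality} (only the inequality itself, to get saturation), and you dispose of transversal and non-transversal crossings in a single stroke, bypassing Proposition \ref{ontheaxessametime} entirely. The price is an explicit appeal to analyticity of collision-free solutions, which the paper's argument does not need (it only uses $C^\infty$, indeed essentially $C^1$, regularity plus its two propositions). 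Both arguments share the reliance on the unproved preliminary claim that $q^*(1)\in V_2$, so no additional gap is introduced there.
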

\begin{proof}
Note that the boundaries of $(Z^{*}_1, \, Z^{*}_2)=(Z^{*}_1(t), \, Z^{*}_2(t)) \, (t \in [0,1])$ satisfy
\[Z^{*}_1(0)\in \overline{\mathsf{Q}_2}, \quad Z^{*}_1(1) \in \mathsf{Q}_2, \quad  Z^{*}_2(0)\in \overline{\mathsf{Q}_3}, \quad Z^{*}_2(1) \in \mathsf{Q}_3. \] 
By \eqref{Z1Z2widetilde}, $\widetilde{Z}^{*}_1(t)= (-|Z^{*}_{1x}(t)|, \, |Z^{*}_{1y}(t)|)$ and  $\widetilde{Z}^{*}_2(t)= (-|Z^{*}_{2x}(t)|, \, -|Z^{*}_{2y}(t)|)$. It is clear that $\, \widetilde{Z}^{*}_1(t) \in \overline{\mathsf{Q}_2} \, $ and $ \, \widetilde{Z}^{*}_2(t) \in  \overline{\mathsf{Q}_3} \, $ for all $t \in [0,1]$. Next, we show that 
\begin{equation}\label{Z12Z12tilde}
Z^{*}_1(t)=\widetilde{Z}^{*}_1(t), \quad   Z^{*}_2(t)=\widetilde{Z}^{*}_2(t), \qquad  \forall \, \, t \in [0,1]. 
\end{equation}
By the definition of $(\widetilde{Z}^{*}_1, \, \widetilde{Z}^{*}_2)=(\widetilde{Z}^{*}_1(t), \, \widetilde{Z}^{*}_2(t)) \, (t \in [0,1])$, this new path is also in the functional space $\Sigma(\theta)^{*}$. Note that $(Z^{*}_1, \, Z^{*}_2)=(Z^{*}_1(t), \, Z^{*}_2(t)) \, (t \in [0,1])$ minimizes the action functional $\mathcal{A}$ in \eqref{actionqtilde02}, which implies that 
\begin{equation}\label{AZ01}
 \mathcal{A}(Z^{*}_1, \, Z^{*}_2) \leq \mathcal{A}(\widetilde{Z}^{*}_1, \, \widetilde{Z}^{*}_2).
 \end{equation}
On the other hand, note that the kinetic energy satisfies
\[\int_0^1 K(Z^{*}_1, \, Z^{*}_2) \, dt = \int_0^1 K(\widetilde{Z}^{*}_1, \, \widetilde{Z}^{*}_2) \, dt,  \]
where $K(Z_1, Z_2)$ is defined in \eqref{KformJacobi}. By Lemma \ref{potentialinequality}, 
\[ \int_0^1 U(Z^{*}_1, \, Z^{*}_2) \, dt \geq \int_0^1 U(\widetilde{Z}^{*}_1, \, \widetilde{Z}^{*}_2) \, dt,  \]
while the equality holds if and only if $Z^{*}_1(t)$ and $Z^{*}_2(t)$ are in two adjacent closed quadrant. It implies that the action of the two paths satisfy
\begin{equation}\label{AZ02}
\mathcal{A}(Z^{*}_1, \, Z^{*}_2) \geq \mathcal{A}(\widetilde{Z}^{*}_1, \, \widetilde{Z}^{*}_2).
\end{equation}
By \eqref{AZ01} and \eqref{AZ02}, it follows that 
\begin{equation}\label{AZ03}
\mathcal{A}(Z^{*}_1, \, Z^{*}_2) = \mathcal{A}(\widetilde{Z}^{*}_1, \, \widetilde{Z}^{*}_2).
\end{equation}
Therefore, both paths  \[(Z^{*}_1, \, Z^{*}_2)=(Z^{*}_1(t), \, Z^{*}_2(t)) \, (t \in [0,1]) \quad \text{and} \quad (\widetilde{Z}^{*}_1, \, \widetilde{Z}^{*}_2)=(\widetilde{Z}^{*}_1(t), \, \widetilde{Z}^{*}_2(t)) \, (t \in [0,1])\] minimize the action functional in \eqref{actionqtilde02}. By  \cite{CA, Mar}, both paths are collision-free in $(0,1)$. 

If \eqref{Z12Z12tilde} does NOT hold,  then there exists some $t_0 \in (0,1)$ such that $Z^{*}_1(t_0)$ or $Z^{*}_2(t_0)$ crosses one of the axes. Without loss of generality, we assume that $Z^{*}_1(t_0)$ crosses one of the axes. Note that both $Z^{*}_1(t)$ and $\widetilde{Z}^{*}_1(t)$ are smooth at $t=t_0$, it implies that this crossing must be non-transversal. If $Z^{*}_2(t_0)$ is away from the axes, then there exists a small $\epsilon_0>0$, such that $Z^{*}_2(t)$ stays in the same quadrant for all $t \in [t_0-\epsilon_0, t_0+\epsilon_0]$. However, when $t \in [t_0-\epsilon_0, t_0+\epsilon_0]$, $Z^{*}_1(t)$ will be in two adjacent quadrants. 

By Lemma \ref{potentialinequality}, $\mathcal{A}(Z^{*}_1, \, Z^{*}_2) = \mathcal{A}(\widetilde{Z}^{*}_1, \, \widetilde{Z}^{*}_2)$ implies that $Z^{*}_1(t)$ and $Z^{*}_2(t)$ are in two adjacent closed quadrant for all $t \in [0,1]$. It follows that $Z^{*}_1(t)$ is on the axes in one of the two intervals: $[t_0-\epsilon_0, t_0]$ and $[t_0, t_0+\epsilon_0]$. By Proposition \ref{ziontheaxesforaninterval}, both $Z^{*}_1(t)$ and $Z^{*}_2(t)$ are on the axes for all $t \in [0,1]$. Contradict to the assumption that $Z^{*}_2(t_0)$ is away from the axes! Hence, $Z^{*}_2(t_0)$ must be on the axes. 

If $Z^{*}_2(t_0)$ crosses the axes, by the smoothness of $Z^{*}_2$ and $\widetilde{Z}^{*}_2$ at $t=t_0$, the crossing should be non-transversal. If $Z^{*}_2(t_0)$ only touches the axes, it is clear that it should be tangent to it. Hence, both $Z^{*}_1(t_0)$ and $Z^{*}_2(t_0)$ are tangent to the axes. By Proposition \ref{ontheaxessametime}, it follows that both $Z^{*}_1(t)$ and $Z^{*}_2(t)$ are on the axes for all $t\in [0,1]$. Contradiction to $Z^{*}_1(1) \in \mathsf{Q}_2, \, Z^{*}_2(1) \in \mathsf{Q}_3$! 

Therefore, 
\[Z^{*}_1(t)=\widetilde{Z}^{*}_1(t), \quad   Z^{*}_2(t)=\widetilde{Z}^{*}_2(t), \qquad  \forall \, \,  t \in [0,1],\]
which implies that $\, Z^{*}_1(t) \in \overline{\mathsf{Q}_2} \, $ and $ \, Z^{*}_2(t) \in  \overline{\mathsf{Q}_3} \, $ for all $t \in [0,1]$. The proof is complete.
\end{proof}

\appendix 
 \section*{Appendix: Data of the 8 test paths}
In the appendix, we give the data of the 8 test paths $\mathcal{P}_{test}= \mathcal{P}_{test, \theta}$ and draw 8 pictures of their action $\mathcal{A}(\mathcal{P}_{test})$ comparing to the lower bound of action $g_1(\theta)$ for minimizers with boundary collisions. 

In the following tables (from Table \ref{table1} to Table \ref{table8}), the position coordinates of $\bar{q}_{i, j} =\bar{q}_{i}(\frac{j}{10}) \,  (i=1,2; j=0,1,2, \dots, 10)$ of $\mathcal{P}_{test}= \mathcal{P}_{test, \theta}$ are given for each interval of $\theta$, where the positions $\bar{q}_{3, j}, \bar{q}_{4, j}\, (j=0,1,2, \dots, 10)$ satisfy
\[\bar{q}_{3, j}=-\bar{q}_{2, j}, \qquad \bar{q}_{4, j}=-\bar{q}_{1, j}.\]
To cover the interval $\theta \in (0, \, \pi/7]$, we take 8 different $\theta_0$. For each interval corresponding to $\theta_0$, we define a test path as in Table \ref{table1} to Table \ref{table8}.
\begin{enumerate}
\item $\mathbf{\theta_0= 0.004 \pi}$ : a test path $\mathcal{P}_{test}= \mathcal{P}_{test, \, \theta}$ is defined for $\mathbf{\theta \in (0, \, 0.008 \pi ]}$, where $\mathcal{P}_{test}$ is defined by connecting the adjacent points $\bar{q}_{i}(\frac{j}{10})$ \, (i=1,2,3,4, j=1,2,$\dots$, 10) in Table \ref{table1}, where $\bar{q}_3=-\bar{q}_2$ and $\bar{q}_4=-\bar{q}_1$;

\begin{table}[!htbp]
\centering
\small
\scalebox{0.8}{\begin{tabular}{ |c|c|c|}
 \hline
\multicolumn{3}{|c|}{$\mathbf{\theta_0= 0.004 \pi, \, \, \quad \quad \quad  \theta \in ( 0, \, 0.008 \pi]}$} \\
\hline
$t$ &  $\bar{q}_1$ &$\bar{q}_2$    \\
  \hline
  $0$ &  $(-15.1518, \, 0 )$ &  $( -14.2200, \, 0)$ \\
  \hline
  $0.1$ & $(-15.146042, \,  -0.091279146)$  &  $ ( -14.225735, \,  0.054329153)$   \\
 \hline
  $0.2$ & $(-15.128907, \,  -0.18076174)$  &  $(  -14.242800, \,  0.10686181)$   \\
   \hline
  $0.3$ & $(  -15.100813, \, -0.26669405 )$  & $(-14.270778, \,  0.15584427)$  \\
   \hline
  $0.4$ &  $( -15.062445, \, -0.34740702)$  & $(-14.308983, \,  0.19960753) $  \\
     \hline
   $0.5$ & $( -15.014739, \, -0.42135648)$ &  $ ( -14.356480, \,  0.23660749) $    \\
   \hline
  $0.6$ & $( -14.958863, \,  -0.48716069)$ &  $  (-14.412101, \,  0.26546247)$   \\
     \hline
   $0.7$ & $(-14.896186, \,  -0.54363450)$ & $ (  -14.474476, \,  0.28498734) $ \\
      \hline
   $0.8$ &   $( -14.828251, \,  -0.58981914)$ &  $(-14.542064, \,  0.29422344)  $ \\
      \hline
   $0.9$ & $( -14.756734, \,  -0.62500716)$ &  $( -14.613186, \,  0.29246336)  $  \\
      \hline
  $1$ &  $(-14.690399, \,  -0.46419802) R(\theta)$  & $ ( -14.690399, \,  0.46419802)R(\theta)$  \\
 \hline
\end{tabular}}
\caption{\label{table1} The positions of $\bar{q}_{i, j}=\bar{q}_{i}(\frac{j}{10}) \, (i=1,2, \, j=0,1,2, \dots, 10)$ in the path $\mathcal{P}_{test}= \mathcal{P}_{test, \, \theta}$ corresponding to $\theta  \in (0, \, 0.008 \pi]$.}
\end{table}

\item $\mathbf{\theta_0= 0.018 \pi}$ : a test path $\mathcal{P}_{test}= \mathcal{P}_{test, \, \theta}$ is defined for $\mathbf{\theta \in [0.008\pi, \, 0.028 \pi ]}$, where $\mathcal{P}_{test}$ is defined by connecting the adjacent points $\bar{q}_{i}(\frac{j}{10})$ \, (i=1,2,3,4, j=1,2,$\dots$, 10) in Table \ref{table2}, where $\bar{q}_3= -\bar{q}_2$ and $\bar{q}_4= -\bar{q}_1$;
\begin{table}[!htbp]
\centering
\small
\scalebox{0.8}{\begin{tabular}{ |c|c|c|}
 \hline
\multicolumn{3}{|c|}{$\mathbf{\theta_0= 0.018 \pi, \, \, \quad \quad \quad  \theta \in [0.008 \pi , \, 0.028 \pi]}$} \\
\hline
$t$ &  $\bar{q}_1$ &$\bar{q}_2$    \\
  \hline
  $0$ &  $(  -5.8458,\, 0)$ &  $( -4.9361,\, 0)$ \\
  \hline
  $0.1$ & $( -5.8396919,\, -0.10432174)$  &  $ ( -4.9420342,\, 0.043361970)$   \\
 \hline
  $0.2$ & $(  -5.8215282,\, -0.20668454)$  &  $( -4.9596764,\, 0.084766386)$   \\
   \hline
  $0.3$ & $(-5.7917856,\, -0.30518228)$  & $(-4.9885502,\, 0.12230858)$  \\
   \hline
  $0.4$ &  $( -5.7512441,\, -0.39801280)$  & $(-5.0278764,\, 0.15418793) $  \\
     \hline
   $0.5$ & $(  -5.7009650,\, -0.48352589)$ &  $ (-5.0765944,\, 0.17875591) $    \\
   \hline
  $0.6$ & $( -5.6422617,\,  -0.56026667)$ &  $  (-5.1333916,\, 0.19455952)$   \\
     \hline
   $0.7$ & $(-5.5766644,\,  -0.62701339)$ & $ (-5.1967388,\, 0.20037900) $ \\
      \hline
   $0.8$ &   $(  -5.5058791,\, -0.68280867)$ &  $(-5.2649309,\, 0.19525917)  $ \\
      \hline
   $0.9$ & $( -5.4317422,\,  -0.72698371)$ &  $(-5.3361320,\, 0.17853356)  $  \\
      \hline
  $1$ &  $( -5.3905192,\,  -0.45523850) R(\theta)$  & $ (-5.3905192,\, 0.45523850)R(\theta)$  \\
 \hline
\end{tabular}}
\caption{\label{table2} The positions of $\bar{q}_{i, j}=\bar{q}_{i}(\frac{j}{10}) \, (i=1,2, \, j=0,1,2, \dots, 10)$ in the path $\mathcal{P}_{test}= \mathcal{P}_{test, \, \theta}$ corresponding to $\theta  \in [0.008 \pi, \, 0.028 \pi]$.}
\end{table}

 \item $\mathbf{\theta_0= 0.03 \pi}$ : a test path $\mathcal{P}_{test}= \mathcal{P}_{test, \, \theta}$ is defined for $\mathbf{\theta \in [0.028\pi, \, 0.034 \pi ]}$, where $\mathcal{P}_{test}$ is defined by connecting the adjacent points $\bar{q}_{i}(\frac{j}{10})$ \, (i=1,2,3,4, j=1,2,$\dots$, 10) in Table \ref{table3}, where $\bar{q}_3= -\bar{q}_2$ and $\bar{q}_4= -\bar{q}_1$;
\begin{table}[!htbp]
\centering
\small
\scalebox{0.8}{\begin{tabular}{ |c|c|c|}
 \hline
\multicolumn{3}{|c|}{$\mathbf{\theta_0= 0.03 \pi, \, \, \quad \quad \quad  \theta \in [0.028 \pi , \, 0.034 \pi]}$} \\
\hline
$t$ &  $\bar{q}_1$ &$\bar{q}_2$    \\
  \hline
  $0$ &  $(  -4.2855, \, 0)$ &  $(-3.3909, \, 0)$ \\
  \hline
  $0.1$ & $(   -4.2791142, \, -0.11069104)$  &  $ ( -3.3969393, \, 0.038371747)$   \\
 \hline
  $0.2$ & $( -4.2601331, \, -0.21930061 )$  &  $( -3.4148818, \, 0.074666326)$   \\
   \hline
  $0.3$ & $(-4.2290790, \, -0.32380829 )$  & $(  -3.4442064, \, 0.10686782)$  \\
   \hline
  $0.4$ &  $(  -4.1868040, \, -0.42231316 )$  & $(-3.4840637, \, 0.13308021) $  \\
     \hline
   $0.5$ & $( -4.1344625, \, -0.51308748 )$ &  $ (-3.5333025, \, 0.15158123) $    \\
   \hline
  $0.6$ & $(-4.0734756, \, -0.59462386 )$ &  $  (-3.5905049, \, 0.16086951)$   \\
     \hline
   $0.7$ & $(-4.0054892, \, -0.66567464 )$ & $ (-3.6540287, \, 0.15970405) $ \\
      \hline
   $0.8$ &   $( -3.9323262, \, -0.72528309 )$ &  $(-3.7220540, \, 0.14713539)  $ \\
      \hline
   $0.9$ & $(-3.8559370, \, -0.77280625 )$ &  $(-3.7926332, \, 0.12252826)  $  \\
      \hline
  $1$ &  $(-3.8376110, \,  -0.44877012) R(\theta)$  & $ (-3.8376110, \, 0.44877012)R(\theta)$  \\
 \hline
\end{tabular}}
\caption{\label{table3} The positions of $\bar{q}_{i, j}=\bar{q}_{i}(\frac{j}{10}) \, (i=1,2, \, j=0,1,2, \dots, 10)$ in the path $\mathcal{P}_{test}= \mathcal{P}_{test, \, \theta}$ corresponding to $\theta  \in [0.028 \pi, \, 0.034 \pi]$.}
\end{table}

\item $\mathbf{\theta_0= 0.05 \pi}$ : a test path $\mathcal{P}_{test}= \mathcal{P}_{test, \, \theta}$ is defined for $\mathbf{\theta \in [0.034\pi, \, 0.065 \pi ]}$, where $\mathcal{P}_{test}$ is defined by connecting the adjacent points $\bar{q}_{i}(\frac{j}{10})$ \, (i=1,2,3,4, j=1,2,$\dots$, 10) in Table \ref{table4}, where $\bar{q}_3= -\bar{q}_2$ and $\bar{q}_4= -\bar{q}_1$;
\begin{table}[!htbp]
\centering
\small
\scalebox{0.8}{\begin{tabular}{ |c|c|c|}
 \hline
\multicolumn{3}{|c|}{$\mathbf{\theta_0= 0.05 \pi, \, \, \quad \quad \quad  \theta \in [0.034 \pi , \, 0.065 \pi]}$} \\
\hline
$t$ &  $\bar{q}_1$ &$\bar{q}_2$    \\
  \hline
  $0$ &  $(   -3.1696, \,  0)$ &  $(-2.3003, \, 0)$ \\
  \hline
  $0.1$ & $( -3.1627095, \,  -0.11876142)$  &  $ (-2.3064958, \, 0.032933599)$   \\
 \hline
  $0.2$ & $(  -3.1422468, \,  -0.23520781 )$  &  $(-2.3248765, \, 0.063565376 )$   \\
   \hline
  $0.3$ & $(-3.1088297, \,  -0.34710502 )$  & $( -2.3548308, \, 0.089675244 )$  \\
   \hline
  $0.4$ &  $( -3.0634571, \,  -0.45237514 )$  & $( -2.3953692, \, 0.10920087  ) $  \\
     \hline
   $0.5$ & $( -3.0074674, \,   -0.54916175 )$ &  $ (-2.4451653, \, 0.12030341 ) $    \\
   \hline
  $0.6$ & $(-2.9424861, \,   -0.63588248  )$ &  $  (-2.5026068, \, 0.12142021 )$   \\
     \hline
   $0.7$ & $(-2.8703679, \,  -0.71126796 )$ & $ (-2.5658520, \, 0.11130378 ) $ \\
      \hline
   $0.8$ &   $(-2.7931366, \,   -0.77438823  )$ &  $(-2.6328896, \,  0.089047953 )  $ \\
      \hline
   $0.9$ & $(-2.7129249, \,   -0.82466824 )$ &  $(-2.7015971, \, 0.054103098 )  $  \\
      \hline
  $1$ &  $(-2.7343443, \,  -4.3956085e-01) R(\theta)$  & $ (-2.7343443, \, 0.43956085)R(\theta)$  \\
 \hline
\end{tabular}}
\caption{\label{table4} The positions of $\bar{q}_{i, j}=\bar{q}_{i}(\frac{j}{10}) \, (i=1,2, \, j=0,1,2, \dots, 10)$ in the path $\mathcal{P}_{test}= \mathcal{P}_{test, \, \theta}$ corresponding to $\theta  \in [0.034 \pi, \, 0.065 \pi]$.}
\end{table}

\item $\mathbf{\theta_0= 0.08 \pi}$ : a test path $\mathcal{P}_{test}= \mathcal{P}_{test, \, \theta}$ is defined for $\mathbf{\theta \in [0.065\pi, \, 0.09\pi ]}$, where $\mathcal{P}_{test}$ is defined by connecting the adjacent points $\bar{q}_{i}(\frac{j}{10})$ \, (i=1,2,3,4, j=1,2,$\dots$, 10) in Table \ref{table5}, where $\bar{q}_3= -\bar{q}_2$ and $\bar{q}_4= -\bar{q}_1$;
\begin{table}[!htbp]
\centering
\small
\scalebox{0.8}{\begin{tabular}{ |c|c|c|}
 \hline
\multicolumn{3}{|c|}{$\mathbf{\theta_0= 0.08 \pi, \, \, \quad \quad \quad  \theta \in [0.065 \pi , \, 0.09 \pi]}$} \\
\hline
$t$ &  $\bar{q}_1$ &$\bar{q}_2$    \\
  \hline
  $0$ &  $( -2.4188, \, 0)$ &  $(-1.5888, \, 0)$ \\
  \hline
  $0.1$ & $(  -2.4110407, \, -0.12842593 )$  &  $ (-1.5952285, \, 0.028061987 )$   \\
 \hline
  $0.2$ & $( -2.3880438, \, -0.25409459  )$  &  $(-1.6142407, \,  0.053403695  )$   \\
   \hline
  $0.3$ & $(-2.3506306, \,  -0.37438071 )$  & $( -1.6450374, \, 0.073439868 )$  \\
   \hline
  $0.4$ &  $( -2.3001048, \,  -0.48690627 )$  & $( -1.6863483, \,   0.085837874 ) $  \\
     \hline
   $0.5$ & $(-2.2381675, \, -0.58962746 )$ &  $ (-1.7365132, \,  0.088605895 ) $    \\
   \hline
  $0.6$ & $(-2.1668220, \, -0.68088937 )$ &  $  (-1.7935727, \,   0.080147754 )$   \\
     \hline
   $0.7$ & $(-2.0882795, \, -0.75945148 )$ & $ (-1.8553595, \,   0.059287837  ) $ \\
      \hline
   $0.8$ &   $(-2.0048754, \,  -0.82449143  )$ &  $(-1.9195791, \,  0.025273911 )  $ \\
      \hline
   $0.9$ & $(-1.9189986, \,  -0.87559567 )$ &  $(-1.9838788, \,   -0.022233220 )  $  \\
      \hline
  $1$ &  $(-2.0024358, \, -0.42821223) R(\theta)$  & $ (-2.0024358, \,  0.42821223 )R(\theta)$  \\
 \hline
\end{tabular}}
\caption{\label{table5} The positions of $\bar{q}_{i, j}=\bar{q}_{i}(\frac{j}{10}) \, (i=1,2, \, j=0,1,2, \dots, 10)$ in the path $\mathcal{P}_{test}= \mathcal{P}_{test, \, \theta}$ corresponding to $\theta  \in [0.065 \pi, \, 0.09\pi]$.}
\end{table}

\item $\mathbf{\theta_0= 0.105 \pi}$ : a test path $\mathcal{P}_{test}= \mathcal{P}_{test, \, \theta}$ is defined for $\mathbf{\theta \in [0.09\pi, \, 0.115\pi ]}$, where $\mathcal{P}_{test}$ is defined by connecting the adjacent points $\bar{q}_{i}(\frac{j}{10})$ \, (i=1,2,3,4, j=1,2,$\dots$, 10) in Table \ref{table6}, where $\bar{q}_3= -\bar{q}_2$ and $\bar{q}_4= -\bar{q}_1$;
\begin{table}[!htbp]
\centering
\small
\scalebox{0.8}{\begin{tabular}{ |c|c|c|}
 \hline
\multicolumn{3}{|c|}{$\mathbf{\theta_0= 0.105 \pi, \, \, \quad \quad \quad  \theta \in [0.09 \pi , \, 0.115 \pi]}$} \\
\hline
$t$ &  $\bar{q}_1$ &$\bar{q}_2$    \\
  \hline
  $0$ &  $(  -2.0714, \, 0)$ &  $(-1.2762, \,  0)$ \\
  \hline
  $0.1$ & $(  -2.0627869, \, -0.13555195 )$  &  $ ( -1.2828633, \, 0.025872842 )$   \\
 \hline
  $0.2$ & $( -2.0373204, \, -0.26785892 )$  &  $( -1.3024968, \, 0.048567515 )$   \\
   \hline
  $0.3$ & $(-1.9960769, \, -0.39388341 )$  & $( -1.3340701, \, 0.065120003 )$  \\
   \hline
  $0.4$ &  $( -1.9407255, \,   -0.51096381 )$  & $(-1.3759829, \, 0.072953603  ) $  \\
     \hline
   $0.5$ & $(-1.8733764, \, -0.61692062)$ &  $ (-1.4262070, \,  0.069986974 ) $    \\
   \hline
  $0.6$ & $(-1.7964262, \,  -0.71009881 )$ &  $  (-1.4824331, \, 0.054675903 )$   \\
     \hline
   $0.7$ & $(-1.7124226, \,  -0.78936084 )$ & $ (-1.5421998, \, 0.026004440 ) $ \\
      \hline
   $0.8$ &   $(-1.6239615, \,  -0.85405132 )$ &  $(-1.6029917, \, -0.016552740)  $ \\
      \hline
   $0.9$ & $(-1.5336164, \,   -0.90395216 )$ &  $(-1.6623084, \, -0.073077108  )  $  \\
      \hline
  $1$ &  $(-1.6702821, \,   -0.42090108 ) R(\theta)$  & $ (-1.6702821, \, 0.42090108  )R(\theta)$  \\
 \hline
\end{tabular}}
\caption{\label{table6} The positions of $\bar{q}_{i, j}=\bar{q}_{i}(\frac{j}{10}) \, (i=1,2, \, j=0,1,2, \dots, 10)$ in the path $\mathcal{P}_{test}= \mathcal{P}_{test, \, \theta}$ corresponding to $\theta  \in [0.09 \pi, \, 0.115\pi]$.}
\end{table}

\item $\mathbf{\theta_0= 0.125 \pi}$ : a test path $\mathcal{P}_{test}= \mathcal{P}_{test, \, \theta}$ is defined for $\mathbf{\theta \in [0.115\pi, \, 0.131\pi ]}$, where $\mathcal{P}_{test}$ is defined by connecting the adjacent points $\bar{q}_{i}(\frac{j}{10})$ \, (i=1,2,3,4, j=1,2,$\dots$, 10) in Table \ref{table7}, where $\bar{q}_3= -\bar{q}_2$ and $\bar{q}_4= -\bar{q}_1$;
\begin{table}[!htbp]
\centering
\small
\scalebox{0.8}{\begin{tabular}{ |c|c|c|}
 \hline
\multicolumn{3}{|c|}{$\mathbf{\theta_0= 0.125 \pi, \, \, \quad \quad \quad  \theta \in [0.115 \pi , \, 0.131 \pi]}$} \\
\hline
$t$ &  $\bar{q}_1$ &$\bar{q}_2$    \\
  \hline
  $0$ &  $( -1.8747, \, 0)$ &  $(-1.1084, \,  0)$ \\
  \hline
  $0.1$ & $( -1.8653084, \, -0.14102665  )$  &  $ ( -1.1152970, \, 0.025082411 )$   \\
 \hline
  $0.2$ & $(-1.8376112, \, -0.27831768  )$  &  $( -1.1355368, \,  0.046526278 )$   \\
   \hline
  $0.3$ & $(-1.7929705, \,  -0.40844171 )$  & $( -1.1678315, \,  0.061008716 )$  \\
   \hline
  $0.4$ &  $( -1.7334462, \,  -0.52850073 )$  & $(-1.2102297, \,  0.065759036 ) $  \\
     \hline
   $0.5$ & $(-1.6615594, \,  -0.63624643 )$ &  $ (-1.2603383, \, 0.058677054 ) $    \\
   \hline
  $0.6$ & $(-1.5800686, \,  -0.73009218 )$ &  $  ( -1.3155326, \, 0.038342913 )$   \\
     \hline
   $0.7$ & $(-1.4917988, \, -0.80905691 )$ & $ (-1.3731187, \,  0.0039570466 ) $ \\
      \hline
   $0.8$ &   $(-1.3995301, \, -0.87268087 )$ &  $(-1.4304397, \, -0.044747959 )  $ \\
      \hline
   $0.9$ & $(-1.3059394, \,  -0.92094300)$ &  $(-1.4849323, \, -0.10759299 )  $  \\
      \hline
  $1$ &  $(-1.4863528, \,  -0.41714747  ) R(\theta)$  & $ (-1.4863528, \, 0.41714747)R(\theta)$  \\
 \hline
\end{tabular}}
\caption{\label{table7} The positions of $\bar{q}_{i, j}=\bar{q}_{i}(\frac{j}{10}) \, (i=1,2, \, j=0,1,2, \dots, 10)$ in the path $\mathcal{P}_{test}= \mathcal{P}_{test, \, \theta}$ corresponding to $\theta  \in [0.115 \pi, \, 0.131\pi]$.}
\end{table}

\item $\mathbf{\theta_0= \pi/7}$ : a test path $\mathcal{P}_{test}= \mathcal{P}_{test, \, \theta}$ is defined for $\mathbf{\theta \in [0.131\pi, \, 0.143\pi ]}$, where $\mathcal{P}_{test}$ is defined by connecting the adjacent points $\bar{q}_{i}(\frac{j}{10})$ \, (i=1,2,3,4, j=1,2,$\dots$, 10) in Table \ref{table8}, where $\bar{q}_3= -\bar{q}_2$ and $\bar{q}_4= -\bar{q}_1$;
\begin{table}[!htbp]
\centering
\small
\scalebox{0.8}{\begin{tabular}{ |c|c|c|}
 \hline
\multicolumn{3}{|c|}{$\mathbf{\theta_0= \pi/7, \, \, \quad \quad \quad  \theta \in [0.131 \pi , \, 0.143 \pi]}$} \\
\hline
$t$ &  $\bar{q}_1$ &$\bar{q}_2$    \\
  \hline
  $0$ &  $(  -1.7349, \,  0)$ &  $(-0.9955, \,  0)$ \\
  \hline
  $0.1$ & $( -1.7247204, \,   -0.14586180  )$  &  $ (  -1.0026649, \,    0.025062574 )$   \\
 \hline
  $0.2$ & $(-1.6947882, \,    -0.28744790 )$  &  $(  -1.0235916, \,  0.045977949 )$   \\
   \hline
  $0.3$ & $(-1.6468072, \,   -0.42091637 )$  & $( -1.0566833, \,   0.059050379 )$  \\
   \hline
  $0.4$ &  $( -1.5832849, \,   -0.54315955 )$  & $(-1.0995868, \,  0.061346424 ) $  \\
     \hline
   $0.5$ & $(-1.5071821, \,   -0.65191650 )$ &  $ (-1.1495198, \,  0.050808846 ) $    \\
   \hline
  $0.6$ & $(-1.4216133, \,    -0.74573079  )$ &  $  (-1.2035518, \,   0.026209663 )$   \\
     \hline
   $0.7$ & $(-1.3296454, \,   -0.82382617 )$ & $ (-1.2587950, \,  -0.012980827 ) $ \\
      \hline
   $0.8$ &   $(-1.2341931, \,    -0.88596482 )$ &  $(-1.3125040, \,   -0.066741588 )  $ \\
      \hline
   $0.9$ & $(-1.1379838, \,   -0.93232697 )$ &  $(-1.3621100, \,   -0.13462543 )  $  \\
      \hline
  $1$ &  $(-1.3582328, \,   -0.41523306 ) R(\theta)$  & $ (-1.3582328, \,  0.41523306 )R(\theta)$  \\
 \hline
\end{tabular}}
\caption{\label{table8} The positions of $\bar{q}_{i, j}=\bar{q}_{i}(\frac{j}{10}) \, (i=1,2, \, j=0,1,2, \dots, 10)$ in the path $\mathcal{P}_{test}= \mathcal{P}_{test, \, \theta}$ corresponding to $\theta  \in [0.131 \pi, \, 0.143\pi]$.}
\end{table}

In what follows, we draw eight pictures of the action $\mathcal{A}(\mathcal{P}_{test})$ of the test path $\mathcal{P}_{test}=\mathcal{P}_{test, \theta}$ and the lower bound $g_1(\theta)$ in different intervals of $\theta$.
\begin{figure}[!htbp]
 \begin{center}
\subfigure[ $ 0 < \theta \le 0.008 \pi $ ]{\includegraphics[width=2.37in]{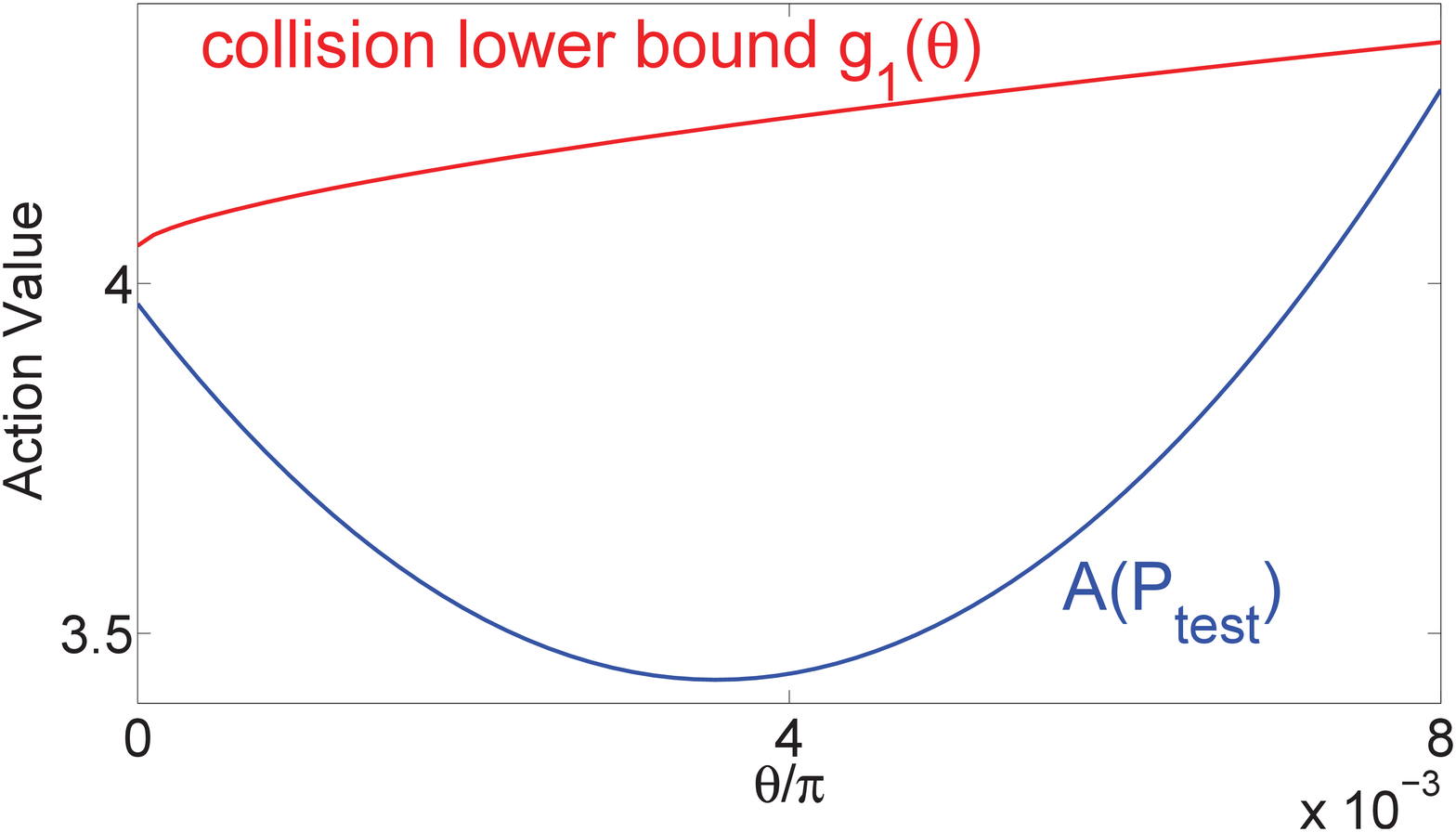}}
\subfigure[ $0.008 \pi \leq \theta \leq 0.028\pi$ ]{\includegraphics[width=2.37in]{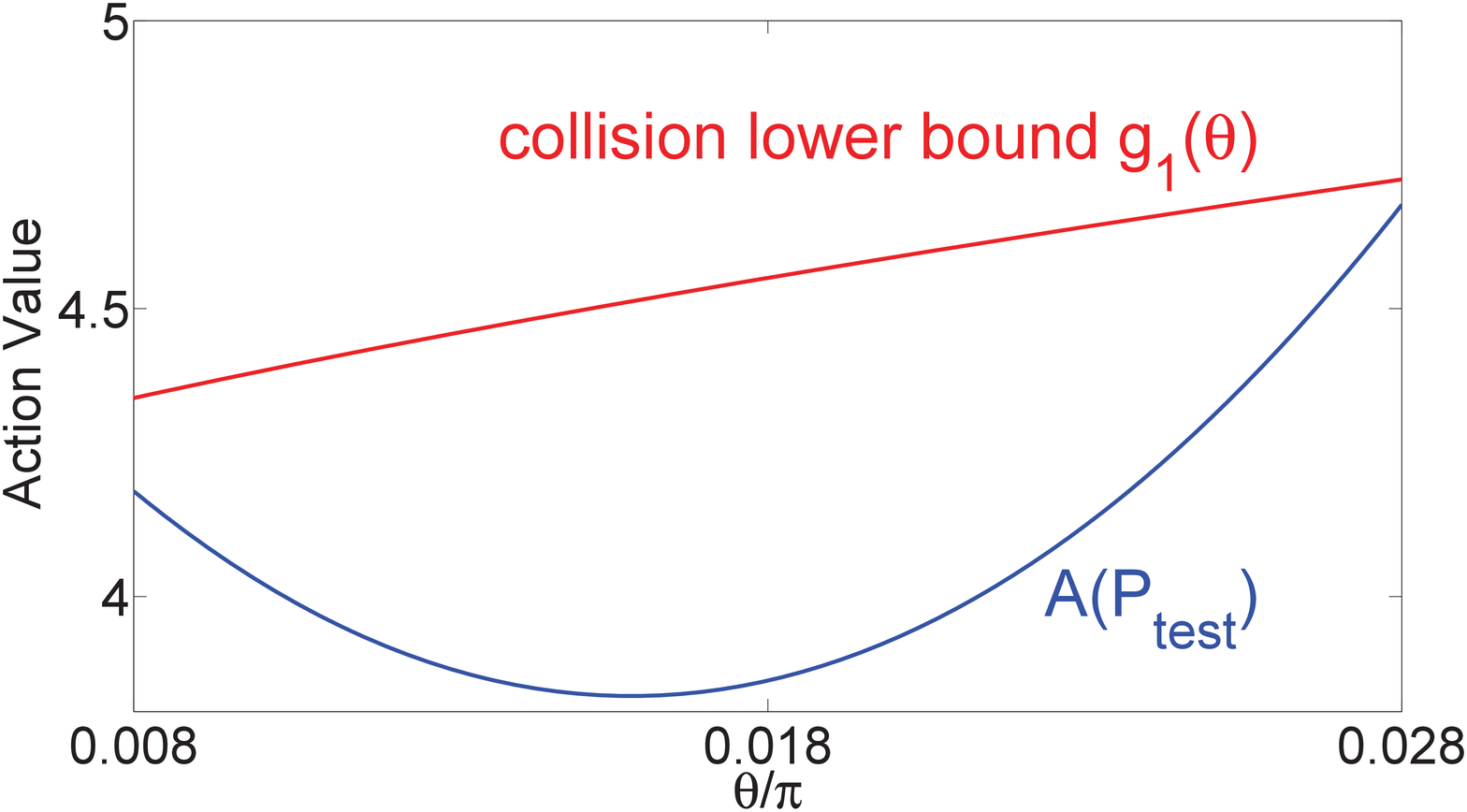}}
\subfigure[ $0.028\pi  \leq \theta \leq  0.034\pi$ ]{\includegraphics[width=2.37in]{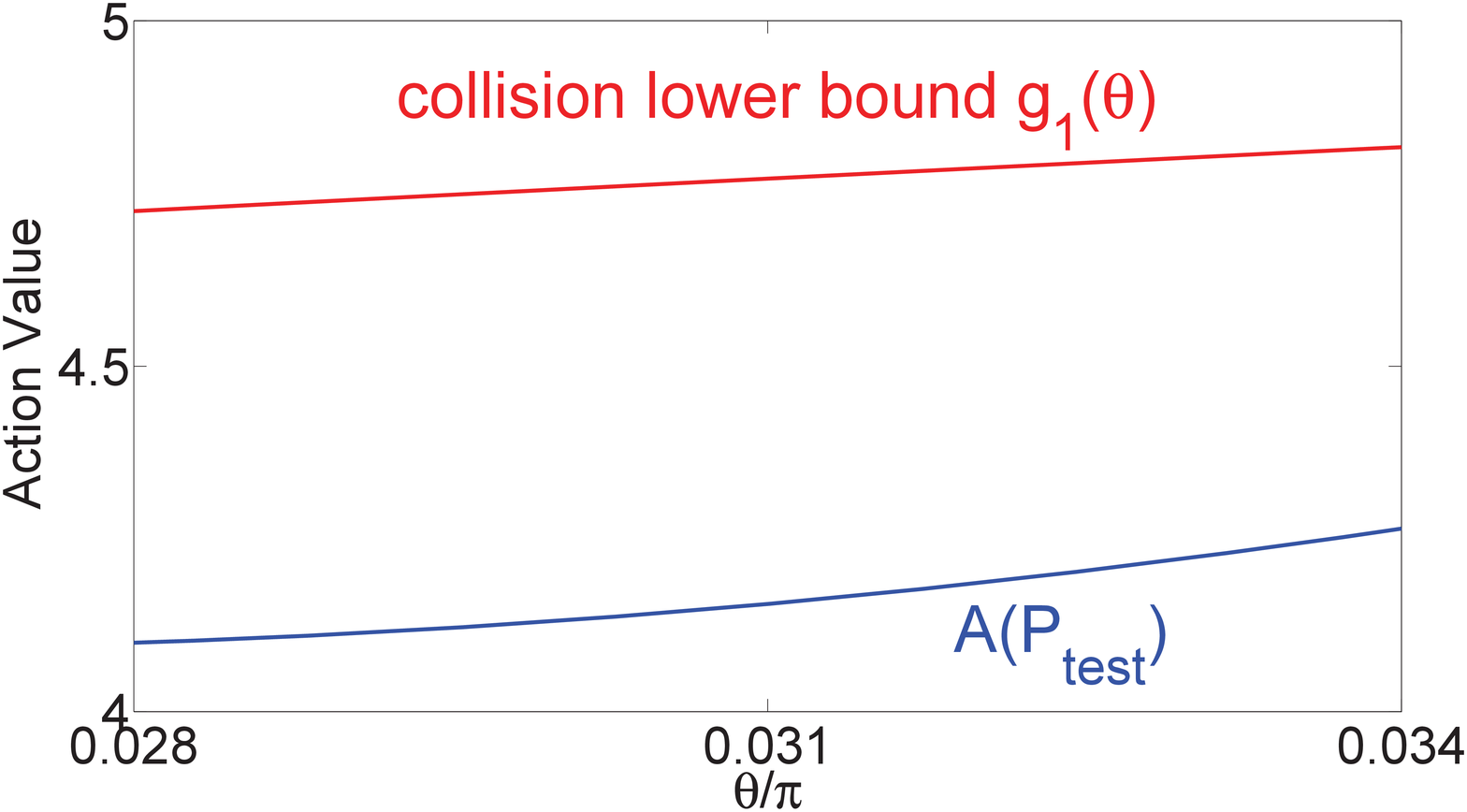}}
\subfigure[ $0.034 \pi  \leq \theta \leq  0.065\pi$ ]{\includegraphics[width=2.37in]{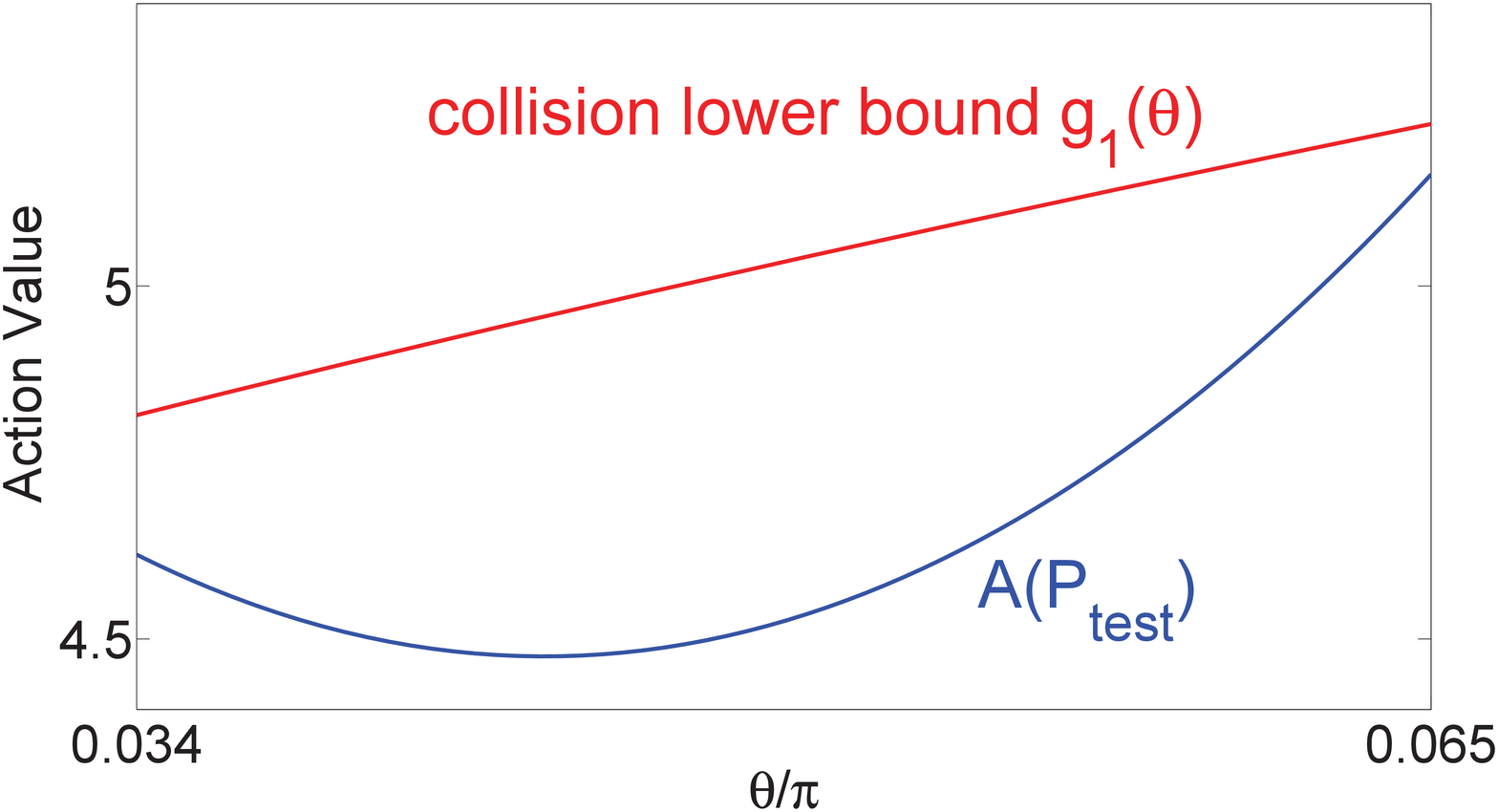}}
\subfigure[ $0.065 \pi  \leq \theta \leq  0.09\pi$ ]{\includegraphics[width=2.37in]{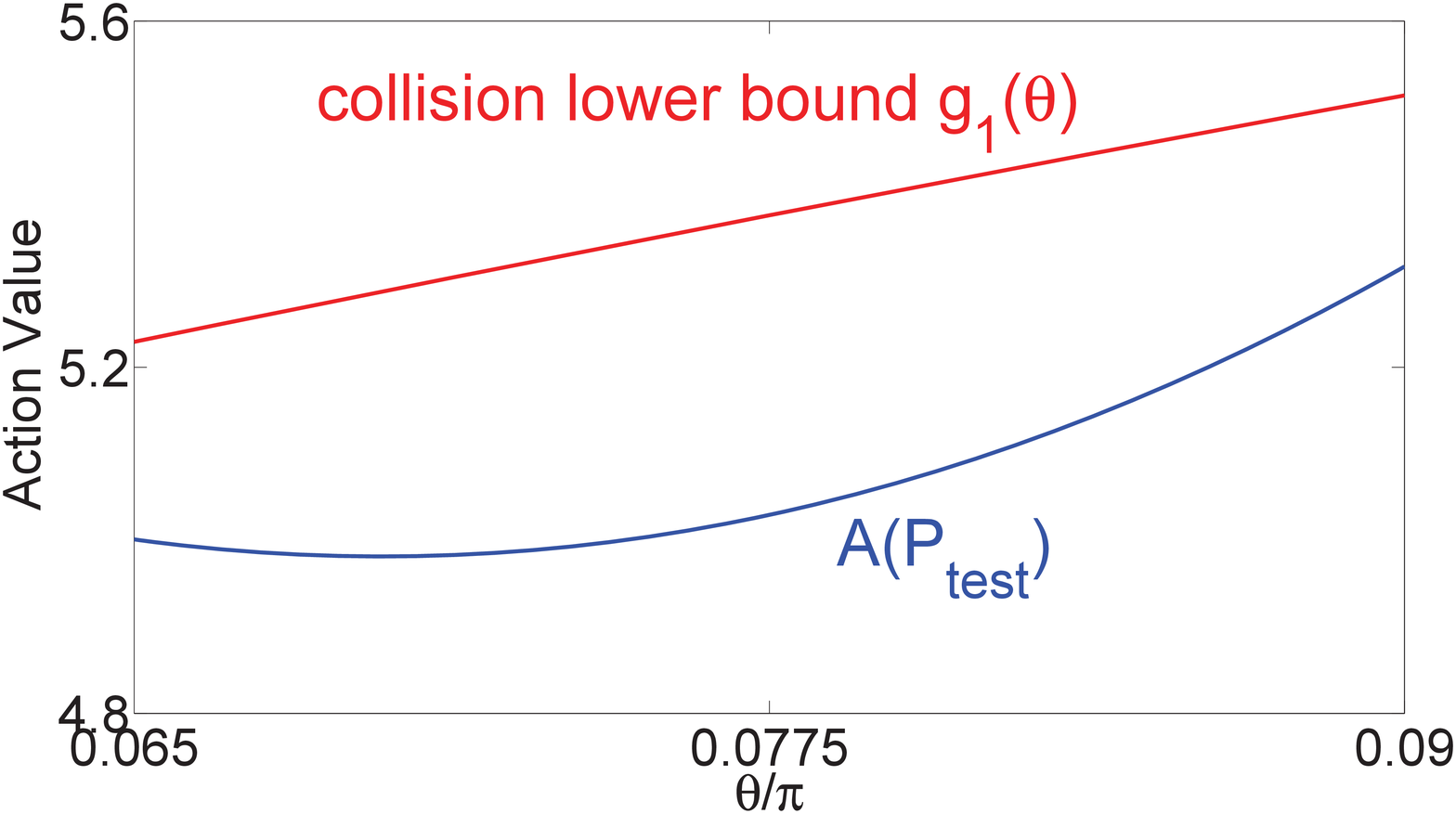}}
\subfigure[ $0.09\pi \leq \theta \leq  0.115\pi$ ]{\includegraphics[width=2.37in]{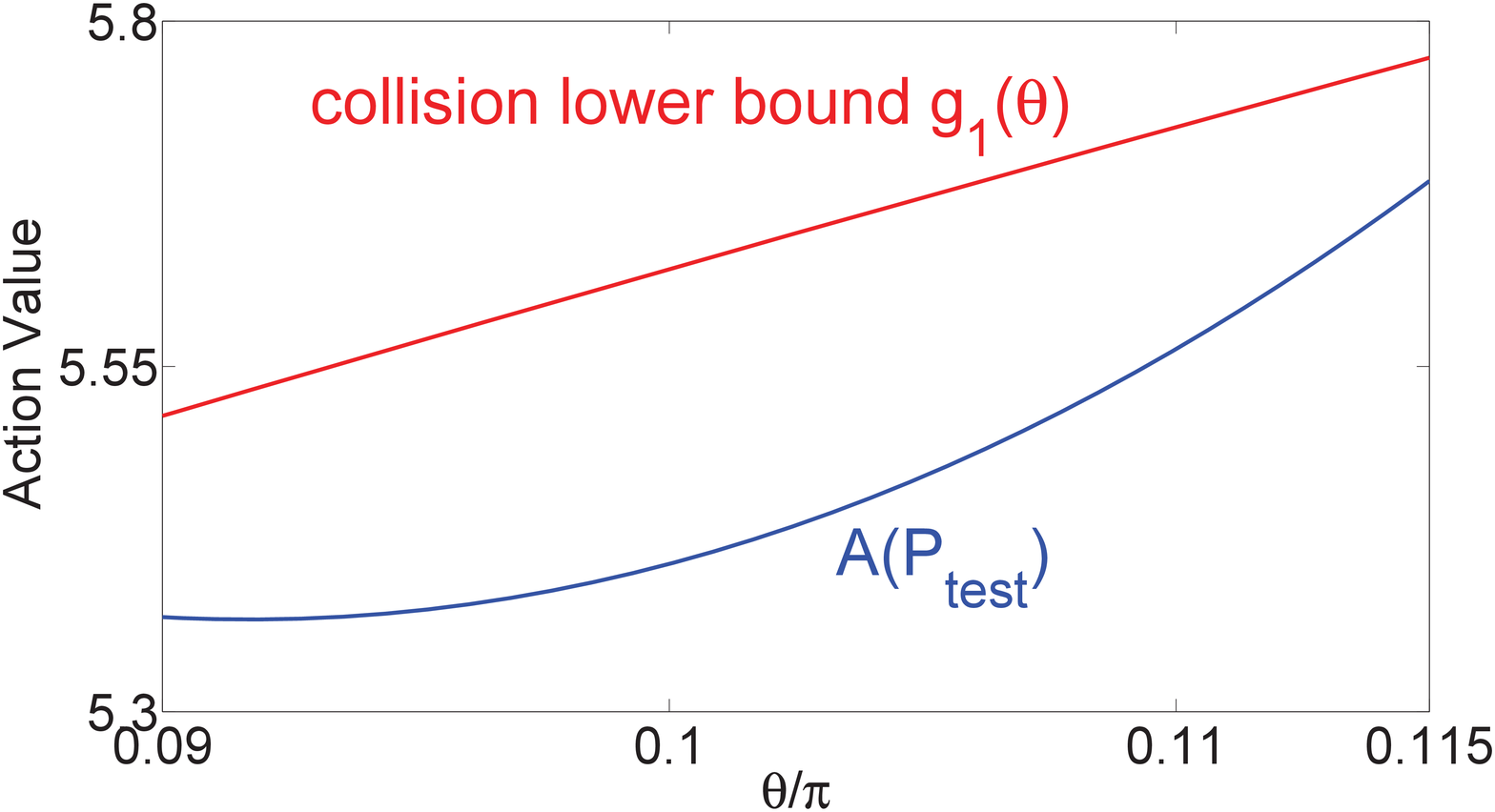}}
\subfigure[ $0.115\pi \leq \theta \leq  0.131\pi$ ]{\includegraphics[width=2.37in]{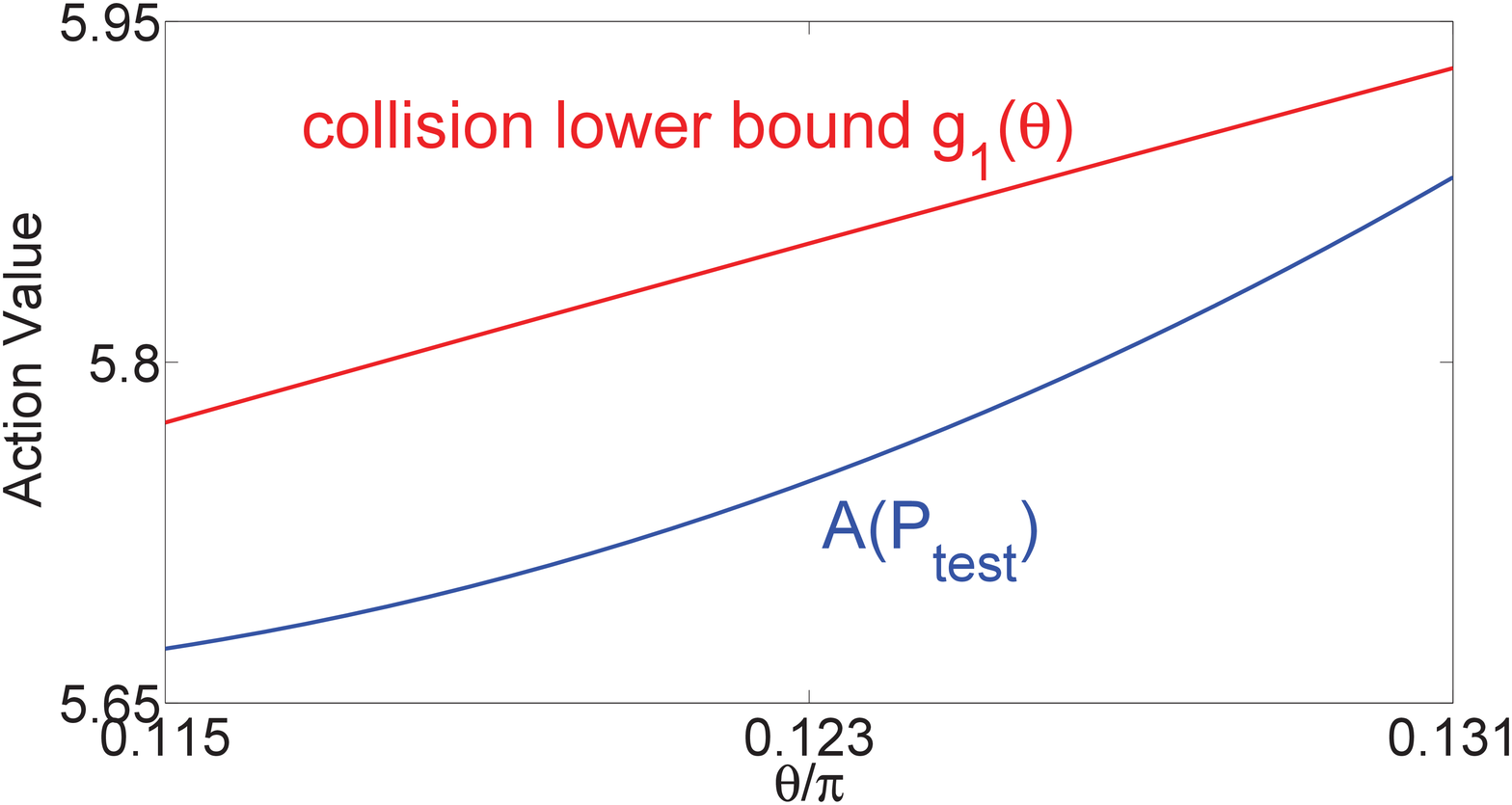}}
\subfigure[ $0.131\pi \leq \theta \leq  0.143\pi$ ]{\includegraphics[width=2.37in]{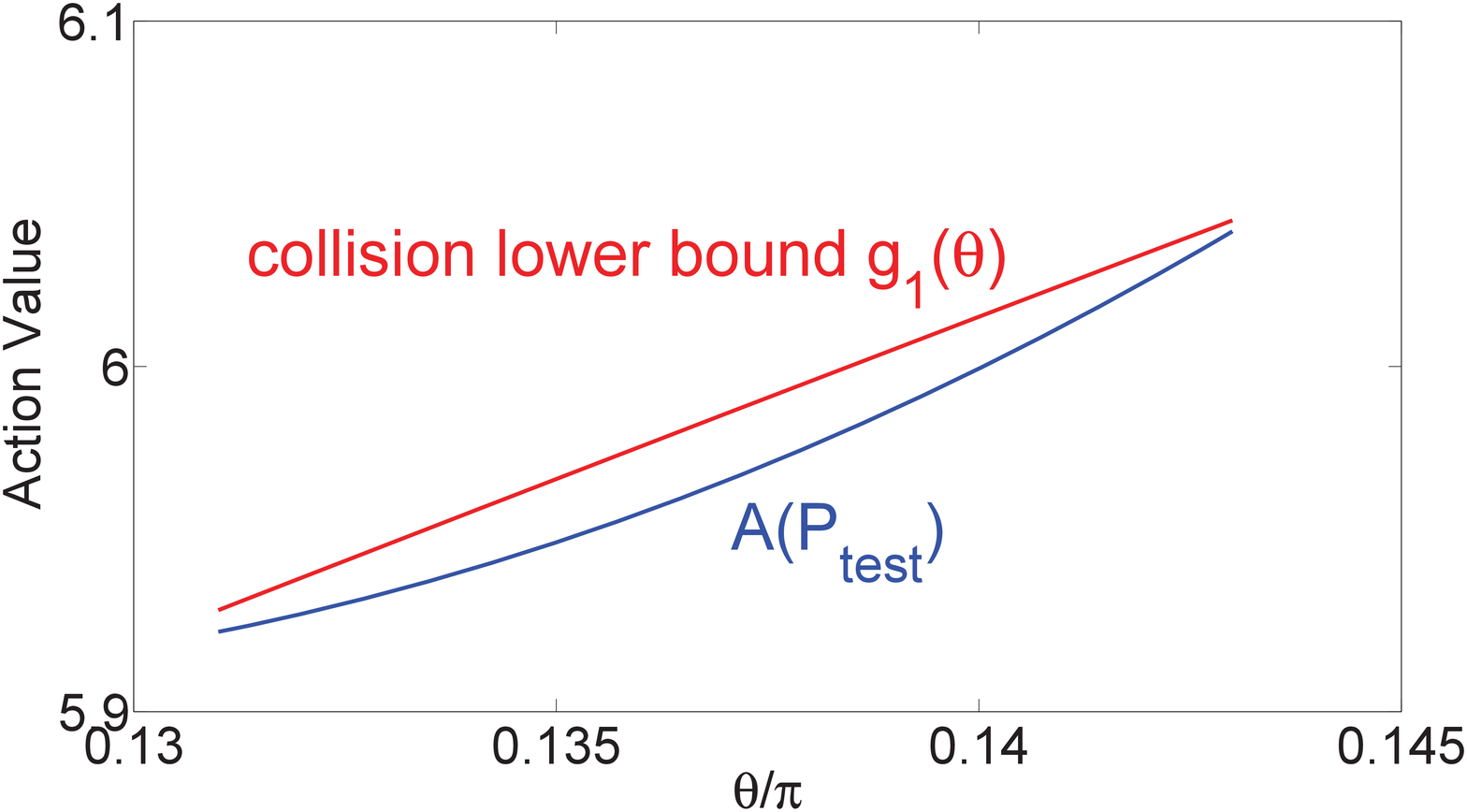}}
 \end{center}
 \caption{\label{fig2} In each subfigure, the horizontal axis is $\theta/\pi$, and the vertical axis is the action value $\mathcal{A}$. The graphs of $g_1(\theta)$ (the lower bound of action of paths with boundary collisions) and the graphs of $\mathcal{A}(\mathcal{P}_{test})$ (action of the test paths) are shown for different intervals of $\theta$. }
  \end{figure}
\end{enumerate}

\section*{Acknowledgement}
We would like to thank Prof. Yiming Long and Prof. Tiancheng Ouyang for valuable discussions on this paper. The Matlab program we use in the paper is created Prof. Tiancheng Ouyang. We thank him for his endless help and support.   

D. Yan is partially supported by NSFC  (No. 11432001) and the China Scholarship Council. Part of this work is done when D. Yan is visiting Brigham Young University. He really appreciates the support from the department of mathematics at BYU.

\section*{Ethical Statement}
We confirm that this manuscript has not been published elsewhere and is not under consideration by another journal. The authors are equally contributed and the work has not been split up into several parts to increase the quantity of submissions and submitted to various journals or to one journal over time. All the data in the work is carefully calculated by the authors and they are not fabricated or manipulated. The authors have approved the manuscript and agreed with its submission to \textbf{Archive for Rational Mechanics and Analysis}.

\end{document}